\newtheorem{theorem}{\textbf{Theorem}}[section]
\newtheorem{lemma}{\textbf{Lemma}}[section]
\newtheorem{proposition}{\textbf{Proposition}}[section]
\newtheorem{corollary}{\textbf{Corollary}}[section]
\newtheorem{remark}{\textbf{Remark}}[section]
\newtheorem{definition}{\textbf{Definition}}[section]
\def\be{\begin{equation}}
\def\ee{\end{equation}}
\def\bea{\begin{eqnarray}}
\def\eea{\end{eqnarray}}
\def\bt{\begin{theorem}}
\def\et{\end{theorem}}
\def\bl{\begin{lemma}}
\def\el{\end{lemma}}
\def\br{\begin{remark}}
\def\er{\end{remark}}
\def\bp{\begin{proposition}}
\def\ep{\end{proposition}}
\def\bc{\begin{corollary}}
\def\ec{\end{corollary}}
\def\bd{\begin{definition}}
\def\ed{\end{definition}}
\def\non{\nonumber }
\def\ub{\mathbf{u}}
\def\btau{\boldsymbol{\tau}}
\def\BH{\mathbf{H}}
\begin{document}

\title{Global Weak Solutions to the Navier--Stokes--Darcy--Boussinesq System for Thermal Convection in Coupled Free and Porous Media Flows}

\author{
 {\sc Xiaoming Wang}
 \footnote{Department of Mathematics, SUSTech International Center for Mathematics and National Center for Applied Mathematics Shenzhen, Southern University of Science and Technology, Shenzhen 518055, China.
 	Email: \texttt{wxm.math@outlook.com}}\ \
 and \ {\sc Hao Wu}
 \footnote{School of Mathematical Sciences and Shanghai Key Laboratory for Contemporary Applied Mathematics, Fudan University, Shanghai 200433, China.
 	Email: \texttt{haowufd@fudan.edu.cn, haowufd@yahoo.com.}
 }
 }

\date{\today}
\maketitle

\begin{abstract}
\noindent
We study the Navier--Stokes--Darcy--Boussinesq system that models the thermal convection of a fluid overlying a saturated porous medium in a general decomposed domain. In both two and three spatial dimensions, we first prove the existence of global weak solutions to the initial boundary value problem subject to the Lions and Beavers--Joseph--Saffman--Jones interface conditions. The proof is based on a proper time-implicit discretization scheme combined with the Leray--Schauder principle and  compactness arguments. Next, we establish a weak-strong uniqueness result such that a weak solution coincides with a strong solution emanating from the same initial data as long as the latter exists.
\medskip

\noindent \textbf{Keywords}:
Coupled free and porous media flow, thermal convection, Navier--Stokes--Darcy--Boussinesq system, global weak solution, existence, weak-strong uniqueness. \smallskip

\noindent \textbf{AMS Subject Classification}: 35D30, 35K61, 76D03, 76D05, 76S05.
\end{abstract}


\section{Introduction}
\setcounter{equation}{0}

The study of the coupling free flow
and porous media flow is of considerable interest and
has attracted a lot of attentions in recent years due to its wide applications in geosciences (e.g., karst aquifers, hyporheic flow, contaminant transport), health sciences (e.g., blood flow) and industrial processes, see \cite{DQ09} and the references therein. In this paper, we investigate a  Navier--Stokes--Darcy--Boussinesq system that models thermal convection in an incompressible viscous fluid overlying a saturated porous medium (see, e.g., \cite{MMW19}). The convection phenomenon under consideration is much more complicated than that in a single fluid (cf. \cite{FMT1987} for the free-flow and \cite{F86,LT99} for fluids in a porous medium), since more physical parameters will affect the heat transport process. Linear and nonlinear stability analysis, properties of bifurcation and dynamic transition for the coupled system, with the Navier--Stokes equations and Darcy's equation governing the free-flow and the porous regions, have been provided in \cite{MMW19,HWW20} under suitable interface conditions. Here, our aim is to perform a first-step analysis on the well-posedness of the associated initial boundary value problem, proving the existence of global weak solutions and their uniqueness property.

Assume that the fluid is confined in a bounded connected domain $\Omega \subset \mathbb{R}^d$ ($d=2,3$) with $C^{2,1}$ boundary $\partial \Omega$. The unit outer normal vector on $\partial \Omega$ is denoted by $\mathbf{n}=\mathbf{n}(x)$. The domain $\Omega$ is partitioned into two non-overlapping regions such that $\overline{\Omega}=\overline{\Omega}_f\cup \overline{\Omega}_m$ and $\Omega_f \cap \Omega_m= \emptyset$, where $\Omega_f$ and $\Omega_m$ represent the free-flow region and the porous matrix region, respectively. We denote by $\partial \Omega_f$ and $\partial \Omega_m$ the boundaries of the free-flow and the matrix part, with $\widehat{\mathbf{n}}_f$, $\widehat{\mathbf{n}}_m$ being the corresponding unit outer normals on them. Both $\partial \Omega_f$ and $\partial \Omega_m$ are assumed to be Lipschitz continuous. The interface between the two parts (i.e., $\partial \Omega_f\cap \partial \Omega_m$) is denoted by $\Gamma_{i}$. On the free-flow/matrix interface $\Gamma_{i}$,  $\mathbf{n}_{i}$ stands for the unit normal on $\Gamma_{i}$ pointing from the free-flow region to the matrix, and $\{\btau_j\}$ $(j=1,...,d-1)$ stands for a local orthonormal basis for the tangent plane to $\Gamma_{i}$. Let $\Gamma_f=\partial \Omega_f\backslash \Gamma_{i}$ and $\Gamma_m=\partial \Omega_m\backslash \Gamma_{i}$ with $\mathbf{n}_f, \mathbf{n}_m$ being the unit outer normals to
$\Gamma_{f}$ and $\Gamma_{m}$. We assume that $\Gamma_m$ and $\Gamma_{i}$ have positive measure (i.e., $|\Gamma_m|>0$, $|\Gamma_{i}|>0$) but allow $\Gamma_f=\emptyset$, namely, $\Omega_f$ can be enclosed completely by $\Omega_m$. When $d=3$, we assume that the surfaces $\Gamma_f$, $\Gamma_m$ and $\Gamma_{i}$ have Lipschitz continuous boundaries.

In the sequel, the subscript $f$  (or $m$) indicates that the variables are for the free-flow part (or the matrix part). We denote by $\ub$ the mean velocity of the (incompressible) fluid and $\theta$ the (relative) temperature of the fluid. The following convention will be assumed throughout the paper
$$
\ub|_{\Omega_f}=\ub_f, \ \ \ \ \ub|_{\Omega_m}=\ub_m, \ \ \ \ \theta|_{\Omega_f}=\theta_f, \ \ \theta|_{\Omega_m}=\theta_m.
$$

\textbf{Governing PDE system}. We shall consider the following Navier--Stokes--Darcy--Boussinesq system (in a nondimensionallized form, see \cite{MMW19})
\begin{align}
&\partial_t \ub_f+(\ub_f\cdot\nabla )\ub_f=\nabla\cdot\mathbb{T}(\ub_f,P_f) + \theta_f\mathbf{k}, &\mbox{in}\ \Omega_f\times(0,T),
\label{uf1}\\
&\nabla\cdot\ub_f=0, &\mbox{in}\ \Omega_f\times(0,T),
\label{uf2}\\
&\partial_t\theta_f+(\ub_f\cdot\nabla)\theta_f={\rm div}(\lambda_f(\theta_f)\nabla\theta_f), &\mbox{in}\ \Omega_f\times(0,T),
\label{tf1}\\
&\varpi\partial_t\ub_m+\nu(\theta_m)\mathbb{K}^{-1}\ub_m
=-\nabla P_m + \theta_m\mathbf{k},
&\mbox{in}\ \Omega_m\times(0,T),
\label{um1}\\
&\nabla\cdot\ub_m=0, &\mbox{in}\ \Omega_m\times(0,T), \label{um2}\\
&\partial_t\theta_m+(\ub_m\cdot\nabla)\theta_m={\rm div}(\lambda_m(\theta_m)\nabla\theta_m), &\mbox{in}\ \Omega_m\times(0,T),
\label{tm1}
\end{align}
where $T\in (0,+\infty)$ is an arbitrary final time and $\mathbf{k}$ is the upward pointing unit vector. In the free-flow region, the motion of the incompressible fluid is characterized by the Navier--Stokes equations \eqref{uf1}--\eqref{uf2} with Boussinesq approximation (buoyancy force), coupled with the advection-diffusion equation \eqref{tf1} for the temperature. While for the fluid in porous medium, we employ the Darcy system \eqref{um1}--\eqref{um2} (valid under the small porosity assumption that is generally applicable to geophysical systems) with the advection-diffusion equation \eqref{tm1}. The Cauchy stress tensor $\mathbb{T}$ in equation  \eqref{uf1}  is given by
\begin{align}
\mathbb{T}(\ub_f,p_f) = 2\nu(\theta_f)\mathbb{D}(\ub_f)-P_f\mathbb{I},
\label{CT}
\end{align}
where $\mathbb{D}(\ub_f)=\frac{1}{2}(\nabla \ub_f + \nabla^T \ub_f)$ is the symmetric rate of deformation tensor and $\mathbb{I}$ denotes the $d\times d$ identity matrix. The scalar functions $P_f$ and $P_m$ stand for the pressures in the free-flow and matrix regions. The fluid viscosity is denoted by $\nu$. The thermal diffusivity coefficients may differ in the free-flow and matrix regions and are denoted by $\lambda_f$, $\lambda_m$, respectively. The viscosity and thermal diffusivity are allowed to be functions that may depend on the temperature $\theta$, which are physically important in the study of non-isothermal fluids (see, e.g., \cite{LB96}). In equation \eqref{um1}, $\mathbb{K}$ is a $d\times d$ matrix standing for the permeability of the porous medium,  which is usually assumed to be a bounded, symmetric and uniformly positive definite matrix but could be heterogeneous \cite{Bear}. The parameter $\varpi$ in \eqref{um1} is a nonnegative constant related to the so-called Darcy--Prandtl number.
Since the Darcy--Prandtl number for porous medium flows is heuristically small in the regime with a small Darcy number \cite{Jo76}, the term  $\varpi\partial_t\ub_m$ is often neglected in the literature (see e.g., \cite{LT99,Va}), while in some other works, this time derivative term is kept primarily for the benefit of the energy analysis, and it would allow more accurate description of temporal transitions \cite{MMW19}. In this paper, we shall treat both cases with or without this time derivative term. For the sake of simplicity, physical coefficients that are not important for our subsequent mathematical analysis are set to be one (for instance, the fluid density and those in the Boussinesq approximation related to the gravitational acceleration and thermal expansion coefficient etc).

Next, we describe the initial, boundary as well as interface conditions of the coupled system \eqref{uf1}--\eqref{CT}.

\textbf{Initial conditions}. The system \eqref{uf1}--\eqref{CT} is subject to the initial conditions
\begin{align}
& \ub_f|_{t=0}=\ub_{0f}(x), \qquad \text{in}\ \Omega_f,\quad \text{and}\qquad \ub_m|_{t=0}=\ub_{0m}(x),  &\text{in}\  \Omega_m,\label{ini1}\\
& \theta|_{t=0}=\theta_{0}(x), &\text{in}\ \Omega. \label{ini2}
\end{align}
 In particular, when $\varpi=0$, we do not need to specify the initial velocity $\ub_{0m}$ as it can be recovered from $\theta_{0m}$ by solving the Darcy equation (see e.g., \cite{LT99}). \medskip

\textbf{Boundary conditions on $\Gamma_f$ and $\Gamma_m$}. Since we are mainly interested in the coupling on the  interface $\Gamma_i$ between sub-domains, we impose the following standard boundary conditions on the outer boundaries $\Gamma_f$ and $\Gamma_m$:
\begin{align}
& \ub_f=\mathbf{0}, &\text{on}\ \Gamma_f\times(0,T),
\label{IBC0}\\
& \ub_m\cdot\mathbf{n}_m=0, &\text{on}\ \Gamma_m\times(0,T),
\label{IBC1}\\
& \theta_f=0, &\text{on}\ \Gamma_f\times(0,T),
\label{IBC2}\\
& \theta_m=0, &\text{on}\ \Gamma_m\times(0,T).
\label{IBC3}
\end{align}

\textbf{Interface conditions on $\Gamma_{i}$}. Now on $\Gamma_i$, we assume that the system \eqref{uf1}--\eqref{CT} are coupled through the following set of interface conditions:
\begin{align}
&\ub_f\cdot\mathbf{n}_{i} = \ub_m\cdot\mathbf{n}_{i},
&\mbox{on}\ \Gamma_{i}\times(0,T),
\label{IBCi5}\\
&-\mathbf{n}_{i}\cdot(\mathbb{T}(\ub_f,P_f){\mathbf{n}_{i}})+\frac12|\ub_f|^2=P_m,
&\mbox{on}\ \Gamma_{i}\times(0,T),
\label{IBCi6}\\
&-\btau_j\cdot(\mathbb{T}(\ub_f,P_f){\mathbf{n}_{i}})
=\frac{\alpha\nu(\theta_m)}{\sqrt{\text{trace}(\mathbb{K})}}\btau_j\cdot\ub_f,\quad j=1,..,d-1,
&\mbox{on}\ \Gamma_{i}\times(0,T),
\label{IBCi7}\\
&\theta_f=\theta_m,&\mbox{on}\ \Gamma_{i}\times(0,T),
\label{IBCi1}\\
& \lambda_f(\theta_f)\frac{\partial \theta_f}{\partial \mathbf{n}_{i}}=\lambda_m(\theta_m)\frac{\partial \theta_m}{\partial \mathbf{n}_{i}},
&\mbox{on}\ \Gamma_{i}\times(0,T).
\label{IBCi2}
\end{align}
The condition \eqref{IBCi5} indicates the continuity in normal velocity that guarantees the conservation of mass, i.e., the exchange of fluid
between the two sub-domains is conservative. The condition \eqref{IBCi6}
represents the balance of the forces normal to the interface taking into consideration of the so-called dynamic pressure $\frac12|\ub_f|^2$. With this quadratic term, condition \eqref{IBCi6} is known as the Lions interface condition in the literature (see e.g., \cite{CR09,DQ09,GR09}). This specific choice gives rise to a dissipative energy law that facilitates the analysis significantly \cite{CR08}. On the other hand, in the absence of this dynamic pressure term, the kinetic energy could increase without external forcing, which is physically unrealistic. Recently in \cite{MMW19}, the authors showed that the nonlinear dynamic pressure term is somewhat small, thus the difference between solutions produced with the Lions interface condition and its linear counterpart is heuristically small as well. More precisely, with a formal asymptotic argument, they showed that the order of the dynamic pressure term is $\mathcal{O}(\mathrm{Da})$ in the small Darcy number limit as $\mathrm{Da}\to 0$ and it begins to affect solutions to the perturbed systems at the scale
$\mathcal{O}(\mathrm{Da}^2)$ (see \cite[Appendix]{MMW19}). 

Next, the interface condition \eqref{IBCi7} is
the so-called Beavers--Joseph--Saffman--Jones (BJSJ) condition (cf. \cite{Jo73, Sa71}), where $\alpha> 0$ is an empirical constant usually determined by the domain geometry and the porous material. We note that the BJSJ condition is a simplified variant of the well-known Beavers--Joseph (BJ) condition (see \cite{B-J}) that addresses how the porous medium affects the conduit flow at the interface:
$$-\btau_j\cdot(2\nu\mathbb
D(\mathbf{u}_f))\mathbf{n}_{i}
=\frac{\alpha_{BJ}\nu}{\sqrt{\text{trace}(\mathbb{K})}}\btau_j\cdot(\ub_f-\ub_m), \ \ \mbox{on}\ \Gamma_{i}, \ j=1,...,d-1.
$$
 This empirical condition essentially says that the
tangential component of the normal stress that the free-flow incurs along the interface is proportional to the jump in the tangential velocity over the interface. To get the BJSJ condition, the term $-\btau_j\cdot \ub_m$ on the right-hand
side is simply dropped from the corresponding BJ condition
(as long as the Darcy number is small). Mathematically rigorous justification of this simplification under appropriate assumptions can be found in \cite{J-M}. At last, the interface conditions \eqref{IBCi1}--\eqref{IBCi2} involving $\theta$ are simply the continuity conditions for the temperature functions and their conormal derivatives across the interface (sometimes also referred to as transmission conditions).

The coupling system \eqref{uf1}--\eqref{CT} contains the Navier--Stokes--Darcy system for an incompressible viscous flow as a subsystem, which has been extensively studied in the literature. We just mention a few results related to the  mathematics analysis and refer readers to the references cited therein. 

For the simplified case of coupled (stationary) Stokes and Darcy equations, analysis of weak solutions has been done in \cite{CGHW10,LSY03,DMQ01}. We also refer to \cite{HWW} for the existence of global weak solutions and weak-strong uniqueness of a Cahn--Hilliard--Stokes--Darcy system for incompressible two-phase fluids. For the coupling of Navier--Stokes and Darcy equations, the stationary case has been studied in \cite{BDQ10,ChR09,DQ09, GR09}, and in \cite{CR12}, a Navier--Stokes/Darcy--transport system modelling the contamination of ground water was analyzed. On the other hand, the time-dependent problem have been investigated in \cite{CR08,CR09,CVR13} with different choices of interface conditions. In those works, existence and uniqueness of weak solutions are obtained under certain specific conditions, for instance, the small data assumption. 

We also note that our system \eqref{uf1}--\eqref{CT} contains the Boussinesq system either for the free-flow \cite{FMT1987,HL05,LPZ11,LT16,LB96,LB99,SZ13,Temam1997,WZ11} or for the flow in a porous medium \cite{F86,LT99}, which have been studied in a vast literature under various settings. In our current case, we have to deal with new difficulties due to the complicated coupling of flows governed by different physical processes, the complex geometry of domain, and in particular, the coupled nonlinear interface conditions.

The aim of this paper is two-fold. First, we prove the existence of global weak solutions to problem \eqref{uf1}--\eqref{IBCi2} with variable fluid viscosity, thermal diffusivity and a nonnegative Darcy--Prandtl number $\varpi$ in both two and three dimensions (see Theorem \ref{thmEx}). Here we choose to work with the Lions and BJSJ interface conditions, which lead to a dissipative energy law for the solutions so that no smallness assumptions on the initial data are necessary (cf. \cite{CR08,CVR13}). The proof is based on a semi-implicit discretization scheme with a Brinkman type regularization that can be solved by the Leray--Schauder principle (cf. \cite{ADG13,HWW} for related applications to some hydrodynamic systems for two-phase flows). Using the energy inequality, we derive uniform estimates of approximate solutions and then obtain the existence of global weak solutions to the original system by a compactness argument. 

Second, we prove the weak-strong uniqueness property of problem \eqref{uf1}--\eqref{IBCi2} (see Theorem \ref{thmuniq}), namely, a weak solution coincides with a strong solution emanating from the same initial data as long as the latter exists.  Uniqueness of weak solutions to problem \eqref{uf1}--\eqref{IBCi2} is not trivial even when the spatial dimension is two due to the nonlinear interface condition of Lions type. Besides the Navier--Stokes equations, additional difficulties come from the thermal advection terms, low regularity of the solution $\ub_m$ to Darcy's equation and variable viscosity/thermal diffusivity coefficients. We remark that our approach can be further applied to study the Cahn--Hilliard--Navier--Stokes--Darcy--Boussinesq system for  thermal convection of two-phase flows in a fluid layer overlying a porous medium (see e.g., \cite{CHWZ20}).

Finally, due to the decomposed domain setting in the B\'enard convection problem \eqref{uf1}--\eqref{IBCi2} and in particular, the complicated coupling via interface conditions, existence of strong or classical solutions (especially with higher-order spatial regularity) remains a challenging open problem, which is quite different from the case in a simple domain (cf. \cite{FMT1987,HL05,LPZ11,LB99,LT99,SZ13} and the references therein). We refer to \cite{LL19} for an attempt on the  Navier--Stokes--Darcy system for isothermal incompressible flows in a two-dimensional strip domain.

The rest of this paper is organized as follows.
In Section 2, we introduce the function spaces,  present the definition of weak solutions and state the main results. Section 3 is devoted to the proof for the existence of global weak solutions to problem \eqref{uf1}--\eqref{IBCi2}. In Section 4, we prove the weak-strong uniqueness property by the energy method.


\section{Main Results}
\setcounter{equation}{0}

\subsection{Preliminaries}
Let $d=2,3$ be the spatial dimension. For arbitrary vectors $\mathbf{a},\mathbf{b}\in \mathbb{R}^d$, we denote $\mathbf{a}\otimes \mathbf{b}=(a_jb_k)^d_{j,k=1}$ and $\mathbf{a}\cdot \mathbf{b}=\sum_{j=1}^d a_jb_j$.
Let $X$ be a Banach space with its dual denoted by $X'$. We denote by
$\langle u, v\rangle \equiv \langle u, v\rangle_{X'
,X} $ the duality product for $u \in X'$, $v\in X$. The inner product on a Hilbert space $H$ is denoted by $(\cdot, \cdot)_H$.
We use $L^q(\Omega)$, $1 \leq q \leq +\infty$ to denote the usual Lebesgue space on $\Omega$ and $\|\cdot\|_{L^q(\Omega)}$ for its norm. Similarly, $W^{m,q}(\Omega)$, $m \in \mathbb{N}$, $1 \leq q \leq + \infty$, denote the usual
Sobolev spaces with norm $\|\cdot \|_{W^{m,p}(\Omega)}$, and for $q=2$, we simply denote $W^{m,2}(\Omega)$ by $H^m(\Omega)$. The fractional order Sobolev spaces $H^s(\Omega)$ ($s\in \mathbb{R}$) are defined as in \cite[Section 4.2.1]{Tri}.  If $I$ is an interval of $\mathbb{R}^+$, we use $L^p(I;X)$, $1 \leq p \leq +\infty$, to denote the function space that consists of $p$-integrable
functions with values in $X$. Moreover, $C_w(I;X)$ denotes the topological space of all bounded and weakly continuous functions from $I$ to $X$, while $W^{1,p}(I;X)$ with $1\leq q<+\infty$ stands for the space of all functions $u$ such that $u, \frac{du}{dt}\in L^p(I;X)$, where $\frac{du}{dt}$ denotes the distributional time derivative of $u$. Bold characters are used to denote vector or matrix valued spaces.

Let $\Omega$ be the domain with decomposition that has been described in the Introduction. For our problem, we introduce the following spaces
\bea
 \BH({\rm div}; \Omega_k) &:=&\{\mathbf{w}\in \mathbf{L}^2(\Omega_k)~|~\nabla \cdot \mathbf{w}\in L^2(\Omega_k)\},\non\\
\widetilde{\mathbf{H}}_{k,0}&:=&\{\mathbf{w}\in \BH({\rm div}; \Omega_k)~|~\mathbf{w}\cdot \mathbf{n}_k=0 \ \text{on}\ \Gamma_{k}\}, \non\\
\widetilde{\mathbf{H}}_{k,\mathrm{div}}
&:=&\{\mathbf{w}\in\widetilde{\mathbf{H}}_{k,0}~|~\nabla \cdot\mathbf{w}=0\},\non\\
\mathbf{H}_{k,0}&:=&\{\mathbf{w}\in \BH^1(\Omega_k)~|~\mathbf{w}=\mathbf{0}\text{ on
}\Gamma_{k}\}, \non\\
\mathbf{H}_{k,\text{div}}&:=&\{
\mathbf{w}\in\mathbf{H}_{k,0}~|~\nabla \cdot\mathbf{w}=0\},\nonumber\\
\widehat{\mathbf{H}}_{k,0}&:=&\{\mathbf{w}\in \BH^1(\Omega_k)~|~\mathbf{w}=\mathbf{0}\text{ on
}\partial\Omega_{k}\}, \non\\
\widehat{\mathbf{H}}_{k,\text{div}}&:=&\{
\mathbf{w}\in\widehat{\mathbf{H}}_{k,0}~|~\nabla \cdot\mathbf{w}=0\},\nonumber
 \eea
 with index $k\in \{f,m\}$.
For simplicity, we denote $(\cdot, \cdot)_f$, $(\cdot, \cdot)_m$ the inner products on the spaces $L^2(\Omega_f)$, $L^2(\Omega_m)$, respectively (also for the corresponding vector or matrix valued spaces). The inner product on $L^2(\Omega)$ is simply denoted by $(\cdot, \cdot)$. For any function $u\in L^2(\Omega)$ with $u_m:=u|_{\Omega_m}$ and $u_f:=u|_{\Omega_f}$, it holds
$$
(u,v)=(u_m,v_m)_m+(u_f,v_f)_f, \quad \|u\|_{L^2(\Omega)}^2=\|u_m\|_{L^2(\Omega_m)}^2+\|u_f\|_{L^2(\Omega_f)}^2.
$$
On the interface $\Gamma_{i}$, we consider the fractional Sobolev spaces
$H^{\frac12}_{00}(\Gamma_{i})$ and $H^\frac12(\Gamma_{i})$ for a (Lipschitz) surface $\Gamma_{i}$ when $d=3$ or a curve when $d=2$, with the following equivalent norms (see \cite[Chapter 1, Section 11]{L-M}, or \cite{Gri85}):
\begin{eqnarray}
&& \|u\|_{H^\frac12(\Gamma_{i})}^2=\int_{\Gamma_{i}} |u|^2 dS +\int_{\Gamma_{i}}\!\int_{\Gamma_{i}}\frac{|u(x)-u(y)|^2}{|x-y|^d} dxdy,\nonumber\\
&& \|u\|_{H_{00}^\frac12(\Gamma_{i})}^2=\|u\|_{H^\frac12(\Gamma_{i})}^2
+\int_{\Gamma_{i}} \frac{|u(x)|^2}{\rho(x, \partial \Gamma_{i})} dx,\nonumber
\end{eqnarray}
where $\rho(x, \partial \Gamma_{i})$ denotes the distance from $x$ to $\partial \Gamma_{i}$. We note that these norms are not equivalent except when $\Gamma_{i}$  is a closed surface or curve. Besides, if $\Gamma_{i}$ is a subset of $\partial \Omega_f$ with positive measure, then $H^{\frac12}_{00}(\Gamma_{i})$ is a trace space of functions of $H^1(\Omega_f)$ that vanish on $\Gamma_{f}$ (see \cite{CVR13}). Similarly  in the vectorial case, we have $\BH^{\frac12}_{00}(\Gamma_{i})=\BH_{f,0}|_{\Gamma_{i}}$.  $H^{\frac12}_{00}(\Gamma_{i})$ is a non-closed subspace of
$H^{\frac12}(\Gamma_{i})$ and it has a continuous zero extension to
$H^{\frac12}(\partial\Omega_{f})$.
Moreover, we have the following continuous embedding result (see \cite{CGHW10}): $H^{\frac12}_{00}(\Gamma_{i})\subsetneqq H^{\frac12}(\Gamma_{i}) \subsetneqq H^{-\frac12}(\Gamma_{i}) \subsetneqq(H^{\frac12}_{00}(\Gamma_{i}))'$.
Let $H^{-\frac12}(\partial\Omega_f)|_{\Gamma_{i}}$ be defined in the
following way: for all $f\in H^{-\frac12}(\partial\Omega_f)|_{\Gamma_{i}}$ and $ g\in
H^{\frac12}_{00}(\Gamma_{i})$,
$\langle f,g\rangle_{H^{-\frac12}(\partial\Omega_f)|_{\Gamma_{i}},\,
H^{\frac12}_{00}(\Gamma_{i})}:=\langle f,\widetilde g\rangle_{H^{-\frac12}(\partial\Omega_f),\,
H^{\frac12}(\partial\Omega_f)}$ with $\widetilde g$ being the zero
extension of $g$ to $\partial\Omega_f$. Then we note that
$H^{-\frac12}(\partial\Omega_f)|_{\Gamma_{i}} \subset(H^{\frac12}_{00}(\Gamma_{i}))'$ but
$H^{-\frac12}(\partial\Omega_f)|_{\Gamma_{i}} \nsubseteq H^{-\frac12}(\Gamma_{i})$.
For any vector $\mathbf{u} \in \mathbf{H}({\rm div}; \Omega_f)$, its normal component  $\mathbf{u} \cdot \mathbf{n}_{i}$ is well defined in $(H^{\frac12}_{00}(\Gamma_{i}))^\prime$,  and for all $q \in H^1(\Omega_f)$ such that $q =0$ on $\Gamma_{f}$, we have
\begin{align}
(\nabla \cdot \mathbf{u}, q)_f=-(\mathbf{u}, \nabla q)_f+\langle \mathbf{u}\cdot \mathbf{n}_{i}, q\rangle_{(H^{\frac12}_{00}(\Gamma_{i}))^\prime,\,  H^{\frac12}_{00}(\Gamma_{i})}.\non
\end{align}
Similar results hold also on the sub-domain $\Omega_m$.

In our decomposed domain setting, the boundary $\Gamma_f=\emptyset$ is allowed, i.e., $\Omega_f$ can be enclosed completely by the matrix part $\Omega_m$. Since the classical Korn's inequality (see, e.g., \cite{Ho}) may not apply when $\Gamma_f=\emptyset$, in order to overcome this difficulty, we introduce the space
\begin{align}
 \mathbf{Z}=\big\{\mathbf{u}\ |\ \ub_f=\ub|_{\Omega_f}\in \mathbf{H}_{f, \mathrm{div}},\ \ub_m=\ub|_{\Omega_m}\in \widetilde{\mathbf{H}}_{m,\mathrm{div}}, \ \ub_f\cdot\mathbf{n}_{i}=\ub_m\cdot\mathbf{n}_{i} \ \text{on}\ \Gamma_{i}\big\},
\label{Z}
\end{align}
whose natural norm can be given by $\|\mathbf{u}_f\|_{\mathbf{H}^1(\Omega_f)}+
\|\mathbf{u}_m\|_{\mathbf{L}^2(\Omega_m)}$.
In view of \cite[Lemma 3.9]{HWW}, we have  the following result:
\begin{lemma}
\label{equinorml}
The norm defined by
\begin{align}\label{equinorm}
\|\mathbf{u}\|_{\mathbf{Z}}^2
:=\|\mathbb{D}(\mathbf{u}_f)\|_{\mathbf{L}^2(\Omega_f)}^2
+\sum_{j=1}^{d-1}\|\mathbf{u}_f \cdot \mathbf{\btau}_j\|_{L^2(\Gamma_{i})}^2+
\|\mathbf{u}_m\|_{\mathbf{L}^2(\Omega_m)}^2
\end{align}
 is an equivalent norm on $\mathbf{Z}$. There exists a constant $C$ independent of $\mathbf{u}$ such that
 $$
 \|\mathbf{u}_f\|_{\mathbf{H}^1(\Omega_f)}^2+
\|\mathbf{u}_m\|_{\mathbf{L}^2(\Omega_m)}^2\leq C\|\mathbf{u}\|_{\mathbf{Z}}^2,\qquad \forall\, \mathbf{u}\in \mathbf{Z}.
 $$
\end{lemma}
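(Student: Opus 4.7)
My plan is to prove the two-sided estimate by establishing the easy upper bound directly and the nontrivial lower bound via a compactness-contradiction argument, in the spirit of \cite[Lemma 3.9]{HWW} but adapted to the present decomposed-domain setting. The upper bound $\|\mathbf{u}\|_{\mathbf{Z}}^2 \leq C(\|\mathbf{u}_f\|_{\mathbf{H}^1(\Omega_f)}^2 + \|\mathbf{u}_m\|_{\mathbf{L}^2(\Omega_m)}^2)$ is immediate from the pointwise bound $|\mathbb{D}(\mathbf{u}_f)| \leq |\nabla \mathbf{u}_f|$ and the standard $H^1$-to-$L^2$ trace inequality on the Lipschitz interface $\Gamma_i \subset \partial \Omega_f$. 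So the explicit second inequality in the statement, combined with this observation, yields the equivalence of the two norms.

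For the lower bound I would argue by contradiction. Assume there is a sequence $\{\mathbf{u}^n\} \subset \mathbf{Z}$ with $\|\mathbf{u}_f^n\|_{\mathbf{H}^1(\Omega_f)}^2 + \|\mathbf{u}_m^n\|_{\mathbf{L}^2(\Omega_m)}^2 = 1$ but $\|\mathbf{u}^n\|_{\mathbf{Z}} \to 0$. Then $\mathbb{D}(\mathbf{u}_f^n)\to \mathbf{0}$ in $\mathbf{L}^2(\Omega_f)$, $\mathbf{u}_f^n\cdot \btau_j\to 0$ in $L^2(\Gamma_i)$ for each $j$, and $\mathbf{u}_m^n\to \mathbf{0}$ in $\mathbf{L}^2(\Omega_m)$. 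Boundedness of $\{\mathbf{u}_f^n\}$ in $\mathbf{H}^1(\Omega_f)$ together with Rellich's theorem gives, up to a subsequence, $\mathbf{u}_f^n\rightharpoonup \mathbf{u}_f^\ast$ weakly in $\mathbf{H}^1(\Omega_f)$, strongly in $\mathbf{L}^2(\Omega_f)$, and with trace converging strongly in $\mathbf{L}^2(\partial \Omega_f)$. Weak lower semicontinuity yields $\mathbb{D}(\mathbf{u}_f^\ast)=0$ in $\Omega_f$, so $\mathbf{u}_f^\ast$ is an infinitesimal rigid displacement $\mathbf{a}+\mathbf{B}x$ with $\mathbf{B}$ skew-symmetric. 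The task is then to force $\mathbf{u}_f^\ast\equiv 0$.

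If $|\Gamma_f|>0$, the condition $\mathbf{u}_f^n=\mathbf{0}$ on $\Gamma_f$ passes to the limit, and since the fixed-point set of a non-trivial field of the form $\mathbf{a}+\mathbf{B}x$ has dimension at most one in $\mathbb{R}^3$ (a line) and at most zero in $\mathbb{R}^2$ (a point), a rigid motion vanishing on a subset of $\partial\Omega_f$ of positive $(d{-}1)$-dimensional measure must be identically zero. In the harder case $\Gamma_f=\emptyset$, I would exploit the interface conditions built into $\mathbf{Z}$: the tangential traces satisfy $\mathbf{u}_f^\ast\cdot\btau_j=0$ on $\Gamma_i$ by the strong $L^2(\Gamma_i)$ convergence, while for the normal component I would use $\mathbf{u}_f^n\cdot\mathbf{n}_i=\mathbf{u}_m^n\cdot\mathbf{n}_i$ together with the fact that, since $\mathrm{div}\,\mathbf{u}_m^n=0$ and $\mathbf{u}_m^n\to \mathbf{0}$ in $\mathbf{L}^2(\Omega_m)$, the normal trace $\mathbf{u}_m^n\cdot\mathbf{n}_i$ tends to zero in $(H^{1/2}_{00}(\Gamma_i))'$. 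Testing against $\mathbf{u}_f^\ast\cdot\mathbf{n}_i\in H^{1/2}_{00}(\Gamma_i)$ yields $\mathbf{u}_f^\ast\cdot\mathbf{n}_i=0$ on $\Gamma_i$, so $\mathbf{u}_f^\ast=0$ on $\Gamma_i$, and again the rigid-motion argument forces $\mathbf{u}_f^\ast\equiv 0$. Finally, the classical second Korn inequality $\|\mathbf{u}_f^n\|_{\mathbf{H}^1(\Omega_f)}^2\leq C(\|\mathbb{D}(\mathbf{u}_f^n)\|_{\mathbf{L}^2(\Omega_f)}^2+\|\mathbf{u}_f^n\|_{\mathbf{L}^2(\Omega_f)}^2)$ with $\mathbf{u}_f^n\to 0$ in $\mathbf{L}^2(\Omega_f)$ gives $\|\mathbf{u}_f^n\|_{\mathbf{H}^1}\to 0$, contradicting the normalization.

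The main obstacle is precisely the case $\Gamma_f=\emptyset$: with no outer Dirichlet data on $\Omega_f$, the rigid motion must be eliminated solely through the interface $\Gamma_i$, which requires pairing the compact tangential $L^2(\Gamma_i)$-trace of $\mathbf{u}_f^n$ with the low-regularity normal trace of the divergence-free field $\mathbf{u}_m^n$ in the duality between $(H^{1/2}_{00}(\Gamma_i))'$ and $H^{1/2}_{00}(\Gamma_i)$. Care is needed to justify that this is the correct duality, namely that $\mathbf{u}_f^\ast\cdot\mathbf{n}_i$ lies in $H^{1/2}_{00}(\Gamma_i)$, which follows from $\mathbf{u}_f^\ast\in\mathbf{H}_{f,0}$ and the trace characterization recalled earlier in this section.
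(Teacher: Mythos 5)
Your argument is correct, and it is essentially the intended one: the paper gives no proof of this lemma, deferring entirely to \cite[Lemma 3.9]{HWW}, and the compactness--contradiction scheme you describe (normalize, extract a weak $\mathbf{H}^1(\Omega_f)$ limit, use weak lower semicontinuity to see the limit is an infinitesimal rigid motion, kill it via the boundary/interface data, then conclude with the second Korn inequality) is exactly how that cited result is established. The only place where you flag a worry --- whether $\mathbf{u}_f^\ast\cdot\mathbf{n}_{i}$ lies in $H^{1/2}_{00}(\Gamma_{i})$, which is genuinely delicate on a merely Lipschitz interface since $\mathbf{n}_{i}$ is only $L^\infty$ --- can be sidestepped: you already know $\mathbf{u}_m^n\cdot\mathbf{n}_{i}\to 0$ in $(H^{1/2}_{00}(\Gamma_{i}))'$ and $\mathbf{u}_f^n\cdot\mathbf{n}_{i}\to\mathbf{u}_f^\ast\cdot\mathbf{n}_{i}$ strongly in $L^2(\Gamma_{i})$ by the compact trace, so pairing with an arbitrary $g\in H^{1/2}_{00}(\Gamma_{i})$ and using the density of $H^{1/2}_{00}(\Gamma_{i})$ in $L^2(\Gamma_{i})$ gives $\mathbf{u}_f^\ast\cdot\mathbf{n}_{i}=0$ a.e.\ without ever testing against $\mathbf{u}_f^\ast\cdot\mathbf{n}_{i}$ itself.
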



\subsection{Main results}\label{WM}
We make the following assumptions on the viscosity $\nu$,   thermal diffusivity $\lambda_f$, $\lambda_m$ as well as the permeability matrix $\mathbb{K}$.
 \begin{itemize}
 \item[(A1)] $\nu\in C^1(\mathbb{R})$,  $\underline{\nu} \leq \nu(s)\leq\bar{\nu}  $ and $|\nu'(s)|\leq \tilde{\nu}$ for $s\in \mathbb{R}$, where $\bar{\nu}$, $\underline{\nu}$ and $\tilde{\nu}$ are positive constants.
 \item[(A2)] ${\lambda}_j\in C^1  (\mathbb{R})$, $\underline{\lambda} \leq {\lambda}_j(s)\leq \bar \lambda $ and $|{\lambda}_j'(s)|\leq \tilde{\lambda}$ for $s\in \mathbb{R}$, where $\bar{\lambda}$, $\underline{\lambda}$ and $\tilde{\lambda}$  are positive constants, $j\in \{f,m\}$.
 \item[(A3)]  The permeability matrix $\mathbb{K}$ is isotropic, bounded from above and below,  namely, $\mathbb{K}=\kappa(x)\mathbb{I}$ with $\mathbb{I}$ being the $d\times d$ identity matrix and $\kappa(x)\in L^\infty (\Omega)$ such that there exist  $\bar{\kappa}>\underline{\kappa}>0$, $\underline{\kappa}\leq \kappa(x)\leq \bar{\kappa}$ a.e. in $\Omega$.
 \end{itemize}

Next, we introduce the notion of weak solutions.

\begin{definition}\label{defweak}
Suppose that $d=2,3$ and $T>0$ is arbitrary.

\textbf{Case 1}: $\varpi>0$. Consider the initial data $\ub_{0f}(x)\in \widetilde{\mathbf{H}}_{f,\mathrm{div}}$, $\ub_{0m}(x)\in \widetilde{\mathbf{H}}_{m,\mathrm{div}}$ with
$\ub_{0f}\cdot\mathbf{n}_{i} = \ub_{0m}\cdot\mathbf{n}_{i}$
on $\Gamma_{i}$, and $\theta_{0}\in L^2(\Omega)$. The triple $(\ub_f, \mathbf{u}_m, \theta)$ satisfying
\bea
&& \ub_f\in C_w([0,T]; \widetilde{\mathbf{H}}_{f,\mathrm{div}})\cap L^2(0, T; \mathbf{H}_{f,\mathrm{div}})\cap W^{1,\frac43}(0,T; (\mathbf{H}_{f,\mathrm{div}})'), \label{regubc}\\
&& \ub_m\in C_w([0, T]; \widetilde{\mathbf{H}}_{m,\mathrm{div}}) \cap L^2(0, T; \widetilde{\mathbf{H}}_{m,\mathrm{div}})\cap H^1(0,T; (\widetilde{\mathbf{H}}_{m,\mathrm{div}})'), \label{regubm}\\
&& \theta\in C_w([0,T]; L^2(\Omega))\cap L^2(0,T; H^1_0(\Omega))\cap H^1(0,T; (W^{1,3}_0(\Omega))'),
\eea
is called a weak solution to problem \eqref{uf1}--\eqref{IBCi2}, if the following conditions are fulfilled:

(1) For any $\mathbf{v}_f\in C^1_0((0,T); \mathbf{H}_{f,\mathrm{div}})$, $\mathbf{v}_m\in C^1_0((0,T); \widetilde{\mathbf{H}}_{m,\mathrm{div}})$ with $\mathbf{v}_f\cdot \mathbf{n}_i=\mathbf{v}_m\cdot \mathbf{n}_i$ on $\Gamma_{i}$, it holds
 \begin{eqnarray}
 &&-\int_0^T(\ub_f,\partial_t\mathbf{v}_f)_f dt
 -\varpi\int_0^T(\ub_m,\partial_t\mathbf{v}_m)_m dt\non\\
 &&
 +\int_0^T \big(\mathrm{div}(\ub_f\otimes \ub_f), \mathbf{v}_f\big)_fdt
 +2\int_0^T\big(\nu(\theta_f)\mathbb{D}(\ub_f),\mathbb{D}(\mathbf{v}_f)\big)_fdt
 \non\\
&& + \int_0^T \left(\nu(\theta_m)\mathbb{K}^{-1}\mathbf{u}_m,\mathbf{v}_m)\right)_m dt
\non\\
&&+\sum_{j=1}^{d-1} \int_0^T\!\int_{\Gamma_{i}}\frac{\alpha \nu(\theta_m)}{\sqrt{{\rm trace} (\mathbb{K})}} (\ub_f\cdot\btau_j)(\mathbf{v}_f\cdot\btau_j)dSdt\non\\
&&- \int_0^T \!\int_{\Gamma_{i}} \frac12|\ub_f|^2 (\mathbf{v}_f\cdot \mathbf{n}_{i}) dSdt  \non\\
&=&  \int_0^T (\theta_f \mathbf{k},  \mathbf{v}_f )_f dt +
\int_0^T (\theta_m \mathbf{k},  \mathbf{v}_m )_m dt.\label{weak1}
\end{eqnarray}

(2) For any $\phi\in C_0^1((0,T); W_0^{1,3}(\Omega))$, it holds
\begin{eqnarray}
&&-\int_0^T(\theta,\partial_t\phi)dt+\int_0^T(\lambda(\theta) \nabla \theta,\nabla\phi)dt=\int_0^T(\theta \ub, \nabla \phi)dt.
\label{weak3}
\end{eqnarray}

(3) $\ub_f|_{t=0}=\ub_{0f}(x)$, $\ub_m|_{t=0}=\ub_{0m}(x)$, $\theta|_{t=0}=\theta_{0}(x)$.\medskip

\textbf{Case 2}: $\varpi=0$. Consider the initial data $\ub_{0f}(x)\in \widetilde{\mathbf{H}}_{f,\mathrm{div}}$, $\theta_{0}\in L^2(\Omega)$. The triple $(\ub_f, \mathbf{u}_m, \theta)$ satisfying
\bea
&& \ub_f\in C_w([0,T]; \widetilde{\mathbf{H}}_{f,\mathrm{div}})\cap L^2(0, T; \mathbf{H}_{f,\mathrm{div}})\cap W^{1,\frac43}(0,T; (\mathbf{H}_{f,\mathrm{div}})'),\label{regubc0}\\
&& \ub_m\in L^2(0, T; \widetilde{\mathbf{H}}_{m,\mathrm{div}}),\label{regubm0}\\
&& \theta\in C_w([0,T]; L^2(\Omega))\cap L^2(0,T; H^1_0(\Omega))\cap W^{1,\frac87}(0,T; (W^{1,4}_0(\Omega))'),
\eea
is called a weak solution to problem \eqref{uf1}--\eqref{IBCi2}, if the following conditions are satisfied:

(1) For any $\mathbf{v}_f\in C^1_0((0,T); \mathbf{H}_{f,\mathrm{div}})$, $\mathbf{v}_m\in C^1_0((0,T); \widetilde{\mathbf{H}}_{m,\mathrm{div}})$ with $\mathbf{v}_f\cdot \mathbf{n}_i=\mathbf{v}_m\cdot \mathbf{n}_i$ on $\Gamma_{i}$, it holds
 \begin{eqnarray}
 &&-\int_0^T(\ub_f,\partial_t\mathbf{v}_f)_f dt
 +\int_0^T \big(\mathrm{div}(\ub_f\otimes \ub_f), \mathbf{v}_f\big)_fdt\non\\
 &&
 +2\int_0^T\big(\nu(\theta_f)\mathbb{D}(\ub_f),\mathbb{D}(\mathbf{v}_f)\big)_fdt
  + \int_0^T \left(\nu(\theta_m)\mathbb{K}^{-1}\mathbf{u}_m,\mathbf{v}_m)\right)_m dt
\non\\
&&+\sum_{j=1}^{d-1} \int_0^T\!\int_{\Gamma_{i}}\frac{\alpha \nu(\theta_m)}{\sqrt{{\rm trace} (\mathbb{K})}} (\ub_f\cdot\btau_j)(\mathbf{v}_f\cdot\btau_j)dSdt\non\\
&&- \int_0^T\! \int_{\Gamma_{i}} \frac12|\ub_f|^2 (\mathbf{v}_f\cdot \mathbf{n}_{i}) dSdt  \non\\
&=& \int_0^T (\theta_f \mathbf{k},  \mathbf{v}_f )_f dt+
\int_0^T (\theta_m \mathbf{k},  \mathbf{v}_m )_m dt.\label{weak10}
\end{eqnarray}

(2) For any $\phi\in C_0^1((0,T); W_0^{1,4}(\Omega))$, it holds
\begin{eqnarray}
&&-\int_0^T(\theta,\partial_t\phi)dt+\int_0^T(\lambda(\theta) \nabla \theta,\nabla\phi)dt=\int_0^T(\theta \ub, \nabla \phi)dt.
\label{weak30}
\end{eqnarray}

(3) $\ub_f|_{t=0}=\ub_{0f}(x)$, $\theta|_{t=0}=\theta_{0}(x)$.
  \end{definition}
 \begin{remark}
We note that the interface/boundary conditions \eqref{IBC0}--\eqref{IBCi2} are enforced as consequences of the weak formulation stated in Definition \ref{defweak}. The equivalence for smooth (or strong) solutions between the weak formulation and the classical form can be verified in a straightforward way using integration by parts. We may refer to \cite{CR08} for detailed computations about the two dimensional Navier--Stokes--Darcy system, and we mention \cite{Zha17} for the weak formulation of second order parabolic transmission problems. Here in \eqref{weak3}, \eqref{weak30} and below, we always use the convention $$\lambda(\theta)|_{\Omega_f}=\lambda_f(\theta_f),\quad \lambda(\theta)|_{\Omega_m}=\lambda_m(\theta_m)$$ for the thermal diffusivity.
\end{remark}
\smallskip
Now we are in a position to state the main results of this paper.

\bt[Existence of global weak solutions]\label{thmEx}
 Suppose that $d=2,3$, $T>0$ being arbitrary and the assumptions (A1)--(A3) are satisfied.
\begin{itemize}
 \item[(i)] If $\varpi>0$, for any $\ub_{0f}\in \widetilde{\mathbf{H}}_{f,\mathrm{div}}$, $\ub_{0m}\in\widetilde{\mathbf{H}}_{m,\mathrm{div}}$ with $\ub_{0f}\cdot\mathbf{n}_{i} = \ub_{0m}\cdot\mathbf{n}_{i}$
on $\Gamma_{i}$, and $\theta_0 \in L^2(\Omega)$, problem \eqref{uf1}--\eqref{IBCi2} admits at least one global weak solution $(\mathbf{u}_f, \ub_m, \theta)$ on $[0,T]$.

 \item[(ii)] If $\varpi=0$, for any $\ub_{0f}\in \widetilde{\mathbf{H}}_{f,\mathrm{div}}$ and $\theta_0 \in L^2(\Omega)$, problem \eqref{uf1}--\eqref{IBCi2} admits at least one global weak solution $(\mathbf{u}_f, \ub_m, \theta)$ on $[0,T]$.
 \end{itemize}
\et
\begin{remark}
The pressure terms $P_f, P_m$ will be understood in the distributional sense and can be constructed, for instance,  as in \cite[Chapter 3, Section 3.5]{Temam1977}. Besides, when $\varpi=0$, the pressure $P_m$ in the matrix part can be also regarded as a weak solution of the Neumann problem
\begin{equation*}
\begin{cases}
-\Delta P_m=\mathrm{div}(\nu(\theta_m)\mathbb{K}^{-1}\ub_m- \theta_m\mathbf{k}),\qquad\quad\ \  \text{in}\ \Omega_m,\\
\partial_{\widehat{\mathbf{n}}_m}P_m=(\nu(\theta_m)\mathbb{K}^{-1}\ub_m- \theta_m\mathbf{k})\cdot \widehat{\mathbf{n}}_m, \qquad \ \text{on}\ \partial\Omega_m.
\end{cases}
\end{equation*}
For any function $h\in H^1(\Omega)$, it holds
\begin{align*}
& \big\langle \mathrm{div}(\nu(\theta_m)\mathbb{K}^{-1}\ub_m- \theta_m\mathbf{k}),\,h\big\rangle_{(H^1(\Omega_m))',H^1(\Omega_m)}\\
&\quad \leq C(\|\ub_m\|_{\mathbf{L}^2(\Omega_m)}+\|\theta_m\|_{L^2(\Omega_m)})\|h\|_{H^1(\Omega_m)}.
\end{align*}
On the other hand, at least for the simple case when  $\nu$, $\kappa$ are positive constants, one can verify that $(\nu \mathbb{K}^{-1}\ub_m- \theta_m\mathbf{k})\cdot \widehat{\mathbf{n}}_m \in L^2(0,T;H^{-\frac12}(\partial \Omega_m))$. Then we can obtain $P_m\in L^2(0,T; H^1(\Omega_m))$.
\end{remark}

Our second result concerns the uniqueness property of solutions to problem \eqref{uf1}--\eqref{IBCi2}. More precisely, we deduce the following weak-strong uniqueness result in both two and three dimensions:

\bt[Weak-strong uniqueness]\label{thmuniq}
Suppose that $d=2,3$, $\varpi\geq 0$ and the assumptions (A1)--(A3) are satisfied.
 Let $(\ub_f,\ub_m,\theta)$, $(\bar{\ub}_f,\bar{\ub}_m,\bar{\theta})$ be two solutions to problem \eqref{uf1}--\eqref{IBCi2} on a certain time interval $[0,T]$, both emanating from the same initial data $\ub_{0f}\in \widetilde{\mathbf{H}}_{f,\mathrm{div}}$, $\varpi \ub_{0m}\in\widetilde{\mathbf{H}}_{m,\mathrm{div}}$ with $\ub_{0f}\cdot\mathbf{n}_{i} = \ub_{0m}\cdot\mathbf{n}_{i}$
on $\Gamma_{i}$ (if $\varpi>0$) and $\theta_0 \in L^2(\Omega)$. In particular, we assume that
$(\ub_f,\ub_m,\theta)$ is a global weak solution obtained in Theorem \ref{thmEx} and $(\bar{\ub}_f,\bar{\ub}_m,\bar{\theta})$ is a strong solution with the following additional regularity
\begin{equation}
\bar{\ub}_f\in L^4(0,T;\mathbf{W}^{1,6}(\Omega_f)),\   \bar{\ub}_m\in L^4(0,T;\mathbf{L}^6(\Omega_m)),\ \bar{\theta}\in L^8(0,T;W^{1,4}(\Omega)).
\label{areg}
\end{equation}
Then it holds  $$(\ub_f,\ub_m,\theta)=(\bar{\ub}_f,\bar{\ub}_m,\bar{\theta}),\quad  \text{on}\ [0,T].$$
\et

\begin{remark}
The additional regularity conditions \eqref{areg} can be weakened for some special cases. For instance, if the fluid viscosity $\nu$ is a positive constant, the condition $\bar{\ub}_f\in L^4(0,T;\mathbf{W}^{1,6}(\Omega_f))$ can be replaced by the classical condition $\bar{\ub}_f\in L^4(0,T;\mathbf{H}^{1}(\Omega_f))$ (see \cite{Be95}), and the condition $\bar{\ub}_m\in L^4(0,T;\mathbf{L}^6(\Omega_m))$ can simply be dropped. Besides, in the two dimensional case, if we assume that $\nu, \lambda_f, \lambda_m$ are all positive constants and $\theta_0\in L^\infty(\Omega)$, then one can easily check that the conclusion on uniqueness holds under just one additional regularity condition $\bar{\ub}_f\in L^\eta(0,T;\mathbf{H}^{1}(\Omega_f))$, for any $\eta>2$. Different from the case for a single homogeneous incompressible fluid in a simple domain (i.e., the classical uniqueness result for weak solutions of the Navier--Stokes equations in 2D), this additional requirement with $\eta>2$ is essentially due to the nonlinear Lions interface condition \eqref{IBCi6}.
\end{remark}


\section{Existence of Global Weak Solutions} \setcounter{equation}{0}

In this section, we prove Theorem \ref{thmEx} on the existence of global weak solutions to problem \eqref{uf1}--\eqref{IBCi2}. The proof will be given in the three dimensional case and the two dimensional case can be easily treated with minor modifications.

First, we recall an important feature of problem \eqref{uf1}--\eqref{IBCi2},  that is, it obeys a basic energy law which can lead to certain nonlinear stability of the system under suitable assumptions (see e.g., \cite{MMW19}). Denote the total energy of the coupled system by
\be
\mathcal{E}_\sigma(t)=\int_{\Omega_f}\frac{1}{2}|\ub_f|^2 dx
+ \int_{\Omega_m} \frac{\varpi}{2}|\ub_m|^2 dx
+ \int_{\Omega}\frac{\sigma}{2}|\theta|^2dx,
\label{totenergy}
\ee
for $\varpi\geq 0$ and some $\sigma>0$, where $\sigma$ is an arbitrary positive constant and it will be chosen in a suitable way below. By a similar calculation like in \cite[Section 4]{MMW19}, we have the following formal result:
\begin{lemma}[Basic energy law]\label{BEL}
Let $(\ub_m, \ub_f, \theta)$ be a smooth solution to the initial boundary value problem \eqref{uf1}--\eqref{IBCi2} on $[0,T]$. Then $(\ub_m,  \ub_f, \theta)$ satisfies the following energy equality:
\be
\frac{d}{dt} \mathcal{E}_\sigma (t)+\mathcal{D}_\sigma (t)= \mathcal{R}(t),\quad \forall\, t\in (0,T),
\label{EnergyLaw}
\ee
 where $\mathcal{E}_\sigma(t)$ is given by \eqref{totenergy} and
\bea
\mathcal{D}_\sigma (t)
&=& \int_{\Omega_f}2\nu(\theta_f)|\mathbb{D}(\ub_f)|^2dx
+\int_{\Omega_m} \nu(\theta_m)\mathbb{K}^{-1}|\ub_m|^2dx
\non\\
&&
+ \sum_{j=1}^{d-1}\int_{\Gamma_{i}} \frac{\alpha\nu(\theta_m)}{\sqrt{{\rm trace}(\mathbb{K})}}
|\ub_f\cdot\btau_j|^2 dS
+ \sigma \int_{\Omega}\lambda(\theta)|\nabla\theta|^2dx,\label{D}\\
\mathcal{R}(t)&=&\int_{\Omega_f}  (\ub_f\cdot \mathbf{k})\theta_f dx+\int_{\Omega_m}  (\ub_m\cdot \mathbf{k})\theta_m dx.
\label{R}
\eea
\end{lemma}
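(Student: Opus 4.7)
The plan is the standard one for coupled systems of this type: multiply each evolution equation by its natural energy density, integrate by parts over each sub-domain, and then organize the remaining boundary integrals on $\Gamma_{i}$ so that the interface conditions \eqref{IBCi5}--\eqref{IBCi2} produce the desired cancellations. Since smooth solutions are assumed, no regularization or density argument is needed.

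First I would multiply \eqref{uf1} by $\ub_f$ and integrate over $\Omega_f$. The time derivative gives $\tfrac{1}{2}\tfrac{d}{dt}\|\ub_f\|_{L^2(\Omega_f)}^2$, while incompressibility together with $\ub_f=\mathbf{0}$ on $\Gamma_f$ turns the convective term into the surface integral $\tfrac12\int_{\Gamma_i}|\ub_f|^2(\ub_f\cdot\mathbf{n}_i)\,dS$. Integration by parts on the stress tensor yields the dissipation $2\int_{\Omega_f}\nu(\theta_f)|\mathbb{D}(\ub_f)|^2\,dx$ plus the boundary contribution $\int_{\Gamma_i}(\mathbb{T}(\ub_f,P_f)\mathbf{n}_i)\cdot\ub_f\,dS$; decomposing $\ub_f$ on $\Gamma_i$ into its normal component and a tangential sum, and substituting the Lions condition \eqref{IBCi6} and the BJSJ condition \eqref{IBCi7}, this last integral rewrites as
\begin{equation*}
-\int_{\Gamma_i}P_m(\ub_f\cdot\mathbf{n}_i)\,dS+\tfrac12\int_{\Gamma_i}|\ub_f|^2(\ub_f\cdot\mathbf{n}_i)\,dS-\sum_{j=1}^{d-1}\int_{\Gamma_i}\frac{\alpha\nu(\theta_m)}{\sqrt{\text{trace}(\mathbb{K})}}|\ub_f\cdot\btau_j|^2\,dS.
\end{equation*}
The crucial point is that the $\tfrac12|\ub_f|^2$ terms coming from the convective derivative and from the dynamic pressure cancel exactly, which is precisely why the Lions condition was designed.

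Next I would test \eqref{um1} with $\ub_m$ and integrate over $\Omega_m$. Using $\nabla\cdot\ub_m=0$ and $\ub_m\cdot\mathbf{n}_m=0$ on $\Gamma_m$, together with the fact that the outer normal of $\Omega_m$ on $\Gamma_i$ equals $-\mathbf{n}_i$, the pressure term reduces to $+\int_{\Gamma_i}P_m(\ub_m\cdot\mathbf{n}_i)\,dS$. Adding the resulting identity to the one from the previous step and invoking the normal-velocity continuity \eqref{IBCi5}, the two $P_m$-boundary integrals cancel, leaving the clean kinetic-energy balance with the viscous, Darcy and BJSJ dissipation terms on the left-hand side and the buoyancy source $\int_{\Omega_f}\theta_f\mathbf{k}\cdot\ub_f\,dx+\int_{\Omega_m}\theta_m\mathbf{k}\cdot\ub_m\,dx$ on the right.

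For the temperature I would multiply \eqref{tf1} by $\sigma\theta_f$ and \eqref{tm1} by $\sigma\theta_m$ and integrate on $\Omega_f,\Omega_m$ respectively. Incompressibility together with $\theta=0$ on $\Gamma_f\cup\Gamma_m$ produces two convective boundary contributions of the form $\pm\tfrac{\sigma}{2}\int_{\Gamma_i}|\theta|^2(\ub\cdot\mathbf{n}_i)\,dS$ which cancel in the sum thanks to \eqref{IBCi5} and \eqref{IBCi1}. Integrating the diffusion terms by parts yields the dissipation $\sigma\int_{\Omega}\lambda(\theta)|\nabla\theta|^2\,dx$ plus the boundary fluxes $\sigma\int_{\Gamma_i}\big(\lambda_f(\theta_f)\partial_{\mathbf{n}_i}\theta_f-\lambda_m(\theta_m)\partial_{\mathbf{n}_i}\theta_m\big)\theta\,dS$, which vanish by the continuity of temperature \eqref{IBCi1} and of the conormal derivative \eqref{IBCi2}. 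Summing this identity with the kinetic-energy balance produces exactly \eqref{EnergyLaw} with $\mathcal{D}_\sigma$ and $\mathcal{R}$ as stated. Since everything is classical under the smoothness assumption, there is no genuine obstacle; the only care needed is in the sign bookkeeping for the normals on $\Gamma_i$ and in identifying the Lions term as the mechanism that eliminates the destabilizing convective boundary contribution.
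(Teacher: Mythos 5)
Your proof is correct and follows exactly the calculation the paper intends: the paper gives no detailed argument for Lemma \ref{BEL}, merely citing the analogous computation in \cite[Section 4]{MMW19}, and your testing of the momentum equations by $\ub_f$, $\ub_m$ and the temperature equations by $\sigma\theta$, with the cancellation of the $\tfrac12|\ub_f|^2(\ub_f\cdot\mathbf{n}_{i})$ terms via the Lions condition \eqref{IBCi6}, of the $P_m$ interface integrals via \eqref{IBCi5}, and of the thermal fluxes via \eqref{IBCi1}--\eqref{IBCi2}, is precisely that computation. The sign bookkeeping for the normals ($\widehat{\mathbf{n}}_m=-\mathbf{n}_{i}$ on $\Gamma_{i}$) is handled correctly, so there is nothing to add.
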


Inspired by \cite{ADG13,HWW}, below we apply a semi-discretization approach to prove Theorem \ref{thmEx}. First, we introduce a discrete in time, continuous in space numerical scheme for a regularized system with an approximation of Brinkman's type in the Darcy equation. The existence of weak solutions to the regularized discrete problem is then proved by using the Leray--Schauder principle. After that, we construct approximate solutions and derive uniform estimates using a discrete version of the basic energy law. Finally, by a two-step compactness argument we show the convergence of approximate solutions to a global weak solution of the original problem \eqref{uf1}--\eqref{IBCi2}.

\subsection{Time discretization of a regularized problem}
Let $\xi\in (0,1)$ be an arbitrary but fixed constant. We consider the following weak formulation of a regularized problem for the original one \eqref{uf1}--\eqref{IBCi2}:
 \begin{eqnarray}
 &&-\int_0^T(\ub_f,\partial_t\mathbf{v}_f)_f dt
 -\varpi\int_0^T(\ub_m,\partial_t\mathbf{v}_m)_m dt\non\\
 &&
 +\int_0^T \big(\mathrm{div}(\ub_f\otimes \ub_f), \mathbf{v}_f\big)_fdt
 +2\int_0^T\big(\nu(\theta_f)\mathbb{D}(\ub_f),\mathbb{D}(\mathbf{v}_f)\big)_fdt
 \non\\
&& + \int_0^T \left(\nu(\theta_m)\mathbb{K}^{-1}\mathbf{u}_m,\mathbf{v}_m)\right)_m dt +\xi \int_0^T \left(\nabla \ub_m, \nabla \mathbf{v}_m\right)_m dt
\non\\
&&+\sum_{j=1}^{d-1} \int_0^T\!\int_{\Gamma_{i}}\frac{\alpha \nu(\theta_m)}{\sqrt{{\rm trace} (\mathbb{K})}} (\ub_f\cdot\btau_j)(\mathbf{v}_f\cdot\btau_j)dSdt\non\\
&&- \int_0^T\! \int_{\Gamma_{i}} \frac12|\ub_f|^2 (\mathbf{v}_f\cdot \mathbf{n}_{i}) dSdt  \non\\
&=& \int_0^T (\theta_f \mathbf{k},  \mathbf{v}_f )_f dt +
\int_0^T (\theta_m \mathbf{k},  \mathbf{v}_m )_m dt,
\label{weak1r}
\end{eqnarray}
for any $\mathbf{v}_f\in C^1_0((0,T); \mathbf{H}_{f,\mathrm{div}})$, $\mathbf{v}_m\in C^1_0((0,T); \mathbf{H}_{m,\mathrm{div}})$ with $\mathbf{v}_f\cdot \mathbf{n}_i=\mathbf{v}_m\cdot \mathbf{n}_i$ on $\Gamma_i$, and
\begin{eqnarray}
&&-\int_0^T(\theta,\partial_t\phi)dt+\int_0^T(\lambda(\theta) \nabla \theta,\nabla\phi)dt=-\int_0^T( \ub\cdot \nabla \theta, \phi)dt,
\label{weak3r}
\end{eqnarray}
for any $\phi\in C_0^1((0,T); H_0^1(\Omega))$. Besides, the following initial conditions are satisfied:
 \begin{align}
 \ub_f|_{t=0}=\ub_{0f}(x),\quad \varpi\ub_m|_{t=0}=\varpi\ub_{0m}(x),\quad \theta|_{t=0}=\theta_{0}(x).
 \label{weakrini}
 \end{align}

We introduce a semi-implicit time discretization scheme for the regularized problem  \eqref{weak1r}--\eqref{weakrini}.
For arbitrary but fixed $T>0$ and a positive integer $N\in \mathbb{N}$, we denote by $\delta=\Delta t=\frac{T}{N}$ the size of time step. Given a triple $(\ub_f^k, \ub_m^k, \theta^k)$, $k=0,1,2,...,N-1$, our aim is to determine $(\ub_f, \ub_m, \theta)=(\ub_f^{k+1}, \ub_m^{k+1}, \theta^{k+1})$ as a solution of the following nonlinear elliptic system
\begin{eqnarray}
&&
\left(\frac{\ub_f^{k+1}-\ub_f^k}{\delta} ,\mathbf{v}_f\right)_f
+\varpi \left(\frac{\ub_m^{k+1}-\ub_m^k}{\delta} ,\mathbf{v}_m\right)_m\non\\
&&+\left(\mathrm{div}(\ub_f^{k+1}\otimes \ub_f^{k+1}), \mathbf{v}_f\right)_f
+2\left(\nu(\theta_f^k)\mathbb{D}(\ub_f^{k+1}),\mathbb{D}(\mathbf{v}_f)\right)_f\non\\
&&+\left(\nu(\theta_m^k)\mathbb{K}^{-1}\mathbf{u}_m^{k+1},\mathbf{v}_m)\right)_m +\xi \left(\nabla \ub_m^{k+1}, \mathbf{v}_m\right)_m \non\\
&& +\sum_{j=1}^{d-1} \int_{\Gamma_{i}}\frac{\alpha\nu(\theta^k_m)}{\sqrt{{\rm trace} (\mathbb{K})}} (\ub_f^{k+1}\cdot\btau_i)(\mathbf{v}_f\cdot\btau_i)dS\non\\
&& -  \int_{\Gamma_{i}} \frac12|\ub_f^{k+1}|^2 (\mathbf{v}_f\cdot \mathbf{n}_{i}) dS  \non\\ \non\\
&=& \big(\theta^{k+1}_f \mathbf{k},  \mathbf{v}_f \big)_f
 +  \big(\theta^{k+1}_m \mathbf{k},  \mathbf{v}_m \big)_m,
 \label{app1}
 \end{eqnarray}
\begin{equation}
\left(\frac{\theta^{k+1}-\theta^k}{\delta},\phi\right)
+
\big(\lambda(\theta^k)\nabla \theta^{k+1},\nabla\phi\big)
=-\big(\ub^{k+1}\cdot \nabla \theta^{k+1},  \phi\big),
   \label{app2a}
\end{equation}
for any $\mathbf{v}_f\in \mathbf{H}_{f,{\rm div}}$, $\mathbf{v}_m\in \mathbf{H}_{m,{\rm div}}$ with $\mathbf{v}_f\cdot \mathbf{n}_i=\mathbf{v}_m\cdot \mathbf{n}_i$ on $\Gamma_i$ and $\phi\in H^1_0(\Omega)$. When $\varpi=0$, we simply take $\mathbf{u}_m^0=\mathbf{0}$. In the above weak formulation for $\theta^{k+1}$, we implicitly use the modified interface condition $$\lambda_f(\theta_f^k)\frac{\partial \theta_f^{k+1}}{\partial \mathbf{n}_{i}}=\lambda_m(\theta_m^k)\frac{\partial \theta_m^{k+1}}{\partial \mathbf{n}_{i}},\quad \text{on}\ \Gamma_i.$$
In the remaining part of this subsection, we will omit the superscript $k+1$ for $\mathbf{u}_f^{k+1}$, $\mathbf{u}_m^{k+1}$, $\theta^{k+1}$ for the sake of simplicity.

The next lemma shows that the solution to problem \eqref{app1}--\eqref{app2a}, if exists, satisfies an discrete energy inequality.
\begin{lemma}[Discrete energy inequality] \label{DEE}
Suppose that $k=0,1,...,N-1$, $\ub_f^k\in \widetilde{\mathbf{H}}_{f,{\rm div}}$, $\ub_m^k\in \widetilde{\mathbf{H}}_{m,{\rm div}}$, $\theta^k\in H^1_0(\Omega)$. Let
$(\ub_f, \ub_m, \theta)\in  \mathbf{H}_{f,{\rm div}} \times \mathbf{H}_{m,{\rm div}} \times  H^1_0(\Omega)$ with  $\mathbf{u}_f\cdot \mathbf{n}_i=\mathbf{u}_m\cdot \mathbf{n}_i$ on $\Gamma_i$ be a solution to the discrete problem \eqref{app1}--\eqref{app2a}. Then
the following energy inequality holds
\begin{eqnarray}\label{DisEnLaw}
&&
  \mathcal{E}_\sigma (\ub_f,\ub_m,\theta)
  + \delta \left(\nu(\theta_f^k)\mathbb{D}(\ub_f), \mathbb{D}(\ub_f)\right)_f
  +\frac12\delta \left(\nu(\theta_m^k)\mathbb{K}^{-1}\ub_m, \ub_m\right)_m
\non\\
&& +\frac12 \delta  \sum_{j=1}^{d-1}\int_{\Gamma_{i}} \frac{\alpha\nu(\theta^k_m) }{\sqrt{{\rm trace} (\mathbb{K})}} |\ub_f\cdot\btau_j|^2 dS
+\frac12 \delta \sigma \int_\Omega \lambda(\theta^k)|\nabla\theta|^2 dx
  \non\\
  &&+\frac{1}{2}\big(\ub_f-\ub_f^k ,\ub_f-\ub_f^k\big)_f +\frac{\varpi}{2}\big(\ub_m-\ub_m^k,\ub_m-\ub_m^k\big)_m\non\\
  && +\frac{1}{2}\sigma\big(\theta-\theta^k,\theta-\theta^k \big)+ \delta \xi \left(\nabla \ub_m,\nabla \ub_m\right)_m\non\\
&\le& \mathcal{E}_\sigma \big(\ub_f^k,\ub_m^k,\theta^k\big),
\label{disBEL}
\end{eqnarray}
where $\mathcal{E}_\sigma$ is defined as in \eqref{totenergy}
with a sufficiently large constant $\sigma$ that is independent of $(\ub_f,\ub_m,\theta)$ and $(\ub_f^k,\ub_m^k,\theta^k)$.
\end{lemma}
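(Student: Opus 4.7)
The plan is to test the discrete momentum system \eqref{app1} with the admissible pair $(\mathbf{v}_f,\mathbf{v}_m)=(\mathbf{u}_f^{k+1},\mathbf{u}_m^{k+1})$, which lies in $\mathbf{H}_{f,\mathrm{div}}\times\mathbf{H}_{m,\mathrm{div}}$ with matching normal traces on $\Gamma_i$, and to test the discrete temperature equation \eqref{app2a} with $\phi=\sigma\theta^{k+1}\in H^1_0(\Omega)$ for a positive constant $\sigma$ to be fixed at the end. For each of the three discrete time-derivative terms I would apply the elementary identity $2a\cdot(a-b)=|a|^2-|b|^2+|a-b|^2$, which immediately produces the differences $\mathcal{E}_\sigma(\mathbf{u}_f,\mathbf{u}_m,\theta)-\mathcal{E}_\sigma(\mathbf{u}_f^k,\mathbf{u}_m^k,\theta^k)$ together with the three ``numerical dissipation'' quadratic terms displayed in \eqref{DisEnLaw}.

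The second step is to handle the two nonlinear convective contributions. For the free-flow term, writing $\mathrm{div}(\mathbf{u}_f^{k+1}\otimes\mathbf{u}_f^{k+1})=(\mathbf{u}_f^{k+1}\cdot\nabla)\mathbf{u}_f^{k+1}$ (since $\mathbf{u}_f^{k+1}$ is divergence-free) and integrating by parts, the only surviving boundary contribution comes from $\Gamma_i$ (recall $\mathbf{u}_f^{k+1}=\mathbf 0$ on $\Gamma_f$), yielding precisely $\frac12\int_{\Gamma_i}|\mathbf{u}_f^{k+1}|^2(\mathbf{u}_f^{k+1}\cdot\mathbf{n}_i)\,dS$, which cancels exactly the Lions dynamic pressure contribution in \eqref{app1}. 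For the temperature equation, I would split $(\mathbf{u}^{k+1}\cdot\nabla\theta^{k+1},\theta^{k+1})$ into integrals over $\Omega_f$ and $\Omega_m$, integrate by parts on each sub-domain, and use $\mathbf{u}_f^{k+1}=\mathbf 0$ on $\Gamma_f$, $\mathbf{u}_m^{k+1}\cdot\mathbf{n}_m=0$ on $\Gamma_m$, together with $\widehat{\mathbf{n}}_m=-\mathbf{n}_i$ and the interface compatibility $\mathbf{u}_f^{k+1}\cdot\mathbf{n}_i=\mathbf{u}_m^{k+1}\cdot\mathbf{n}_i$, $\theta_f^{k+1}=\theta_m^{k+1}$ on $\Gamma_i$ (the latter is automatic since $\theta^{k+1}\in H^1_0(\Omega)$). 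The two $\Gamma_i$-traces cancel, so this convective contribution vanishes.

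After these cancellations I would add the momentum identity and $\sigma$ times the temperature identity, multiply by $\delta$, and obtain
\begin{align*}
&\mathcal{E}_\sigma(\mathbf{u}_f,\mathbf{u}_m,\theta)-\mathcal{E}_\sigma(\mathbf{u}_f^k,\mathbf{u}_m^k,\theta^k)
+\tfrac12\|\mathbf{u}_f-\mathbf{u}_f^k\|^2_{\mathbf{L}^2(\Omega_f)}+\tfrac{\varpi}{2}\|\mathbf{u}_m-\mathbf{u}_m^k\|^2_{\mathbf{L}^2(\Omega_m)}+\tfrac{\sigma}{2}\|\theta-\theta^k\|^2_{L^2(\Omega)}\\
&\quad +2\delta(\nu(\theta_f^k)\mathbb{D}(\mathbf{u}_f),\mathbb{D}(\mathbf{u}_f))_f+\delta(\nu(\theta_m^k)\mathbb{K}^{-1}\mathbf{u}_m,\mathbf{u}_m)_m+\delta\xi\|\nabla\mathbf{u}_m\|^2_{\mathbf{L}^2(\Omega_m)}\\
&\quad +\delta\sum_{j=1}^{d-1}\int_{\Gamma_i}\tfrac{\alpha\nu(\theta_m^k)}{\sqrt{\mathrm{trace}(\mathbb{K})}}|\mathbf{u}_f\cdot\btau_j|^2\,dS
+\delta\sigma(\lambda(\theta^k)\nabla\theta,\nabla\theta)
=\delta(\theta\mathbf{k},\mathbf{u})_{\Omega}.
\end{align*}
The remaining task is to absorb the buoyancy term on the right-hand side, which I expect to be the only subtle step. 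Using Cauchy--Schwarz, the Poincar\'e inequality on $\Omega$ (available because $\theta=0$ on $\Gamma_m$ with $|\Gamma_m|>0$), and the equivalence of norms in Lemma \ref{equinorml}, I would estimate
\[
|(\theta\mathbf{k},\mathbf{u})_{\Omega}|\le C\|\nabla\theta\|_{L^2(\Omega)}\big(\|\mathbb{D}(\mathbf{u}_f)\|_{\mathbf{L}^2(\Omega_f)}+{\textstyle\sum_j}\|\mathbf{u}_f\cdot\btau_j\|_{L^2(\Gamma_i)}+\|\mathbf{u}_m\|_{\mathbf{L}^2(\Omega_m)}\big),
\]
and then apply Young's inequality with four small parameters chosen so that each of $\epsilon_1\|\mathbb{D}(\mathbf{u}_f)\|^2$, $\epsilon_2\sum_j\|\mathbf{u}_f\cdot\btau_j\|^2_{L^2(\Gamma_i)}$, $\epsilon_3\|\mathbf{u}_m\|^2$ can be absorbed into exactly one half of the corresponding physical dissipation (using the lower bounds $\underline{\nu},\underline{\kappa}>0$ from (A1), (A3)). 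The remaining $C_\epsilon\|\nabla\theta\|^2_{L^2(\Omega)}$ is then absorbed into half of $\delta\sigma(\lambda(\theta^k)\nabla\theta,\nabla\theta)$ by choosing $\sigma$ sufficiently large, depending only on $\underline{\nu},\bar{\nu},\underline{\lambda},\underline{\kappa},\bar{\kappa},\alpha$, the Poincar\'e constant, and the constant from Lemma \ref{equinorml}. This yields precisely \eqref{DisEnLaw}. The main obstacle is really the bookkeeping for the Young-inequality constants and verifying that $\sigma$ can be chosen independently of the discrete solutions and of $k$; everything else reduces to the standard cancellation structure already built into the Lions interface condition \eqref{IBCi6}.
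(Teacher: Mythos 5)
Your proposal is correct and follows essentially the same route as the paper: test \eqref{app1} with $(\mathbf{u}_f,\mathbf{u}_m)$ and \eqref{app2a} with $\theta$, use the identity $a\cdot(a-b)=\frac12(|a|^2-|b|^2+|a-b|^2)$, cancel the convective term against the Lions dynamic-pressure term and kill the thermal advection term via the interface/boundary conditions, then absorb the buoyancy term with Young's inequality (small $\epsilon$ into half the physical dissipation, the remainder into $\sigma\delta\int\lambda(\theta^k)|\nabla\theta|^2$ for $\sigma$ large). The only cosmetic difference is that you spell out the integration-by-parts cancellations that the paper states implicitly; the choice of $\epsilon$ and $\sigma$ depending only on the structural constants matches \eqref{epsi1}.
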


\begin{proof}
 Taking $\mathbf{v}_f=\mathbf{u}_f$, $\mathbf{v}_m=\mathbf{u}_m$ in \eqref{app1}, using the elementary identity
\begin{align}
a\cdot(a-b)=\frac{1}{2}\left(|a|^2-|b|^2+|a-b|^2\right), \quad \forall\, a, b\in \mathbb{R} \ \ \text{or}\ \ \mathbb{R}^d,\label{eek1}
\end{align}
  we have
\begin{eqnarray}
&&\frac{1}{2\delta}\left(\ub_f,\mathbf{u}_f\right)_f+ \frac{1}{2\delta}\big(\ub_f-\ub_f^k ,\ub_f-\ub_f^k\big)_f
+
\frac{\varpi}{2\delta}\left(\ub_m,\mathbf{u}_m\right)_m\non\\
&&+ \frac{\varpi}{2\delta}\big(\ub_m-\ub_m^k ,\ub_m-\ub_m^k\big)_m+2\left(\nu(\theta_f^k)\mathbb{D}(\ub_f),\mathbb{D}(\mathbf{u}_f)\right)_f\non\\
&&+\left( \nu(\theta_m^k)\mathbb{K}^{-1}\ub_m, \ub_m\right)_m
 +\sum_{j=1}^{d-1} \int_{\Gamma_{i}}\frac{\alpha \nu(\theta^k_m)}{\sqrt{{\rm trace} (\mathbb{K})}} |\ub_f\cdot\btau_j|^2dS \non\\
 && + \xi\left(\nabla \ub_m,\nabla \ub_m\right)_m\non\\
&=&\frac{1}{2\delta}\big(\ub_f^k,\mathbf{u}_f^k\big)_f
+\frac{\varpi}{2\delta}\big(\ub_m^k,\mathbf{u}_m^k\big)_m  +(\theta_f \mathbf{k},  \mathbf{u}_f )_f
+ (\theta_m \mathbf{k},  \mathbf{u}_m )_m.
\label{eapp1}
 \end{eqnarray}
Next, taking the test function $\phi=\theta$ in \eqref{app2a}, using the boundary and interface conditions,  after integration by parts, we get
\begin{align}
& \frac{1}{2\delta}\left(\theta,\theta \right)
+\frac{1}{2\delta}\big(\theta-\theta^k,\theta-\theta^k \big)  + \int_\Omega \lambda(\theta^k)|\nabla\theta|^2 dx
=\frac{1}{2\delta}\big(\theta^k,\theta^k \big).
\label{eapp2}
\end{align}
Using Lemma \ref{equinorml}, the H\"{o}lder, Young and Poincar\'e inequalities, we estimate the last two terms on the right-hand side of \eqref{eapp1} as follows:
\begin{align}
 (\theta_f \mathbf{k},  \mathbf{u}_f )_f +  (\theta_m \mathbf{k},  \mathbf{u}_m )_m
& \leq \|\theta_f\|_{L^2(\Omega_f)}\|\mathbf{u}_f \|_{\mathbf{L}^2(\Omega_f)}+\|\theta_m\|_{L^2(\Omega_m)}\|\mathbf{u}_m \|_{\mathbf{L}^2(\Omega_m)}\non\\
& \leq C\|\theta_f\|_{L^2(\Omega_f)}\|\mathbf{u} \|_{\mathbf{Z}}+ \|\theta_m\|_{L^2(\Omega_f)}\|\mathbf{u} \|_{\mathbf{Z}}\non\\
&\leq \epsilon \|\mathbf{u} \|_{\mathbf{Z}}^2
+C\epsilon^{-1}\|\theta_f\|_{L^2(\Omega_f)}^2+\epsilon^{-1} \|\theta_m\|_{L^2(\Omega_m)}^2\non\\
&\leq \epsilon \|\mathbf{u} \|_{\mathbf{Z}}^2+ C\epsilon^{-1}\|\theta\|_{L^2(\Omega)}^2\non\\
&\leq \epsilon \|\mathbf{u} \|_{\mathbf{Z}}^2+ C\epsilon^{-1}\|\nabla \theta\|_{\mathbf{L}^2(\Omega)}^2,\non
\end{align}
for any $\epsilon>0$.
%
In view of assumptions (A1)--(A3), we can take $\epsilon$ to be sufficiently small,
for instance,
\begin{align}
\epsilon= \frac{1}{4}\min\left\{\underline{\nu},\, \underline{\nu}\overline{\kappa}^{-1},\,
\alpha\underline{\nu}\overline{\kappa}^{-\frac12}
\right\}.\label{epsi1}
\end{align}
Then multiplying \eqref{eapp2} by a sufficiently large constant $\sigma$ that depends on $\epsilon$ and adding the resultant with \eqref{eapp1}, we obtain the discrete energy inequality \eqref{DisEnLaw}.
\end{proof}

To prove the existence of solutions of the discrete problem \eqref{app1}--\eqref{app2a}, we shall adapt a fixed point  argument involving the Leray--Schauder principle (cf. \cite{ADG13} for a diffuse interface model for the two-phase flow with unmatched densities and \cite{HWW} for the Cahn--Hilliard--Stokes--Darcy system for the two-phase flow in decomposed domains). For this purpose, it will be convenient to reformulate the problem \eqref{app1}--\eqref{app2a} (again dropping the superscript $k+1$ for simplicity) as follows
\begin{eqnarray}
&&
2\left(\nu(\theta_f^k)\mathbb{D}(\ub_f),\mathbb{D}(\mathbf{v}_f)\right)_f
+\left(\nu(\theta_m^k)\mathbb{K}^{-1}\mathbf{u}_m,\mathbf{v}_m)\right)_m
+\xi \left(\nabla \ub_m,\nabla \mathbf{v}_m\right)_m\non\\
&& +\sum_{j=1}^{d-1} \int_{\Gamma_{i}}\frac{\alpha \nu(\theta^k_m)}{\sqrt{{\rm trace} (\mathbb{K})}}(\ub_f\cdot\btau_i)(\mathbf{v}_f\cdot\btau_i)dS\non\\
&=&-\left(\frac{\ub_f-\ub_f^k}{\delta} ,\mathbf{v}_f\right)_f
-\varpi \left(\frac{\ub_m-\ub_m^k}{\delta} ,\mathbf{v}_m\right)_m-\big(\mathrm{div}(\ub_f\otimes \ub_f), \mathbf{v}_f\big)_f \non\\
&&
+  \int_{\Gamma_{i}} \frac12|\ub_f|^2 (\mathbf{v}_f\cdot \mathbf{n}_{i}) dS   +(\theta_f \mathbf{k},  \mathbf{v}_f )_f
+ (\theta_m \mathbf{k},  \mathbf{v}_m )_m,
 \label{wapp1}
 \end{eqnarray}
\begin{equation}
 \big(\lambda(\theta^k)\nabla \theta,\nabla\phi\big)
=-\left(\frac{\theta-\theta^k}{\delta},\phi\right)
- ( \ub \cdot\nabla \theta, \phi).
   \label{wapp2}
\end{equation}

Define the function spaces
\begin{align}
 & \mathbf{V} =\big\{(\mathbf{u}_f,\ub_m)\ |\ \ub_f\in \mathbf{H}_{f, \mathrm{div}},\ \ub_m\in \mathbf{H}_{m,\mathrm{div}}, \ \ub_f\cdot\mathbf{n}_{i}=\ub_m\cdot\mathbf{n}_{i} \ \text{on}\ \Gamma_{i}\big\},
\label{V} \\
& \mathbf{X}=\mathbf{V}\times H_0^1(\Omega),\quad \mathbf{Y}=\mathbf{V}'\times H^{-1}(\Omega).
 \label{XY}
\end{align}
First, we introduce the operator $\mathcal{L}_k: \mathbf{V}\to \mathbf{V}'$ given by
\bea
&&\big \langle \mathcal{L}_k(\ub_f, \ub_m), (\mathbf{v}_f, \mathbf{v}_m)\big \rangle_{\mathbf{V}', \mathbf{V}}\non\\
&=&2\left(\nu(\theta_f^k)\mathbb{D}(\ub_f),\mathbb{D}(\mathbf{v}_f)\right)_f
+\left(\nu(\theta_m^k)\mathbb{K}^{-1}\mathbf{u}_m,\mathbf{v}_m)\right)_m
+\xi \left(\nabla \ub_m,\nabla \mathbf{v}_m\right)_m\non\\
&& +\sum_{j=1}^{d-1}  \int_{\Gamma_{i}}\frac{\alpha\nu(\theta^k_m)}{\sqrt{{\rm trace} (\mathbb{K})}}(\ub_f\cdot\btau_i)(\mathbf{v}_f\cdot\btau_i)dS,
\label{Lk}
\eea
for any $(\ub_f, \ub_m)$, $(\mathbf{v}_f, \mathbf{v}_m) \in \mathbf{V}$.
Using the assumptions (A1), (A3) and Lemma \ref{equinorml}, it is straightforward to verify that $\mathcal{L}_k$ is a strictly monotone, bounded
 and coercive operator on $\mathbf{V}$. Hence, it easily follows from the Lax--Milgram theorem that
\begin{lemma}\label{LLk}
Let the assumptions (A1) and (A3) be satisfied. For any given function  $\theta^k\in H^1_0(\Omega)$, the operator $\mathcal{L}_k: \mathbf{V}\to \mathbf{V}'$ is invertible and its inverse $\mathcal{L}_k^{-1}: \mathbf{V}' \to \mathbf{V}$ is continuous.
\end{lemma}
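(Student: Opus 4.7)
The plan is to recognize $\mathcal{L}_k$ as the linear operator associated with the symmetric bilinear form on $\mathbf{V}$ given by the right-hand side of \eqref{Lk}, and then apply the Lax--Milgram theorem. Equip $\mathbf{V}$ with the natural Hilbert norm $\|\mathbf{u}\|_{\mathbf{V}}^2 := \|\ub_f\|_{\mathbf{H}^1(\Omega_f)}^2 + \|\ub_m\|_{\mathbf{H}^1(\Omega_m)}^2$. There is nothing deep here: everything reduces to checking continuity and coercivity of the bilinear form, both of which follow directly from the structural assumptions.

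\textbf{Boundedness.} Using $\nu(\theta^k_f),\nu(\theta^k_m) \leq \bar{\nu}$ from (A1), the bound $|\mathbb{K}^{-1}| \leq \underline{\kappa}^{-1}$ coming from (A3), Cauchy--Schwarz, and the continuity of the trace $\mathbf{H}^1(\Omega_f) \hookrightarrow \mathbf{L}^2(\Gamma_i)$ applied to the interface term, one obtains
$$\bigl|\langle \mathcal{L}_k(\ub_f,\ub_m),(\mathbf{v}_f,\mathbf{v}_m)\rangle_{\mathbf{V}',\mathbf{V}}\bigr| \leq C\,\|\mathbf{u}\|_{\mathbf{V}}\,\|\mathbf{v}\|_{\mathbf{V}},$$
with $C$ depending only on $\bar{\nu},\underline{\kappa},\alpha,\xi$ and $\Omega$. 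In particular $\mathcal{L}_k\mathbf{u} \in \mathbf{V}'$.

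\textbf{Coercivity.} Using the lower bounds $\nu \geq \underline{\nu}$ from (A1) and $\mathbb{K}^{-1} = \kappa^{-1}\mathbb{I} \geq \bar{\kappa}^{-1}\mathbb{I}$ from (A3), one has
$$\langle \mathcal{L}_k\mathbf{u},\mathbf{u}\rangle_{\mathbf{V}',\mathbf{V}} \geq c_0\Bigl(\|\mathbb{D}(\ub_f)\|_{\mathbf{L}^2(\Omega_f)}^2 + \sum_{j=1}^{d-1}\|\ub_f\cdot\btau_j\|_{L^2(\Gamma_i)}^2 + \|\ub_m\|_{\mathbf{L}^2(\Omega_m)}^2\Bigr) + \xi\,\|\nabla\ub_m\|_{\mathbf{L}^2(\Omega_m)}^2,$$
with $c_0 = \min\{2\underline{\nu},\,\underline{\nu}\bar{\kappa}^{-1},\,\alpha\underline{\nu}/\sqrt{\mathrm{trace}(\mathbb{K})}\}>0$. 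The parenthesized expression is exactly $\|\mathbf{u}\|_{\mathbf{Z}}^2$, so by Lemma \ref{equinorml} it dominates $\|\ub_f\|_{\mathbf{H}^1(\Omega_f)}^2$. For the missing $\mathbf{H}^1$-control on $\ub_m$, invoke the Brinkman regularization $\xi\|\nabla\ub_m\|_{\mathbf{L}^2}^2$ together with the Poincar\'{e} inequality on $\Omega_m$ (available since $\ub_m$ vanishes on $\Gamma_m$ and $|\Gamma_m|>0$), which yields $\|\ub_m\|_{\mathbf{H}^1(\Omega_m)}^2 \leq C\|\nabla\ub_m\|_{\mathbf{L}^2(\Omega_m)}^2$. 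Combining these two estimates gives $\langle \mathcal{L}_k\mathbf{u},\mathbf{u}\rangle \geq c\|\mathbf{u}\|_{\mathbf{V}}^2$ for some $c>0$ depending on $\xi,c_0$ and $\Omega$.

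\textbf{Conclusion.} The Lax--Milgram theorem then yields that $\mathcal{L}_k:\mathbf{V}\to\mathbf{V}'$ is a bijection. Continuity of $\mathcal{L}_k^{-1}$ is immediate from coercivity: for any $F\in\mathbf{V}'$, setting $\mathbf{u} = \mathcal{L}_k^{-1}F$ gives $c\|\mathbf{u}\|_{\mathbf{V}}^2 \leq \langle F,\mathbf{u}\rangle \leq \|F\|_{\mathbf{V}'}\|\mathbf{u}\|_{\mathbf{V}}$, whence $\|\mathcal{L}_k^{-1}F\|_{\mathbf{V}} \leq c^{-1}\|F\|_{\mathbf{V}'}$. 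The only point worth flagging is the essential role of the Brinkman regularization $\xi>0$: without it the bilinear form controls $\ub_m$ only in $\mathbf{L}^2(\Omega_m)$, which is insufficient for coercivity on $\mathbf{V}$; this is precisely why the regularized problem is introduced before passing to the limit $\xi\to 0^+$.
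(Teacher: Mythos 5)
Your proof is correct and follows essentially the same route as the paper, which simply notes that (A1), (A3) and Lemma \ref{equinorml} make $\mathcal{L}_k$ a bounded, coercive (and strictly monotone) bilinear operator on $\mathbf{V}$ and then invokes the Lax--Milgram theorem; you have merely written out the boundedness and coercivity checks that the paper calls straightforward. The only cosmetic remark is that the interface constant in your $c_0$ should be bounded below using $\sqrt{\mathrm{trace}(\mathbb{K})}\leq\sqrt{d\,\bar{\kappa}}$ from (A3) rather than left as a spatially varying quantity, and the Poincar\'e step is optional since the Darcy term already controls $\|\ub_m\|_{\mathbf{L}^2(\Omega_m)}$.
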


Next, we consider the operator induced by the left-hand side of \eqref{wapp2}. Define the operator
$\mathrm{div}_D: \mathbf{L}^2(\Omega)\to H^{-1}(\Omega)$ by
$ \big \langle \mathrm{div}_D \mathbf{v}, \phi\big\rangle_{H^{-1}(\Omega), H_0^1(\Omega)}=-(\mathbf{v}, \nabla \phi)$ for any $\phi \in H_0^1(\Omega)$. Then for $\lambda\in L^\infty(\Omega)$ such that $\lambda(x)\geq \underline{\lambda}>0$ almost everywhere in $\Omega$, we introduce the operator
$\mathrm{div}_D(\lambda(x) \nabla \cdot): H_0^1(\Omega)\to H^{-1}(\Omega)$ given by
$$ \big \langle \mathrm{div}_D( \lambda(x) \nabla \theta), \phi\big\rangle_{H^{-1}(\Omega), H_0^1(\Omega)}=-(\lambda(x) \nabla \theta, \nabla \phi), \quad \forall\, \phi \in H_0^1(\Omega).$$
Again, one can check that the operator $\mathrm{div}_D(\lambda(x) \nabla \cdot)$ is an isomorphism by an easy application of the Lax--Milgram theorem. Then we have
 \begin{lemma}\label{LDk}
  Let the assumption (A2) be satisfied.
  For any given $\theta^k\in H^1_0(\Omega)$, the operator
 \be
 \mathcal{M}_k:=-\mathrm{div}_D\big(\lambda(\theta^k) \nabla \cdot\big): H_0^1(\Omega)\to H^{-1}(\Omega)\label{Dk}
 \ee
  is invertible and its inverse $\mathcal{M}_k^{-1}: H^{-1}(\Omega) \to H_0^1(\Omega)$ is continuous.
\end{lemma}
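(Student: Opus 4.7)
The plan is to obtain the invertibility of $\mathcal{M}_k$ as a direct consequence of the Lax--Milgram theorem applied to the bilinear form
\[
a_k(\theta,\phi) := \big(\lambda(\theta^k)\nabla\theta,\nabla\phi\big),\qquad \theta,\phi\in H_0^1(\Omega),
\]
associated with $\mathcal{M}_k$. The bilinearity is immediate, so the only analytic content is to check boundedness and coercivity, both of which are supplied by assumption (A2).

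First I would observe that since $\theta^k\in H_0^1(\Omega)$ and $\lambda_f,\lambda_m\in C^1(\mathbb{R})$ with values in $[\underline{\lambda},\overline{\lambda}]$, the composed coefficient $\lambda(\theta^k)$ (defined piecewise on $\Omega_f,\Omega_m$ via the convention noted after the definition of a weak solution) is a measurable function on $\Omega$ with $\underline{\lambda}\le\lambda(\theta^k)(x)\le\overline{\lambda}$ almost everywhere. This is precisely the hypothesis needed so that the general operator $\mathrm{div}_D(\lambda(x)\nabla\cdot)$ introduced just above the lemma applies with $\lambda(x)=\lambda(\theta^k)(x)$. Boundedness of $a_k$ then follows from $|a_k(\theta,\phi)|\le\overline{\lambda}\|\nabla\theta\|_{L^2}\|\nabla\phi\|_{L^2}$.

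Next I would establish coercivity: using the lower bound $\lambda(\theta^k)\ge\underline{\lambda}$ and the Poincar\'e inequality on $H_0^1(\Omega)$ (valid because the Dirichlet trace vanishes on the whole of $\partial\Omega$), I get $a_k(\theta,\theta)\ge\underline{\lambda}\|\nabla\theta\|_{L^2}^2\ge c\,\underline{\lambda}\|\theta\|_{H_0^1}^2$ for a Poincar\'e constant $c>0$. The Lax--Milgram theorem then yields, for every $F\in H^{-1}(\Omega)$, a unique $\theta\in H_0^1(\Omega)$ with $a_k(\theta,\phi)=\langle F,\phi\rangle$ for all $\phi\in H_0^1(\Omega)$, which by the very definition of $\mathcal{M}_k$ means $\mathcal{M}_k\theta=F$. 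Hence $\mathcal{M}_k$ is bijective.

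Finally, continuity of $\mathcal{M}_k^{-1}$ is read off from the coercivity estimate: taking $\phi=\theta=\mathcal{M}_k^{-1}F$ gives $c\,\underline{\lambda}\|\theta\|_{H_0^1}^2\le\langle F,\theta\rangle\le\|F\|_{H^{-1}}\|\theta\|_{H_0^1}$, whence $\|\mathcal{M}_k^{-1}F\|_{H_0^1}\le(c\underline{\lambda})^{-1}\|F\|_{H^{-1}}$. There is really no substantive obstacle; the one point that merits a brief remark is that the lower bound $\underline{\lambda}>0$ is crucial (and uniform in $k$), which will matter later when passing to the limit in the time-discretization. Parallel reasoning (already noted in the text for $\mathcal{L}_k$) validates the analogous statement on $\mathbf{V}$, so the pair of lemmas sets up the fixed-point argument for the discrete problem in a symmetric way.
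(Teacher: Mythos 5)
Your proposal is correct and follows exactly the route the paper takes: the paper reduces the lemma to the observation that $\mathrm{div}_D(\lambda(x)\nabla\cdot)$ is an isomorphism for any $\lambda\in L^\infty(\Omega)$ with $\lambda\geq\underline{\lambda}>0$ a.e., by an ``easy application of the Lax--Milgram theorem,'' and your write-up simply makes the boundedness, coercivity (via Poincar\'e), and the resulting bound $\|\mathcal{M}_k^{-1}F\|_{H_0^1}\leq (c\underline{\lambda})^{-1}\|F\|_{H^{-1}}$ explicit. The preliminary check that $\lambda(\theta^k)$ is measurable with values in $[\underline{\lambda},\overline{\lambda}]$ under (A2) is exactly the hypothesis the paper's general statement requires, so nothing is missing.
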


Concerning the terms on the right-hand side of problem \eqref{wapp1}--\eqref{wapp2}, we consider the following operator $\mathcal{J}_k: \mathbf{X}\to \mathbf{V}'$:
 \begin{align}
 & \big\langle \mathcal{J}_k(\mathbf{w}), (\mathbf{v}_f, \mathbf{v}_m) \big\rangle_{\mathbf{V}', \mathbf{V}}\non\\
 &\quad = -\left(\frac{\ub_f-\ub_f^k}{\delta} ,\mathbf{v}_f\right)_f
-\varpi \left(\frac{\ub_m-\ub_m^k}{\delta} ,\mathbf{v}_m\right)_m\non\\
&\qquad -\left(\mathrm{div}(\ub_f\otimes \ub_f), \mathbf{v}_f\right)_f
+  \int_{\Gamma_{i}} \frac12|\ub_f|^2 (\mathbf{v}_f\cdot \mathbf{n}_{i}) dS  \non\\
&\qquad  +  (\theta_f \mathbf{k},  \mathbf{v}_f )_f
+  (\theta_m \mathbf{k},  \mathbf{v}_m )_m, \qquad \forall \, (\mathbf{v}_f, \mathbf{v}_m) \in \mathbf{V}, \label{Jk}
 \end{align}
and the operator $\mathcal{K}_k: \mathbf{X}\to H^{-1}(\Omega)$ given by
\begin{align}
& \big \langle \mathcal{K}_k(\mathbf{w}),\phi\big \rangle_{H^{-1}(\Omega),H^1_0(\Omega)} = -\left(\frac{\theta-\theta^k}{\delta}, \phi\right) - (\ub\cdot \nabla \theta , \phi),\quad \forall\, \phi\in H^1_0(\Omega),
\label{KK}
\end{align}
where we denote $\mathbf{w}=(\ub_f, \ub_m, \theta)$.

Let $\ub_f^k\in \widetilde{\mathbf{H}}_{f,\mathrm{div}}$, $\ub_m^k\in \widetilde{\mathbf{H}}_{m,\mathrm{div}}$ and $\theta^k\in H^1_0(\Omega)$  be given.
Using the above formulations \eqref{Lk}--\eqref{KK}, we now introduce the nonlinear operators $\mathcal{T}_k$, $\mathcal{G}_k: \mathbf{X}\to \mathbf{Y}$ such that
\begin{equation}
\mathcal{T}_k (\mathbf{w}) =
\left(
  \begin{array}{c}
    \mathcal{L}_k(\ub_f,\ub_m)\\
    \mathcal{M}_k (\theta)\\
  \end{array}
\right),   \label{Tk}
\end{equation}
and
\begin{equation}
\mathcal{G}_k (\mathbf{w}) =
\left(
  \begin{array}{c}
   \mathcal{J}_k(\mathbf{w}) \\
   \mathcal{K}_k(\mathbf{w})
  \end{array}
\right),  \label{Gk}               %
\end{equation}
where $\mathbf{w}=(\ub_f, \ub_m, \theta)$.
   As a consequence, denoting the solution to problem \eqref{app1}--\eqref{app2a} by $\mathbf{w}=(\ub_f, \ub_m, \theta)$, we can write \eqref{app1}--\eqref{app2a} into the following abstract form:
\begin{align}
\mathcal{T}_k(\mathbf{w})=\mathcal{G}_k(\mathbf{w}). \label{Abs}
\end{align}
\noindent
Indeed, from the above discussions, we have
\begin{proposition}\label{Abequi}
 The triple $(\ub_f, \ub_m, \theta)\in  \mathbf{X}$ is a solution of problem \eqref{app1}--\eqref{app2a} if and only if
$\mathbf{w}=(\ub_f, \ub_m, \theta) \in \mathbf{X}$ satisfies the equation
$\mathcal{T}_k(\mathbf{w})=\mathcal{G}_k(\mathbf{w})$.
\end{proposition}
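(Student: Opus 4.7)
The plan is to verify this equivalence by simply unpacking the definitions of $\mathcal{T}_k$ and $\mathcal{G}_k$ component-by-component. Since $\mathbf{Y}=\mathbf{V}'\times H^{-1}(\Omega)$ is a product space, the abstract equation \eqref{Abs} in $\mathbf{Y}$ decouples into the pair
\[
\mathcal{L}_k(\ub_f,\ub_m)=\mathcal{J}_k(\mathbf{w})\ \text{in}\ \mathbf{V}',\qquad \mathcal{M}_k(\theta)=\mathcal{K}_k(\mathbf{w})\ \text{in}\ H^{-1}(\Omega).
\]
Testing the first equality against an arbitrary $(\mathbf{v}_f,\mathbf{v}_m)\in \mathbf{V}$ and substituting the definitions \eqref{Lk} and \eqref{Jk}, I obtain an identity that, after moving all terms to a single side, is exactly the discrete weak momentum/Darcy equation \eqref{app1}. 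Similarly, testing the second equality against an arbitrary $\phi\in H^1_0(\Omega)$ and invoking \eqref{Dk} together with \eqref{KK} reproduces precisely the discrete weak temperature equation \eqref{app2a}. The converse implication is obtained by running the same calculation in reverse, which is legitimate because the admissible test space in \eqref{app1} is exactly $\mathbf{V}$ by the definition \eqref{V}, and that of \eqref{app2a} is $H^1_0(\Omega)$.

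The only points worth a brief comment are that the admissible test spaces match on the nose and that no pressure variable appears. Since test functions $(\mathbf{v}_f,\mathbf{v}_m)\in\mathbf{V}$ are divergence-free and satisfy $\mathbf{v}_f\cdot\mathbf{n}_i=\mathbf{v}_m\cdot\mathbf{n}_i$ on $\Gamma_i$, the pressures $P_f, P_m$ drop out upon the formal integration by parts that underlies both formulations, so neither \eqref{app1}--\eqref{app2a} nor \eqref{Abs} involves them explicitly. Thus there is no genuine obstacle in the proof of this proposition: it is essentially a notational reformulation recasting the nonlinear discrete weak problem \eqref{app1}--\eqref{app2a} as a single operator equation on $\mathbf{X}$.

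I expect the main purpose of this bookkeeping is to set up the Leray--Schauder fixed point argument in the next step: by Lemmas \ref{LLk} and \ref{LDk}, the operator $\mathcal{T}_k$ is invertible with continuous inverse $\mathcal{T}_k^{-1}:\mathbf{Y}\to\mathbf{X}$, so \eqref{Abs} can be rewritten equivalently as the fixed-point equation $\mathbf{w}=\mathcal{T}_k^{-1}\mathcal{G}_k(\mathbf{w})$, to which a topological fixed point principle can then be applied after verifying the compactness of $\mathcal{T}_k^{-1}\mathcal{G}_k$ and a priori bounds supplied by the discrete energy inequality of Lemma \ref{DEE}.
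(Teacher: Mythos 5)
Your proof is correct and follows essentially the same route as the paper, which treats the proposition as an immediate consequence of the definitions of $\mathcal{L}_k$, $\mathcal{M}_k$, $\mathcal{J}_k$, $\mathcal{K}_k$ via the reformulation \eqref{wapp1}--\eqref{wapp2} (the paper offers no further proof beyond ``from the above discussions''). Your component-by-component unpacking and the remark that the test spaces coincide exactly is precisely the intended argument.
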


We now proceed to show that the abstract equation \eqref{Abs} admits at least one solution $\mathbf{w} \in \mathbf{X}$. Recalling the definition of $\mathcal{T}_k$ and Lemmas \ref{LLk}--\ref{LDk}, we can conclude that
\begin{lemma}\label{LTG}
Let the assumptions (A1)--(A3) be satisfied. For any given function  $\theta^k\in H^1_0(\Omega)$,
    $\mathcal{T}_k: \mathbf{X}\to\mathbf{Y}$ is an invertible mapping and its inverse $\mathcal{T}^{-1}_k: \mathbf{Y}\to \mathbf{X}$ is continuous.
\end{lemma}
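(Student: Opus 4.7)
The key observation is that $\mathcal{T}_k$ has a block-diagonal structure with respect to the natural splittings $\mathbf{X}=\mathbf{V}\times H^1_0(\Omega)$ and $\mathbf{Y}=\mathbf{V}'\times H^{-1}(\Omega)$: by definition \eqref{Tk}, the velocity component of $\mathcal{T}_k(\mathbf{w})$ depends only on $(\ub_f,\ub_m)$ through $\mathcal{L}_k$, and the temperature component depends only on $\theta$ through $\mathcal{M}_k$, with no coupling between the two blocks. So the plan is simply to invert each block separately using the two preceding lemmas.

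Concretely, given any $(\mathbf{f},g)\in\mathbf{Y}=\mathbf{V}'\times H^{-1}(\Omega)$, I would define $(\ub_f,\ub_m):=\mathcal{L}_k^{-1}(\mathbf{f})\in\mathbf{V}$ and $\theta:=\mathcal{M}_k^{-1}(g)\in H_0^1(\Omega)$. By Lemma \ref{LLk} and Lemma \ref{LDk}, both maps are well-defined bijections, so the triple $\mathbf{w}=(\ub_f,\ub_m,\theta)$ is the unique element of $\mathbf{X}$ satisfying $\mathcal{T}_k(\mathbf{w})=(\mathbf{f},g)$. This gives bijectivity of $\mathcal{T}_k$. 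Equipping $\mathbf{X}$ and $\mathbf{Y}$ with their natural product norms, the inverse can be written explicitly as
$$\mathcal{T}_k^{-1}(\mathbf{f},g)=\bigl(\mathcal{L}_k^{-1}(\mathbf{f}),\,\mathcal{M}_k^{-1}(g)\bigr),$$
and since both $\mathcal{L}_k^{-1}$ and $\mathcal{M}_k^{-1}$ are continuous, so is $\mathcal{T}_k^{-1}$.

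There is essentially no obstacle at this step: the statement is an immediate consequence of the fact that the linear leading-order part of the discrete system \eqref{app1}--\eqref{app2a} decouples into the (regularized Brinkman-type) Stokes--Darcy problem for $(\ub_f,\ub_m)$ and the elliptic problem for $\theta$, each of which has already been handled with the Lax--Milgram theorem. The real difficulty will instead arise in the next stage, where one has to control the nonlinear perturbation $\mathcal{G}_k$ (involving the convective term $\mathrm{div}(\ub_f\otimes\ub_f)$, the quadratic boundary term $\tfrac12|\ub_f|^2\mathbf{n}_i$ on $\Gamma_i$, and the advection $\ub\cdot\nabla\theta$) and then apply the Leray--Schauder principle to the fixed-point equation $\mathbf{w}=\mathcal{T}_k^{-1}\mathcal{G}_k(\mathbf{w})$, using the discrete energy inequality of Lemma \ref{DEE} to derive the a priori bound on the homotopy parameter.
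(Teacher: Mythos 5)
Your proposal is correct and follows exactly the paper's route: Lemma \ref{LTG} is stated there as an immediate consequence of Lemmas \ref{LLk} and \ref{LDk}, using precisely the block-diagonal structure of $\mathcal{T}_k$ with respect to the splittings $\mathbf{X}=\mathbf{V}\times H^1_0(\Omega)$ and $\mathbf{Y}=\mathbf{V}'\times H^{-1}(\Omega)$. Your explicit formula $\mathcal{T}_k^{-1}(\mathbf{f},g)=(\mathcal{L}_k^{-1}(\mathbf{f}),\mathcal{M}_k^{-1}(g))$ and the continuity argument match what the paper leaves implicit.
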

Next, concerning the operator $\mathcal{G}_k$, we introduce the space
$$
\widetilde{\mathbf{Y}}= \left(\big(\mathbf{H}^\frac{3}{4}(\Omega_f)\big)'
\times \mathbf{L}^2(\Omega_m)\right)\times \big(H^\frac12(\Omega)\big)',
$$
then we have
\begin{lemma}\label{LGk}
    The operator $\mathcal{G}_k: \mathbf{X}\to\widetilde{\mathbf{Y}}$ is continuous and it maps bounded sets into bounded sets.
    Moreover, the mapping $\mathcal{G}_k: \mathbf{X}\to\mathbf{Y}$ is compact.
\end{lemma}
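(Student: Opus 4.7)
The plan is first to note that the auxiliary space $\widetilde{\mathbf{Y}}$ is compactly embedded in $\mathbf{Y}$: by the Rellich--Kondrachov theorem one has the compact embeddings $(\mathbf{H}^{3/4}(\Omega_f))'\hookrightarrow\hookrightarrow (\mathbf{H}^{1}(\Omega_f))'$, $\mathbf{L}^{2}(\Omega_m)\hookrightarrow\hookrightarrow (\mathbf{H}^{1}(\Omega_m))'$, and $(H^{1/2}(\Omega))'\hookrightarrow\hookrightarrow H^{-1}(\Omega)$, whence $\widetilde{\mathbf{Y}}\hookrightarrow\hookrightarrow\mathbf{Y}$. Consequently, as soon as I verify that $\mathcal{G}_k$ is continuous and maps bounded sets to bounded sets of $\widetilde{\mathbf{Y}}$, the compactness of $\mathcal{G}_k\colon\mathbf{X}\to\mathbf{Y}$ follows automatically by composition with the compact embedding. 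The heart of the proof then reduces to establishing suitable a~priori bounds for each summand of $\mathcal{J}_k$ and $\mathcal{K}_k$.

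I would estimate each term of \eqref{Jk} and \eqref{KK} separately. The contributions that are linear in $\mathbf{u}$ or $\theta$ (namely the time differences $\delta^{-1}(\mathbf{u}-\mathbf{u}^k)$ and the buoyancy forcings $\theta_f\mathbf{k}$, $\theta_m\mathbf{k}$) have $\mathbf{L}^{2}$ representatives and are immediate, since $\mathbf{L}^{2}\hookrightarrow (\mathbf{H}^{s})'$ for every $s\geq 0$. The delicate quadratic terms in three dimensions are the following. For the convective term $(\mathrm{div}(\mathbf{u}_f\otimes\mathbf{u}_f),\mathbf{v}_f)_f$, exploiting $\nabla\cdot\mathbf{u}_f=0$ rewrites it as $((\mathbf{u}_f\cdot\nabla)\mathbf{u}_f,\mathbf{v}_f)_f$, which by H\"older is dominated by $\|\mathbf{u}_f\|_{\mathbf{L}^6}\|\nabla\mathbf{u}_f\|_{\mathbf{L}^2}\|\mathbf{v}_f\|_{\mathbf{L}^3}$ and hence by $C\|\mathbf{u}_f\|_{\mathbf{H}^1(\Omega_f)}^2\|\mathbf{v}_f\|_{\mathbf{H}^{3/4}(\Omega_f)}$ via the $3$D embeddings $\mathbf{H}^1\hookrightarrow\mathbf{L}^6$ and $\mathbf{H}^{3/4}(\Omega_f)\hookrightarrow\mathbf{L}^3(\Omega_f)$. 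For the interface integral $\int_{\Gamma_i}\frac{1}{2}|\mathbf{u}_f|^2(\mathbf{v}_f\cdot\mathbf{n}_i)\,dS$, the trace theorem combined with surface Sobolev embeddings yields $\mathbf{H}^{1}(\Omega_f)\to\mathbf{H}^{1/2}(\Gamma_i)\hookrightarrow\mathbf{L}^4(\Gamma_i)$ and $\mathbf{H}^{3/4}(\Omega_f)\to\mathbf{H}^{1/4}(\Gamma_i)\hookrightarrow\mathbf{L}^2(\Gamma_i)$ (on the two-dimensional surface $\Gamma_i$), giving a bound of the same multilinear type. For the advection term $(\mathbf{u}\cdot\nabla\theta,\phi)$ in $\mathcal{K}_k$, one uses $\|\mathbf{u}\|_{\mathbf{L}^6}\|\nabla\theta\|_{\mathbf{L}^2}\|\phi\|_{L^3}$ and the embedding $H^{1/2}(\Omega)\hookrightarrow L^3(\Omega)$. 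These multilinear estimates already prove that $\mathcal{G}_k$ sends bounded sets of $\mathbf{X}$ to bounded sets of $\widetilde{\mathbf{Y}}$.

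For sequential continuity, given $\mathbf{w}_n\to\mathbf{w}$ in $\mathbf{X}$, I would decompose the differences in the standard bilinear way: $\mathbf{u}_{f,n}\otimes\mathbf{u}_{f,n}-\mathbf{u}_f\otimes\mathbf{u}_f=\mathbf{u}_{f,n}\otimes(\mathbf{u}_{f,n}-\mathbf{u}_f)+(\mathbf{u}_{f,n}-\mathbf{u}_f)\otimes\mathbf{u}_f$, with the analogous splittings for the interface and transport terms. Strong convergence in $\mathbf{L}^6(\Omega_f)$, in $\mathbf{L}^4(\Gamma_i)$ (by the trace theorem), and of $\nabla\theta_n$ in $\mathbf{L}^2(\Omega)$ then transfers through the same estimates to yield $\mathcal{G}_k(\mathbf{w}_n)\to\mathcal{G}_k(\mathbf{w})$ in $\widetilde{\mathbf{Y}}$, and composing with the compact embedding $\widetilde{\mathbf{Y}}\hookrightarrow\hookrightarrow\mathbf{Y}$ delivers the final compactness assertion. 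The main technical point to watch is the three-dimensional convective term: without the divergence-free identity $\mathrm{div}(\mathbf{u}_f\otimes\mathbf{u}_f)=(\mathbf{u}_f\cdot\nabla)\mathbf{u}_f$, the natural target space for this nonlinearity would be $\mathbf{W}^{-1,3}(\Omega_f)$, which does not embed into $(\mathbf{H}^{3/4}(\Omega_f))'$; it is precisely this algebraic rewriting, together with the specific choice of the fractional exponents $3/4$ and $1/2$ built into $\widetilde{\mathbf{Y}}$, that makes all the quadratic contributions fit into a space whose dual is compactly embedded in $\mathbf{Y}$.
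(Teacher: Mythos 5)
Your proposal is correct and follows essentially the same route as the paper: estimate the linear terms trivially, bound the three quadratic terms (convection, Lions interface integral, thermal advection) by H\"older and Sobolev/trace embeddings so that $\mathcal{G}_k$ maps bounded sets of $\mathbf{X}$ into bounded sets of $\widetilde{\mathbf{Y}}$, and then obtain compactness from $\widetilde{\mathbf{Y}}\hookrightarrow\hookrightarrow\mathbf{Y}$. The only differences are cosmetic choices of exponents (e.g.\ you pair $L^{3/2}$ against $L^{3}$ and $L^{4}(\Gamma_i)$ against $L^{2}(\Gamma_i)$ where the paper uses $L^{4/3}$ against $\mathbf{H}^{3/4}$ and $L^{8/5}(\Gamma_i)$ against $L^{8/3}(\Gamma_i)$), plus your explicit continuity argument, which the paper leaves implicit.
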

\begin{proof}

For all $\mathbf{w}=(\ub_f, \ub_m,  \theta)\in \mathbf{X}$, using the Sobolev embedding theorems  $(d=2, 3)$, we can show that
$
\mathcal{G}_k (\mathbf{w})\in \widetilde{\mathbf{Y}}$.
Indeed, the estimates for the linear terms are obvious, thus we only need to estimate those terms that are nonlinear:
\begin{align}
\|\mathrm{div}(\ub_f\otimes \ub_f)\|
_{\big(\mathbf{H}^\frac{3}{4}(\Omega_f)\big)'}
&\leq C\|\mathrm{div}(\ub_f\otimes \ub_f)\|_{\mathbf{L}^\frac43(\Omega_f)}\non\\
&\leq C\|\mathrm{div}(\ub_f\otimes \ub_f)\|_{\mathbf{L}^\frac32(\Omega_f)}\non\\
&\leq C \|\nabla \ub_f\|_{\mathbf{L}^2(\Omega_f)}\|\ub_f\|_{\mathbf{L}^6(\Omega_f)}
\non\\
&\leq   C\|\ub_f\|_{\mathbf{H}^1(\Omega_f)}^2,\non
\end{align}
\begin{align}
\sup_{\|\mathbf{v}_f\|_{\mathbf{H}^{\frac34}(\Omega_f)}\leq 1}\left|\int_{\Gamma_{i}} |\ub_f|^2 (\mathbf{v}_f\cdot \mathbf{n}_{i}) dS\right|
&\leq \big\||\ub_f|^2\big\|_{\big(\mathbf{H}^{\frac14}(\Gamma_i)\big)'}
\|\mathbf{v}_f\|_{\mathbf{H}^{\frac14}(\Gamma_i)}\non\\
&\leq C \big\||\ub_f|^2\big\|_{\mathbf{L}^\frac{8}{5}(\Gamma_i)}
\|\mathbf{v}_f\|_{\mathbf{H}^{\frac34}(\Omega_f)}\non\\
&\leq C\|\ub_f\|^2_{\mathbf{L}^\frac{16}{5}(\Gamma_i)}
\leq C\|\ub_f\|^2_{\mathbf{L}^4(\Gamma_i)}\non\\
&\leq C\|\ub_f\|^2_{\mathbf{H}^\frac{1}{2}(\Gamma_i)}
\leq C\|\ub_f\|_{\mathbf{H}^1(\Omega_f)}^2,\non
\end{align}
\begin{align}
\|\ub\cdot\nabla \theta\|_{\big(H^\frac12(\Omega)\big)'}&\leq
\|\ub\cdot\nabla \theta\|_{L^\frac32(\Omega)}\non\\
&\leq \|\ub\|_{\mathbf{L}^6(\Omega)}\|\nabla \theta\|_{\mathbf{L}^2(\Omega)}\non\\
&\leq C\|(\ub_f,\ub_m)\|_{\mathbf{V}}\|\theta\|_{H^1_0(\Omega)}.\non
\end{align}
The second conclusion on compactness of $\mathcal{G}_k$ easily follows from the fact $\widetilde{\mathbf{Y}}\hookrightarrow\hookrightarrow \mathbf{Y}$.
\end{proof}

Since the operator $\mathcal{T}_k: \mathbf{X}\to\mathbf{Y}$ is invertible, for any $\mathbf{w}\in \mathbf{X}$ we introduce  $\mathbf{q}= \mathcal{T}_k (\mathbf{w})$ and then
the abstract equation \eqref{Abs} can be rewritten into an equivalent form such that $\mathbf{q}=(\mathcal{G}_k\circ\mathcal{T}_k^{-1})(\mathbf{q})$.
Thanks to Lemmas \ref{LTG} and \ref{LGk},
we see that the mapping $$\mathcal{N}_k\overset{\text{def}}{=}\mathcal{G}_k\circ\mathcal{T}_k^{-1}:\, \mathbf{Y}\to \mathbf{Y}$$ is indeed a compact operator,  because
$\mathcal{T}_k^{-1}$ is continuous and $\mathcal{G}_k$ is compact.

Now the original problem can be reduced to find a fixed point $\mathbf{q}$ of the operator $\mathcal{N}_k$ in $\mathbf{Y}$, that is,
\be
\mathbf{q}=\mathcal{N}_k(\mathbf{q}).\label{IaNK}
\ee
Existence of such a fixed point follows from an application of the Leray--Shauder principle. More precisely, we have

\begin{lemma}\label{degree}
Assume that assumptions (A1)--(A3) are satisfied. For any $\ub_f^k\in \widetilde{\mathbf{H}}_{f,\mathrm{div}}$, $\ub_m^k\in \widetilde{\mathbf{H}}_{m,\mathrm{div}}$ and $\theta^k\in H^1_0(\Omega)$, the abstract equation \eqref{IaNK} admits a solution  $\mathbf{q}\in \widetilde{\mathbf{Y}}\subset \mathbf{Y}$.
\end{lemma}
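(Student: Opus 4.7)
The plan is to apply the Leray--Schauder fixed point principle to the compact operator $\mathcal{N}_k=\mathcal{G}_k\circ\mathcal{T}_k^{-1}:\mathbf{Y}\to\mathbf{Y}$, whose compactness is guaranteed by Lemmas \ref{LTG}--\ref{LGk}. Concretely, I consider the homotopy
\[
\mathbf{q}=\tau\,\mathcal{N}_k(\mathbf{q}),\qquad \tau\in[0,1],
\]
and, setting $\mathbf{w}=(\ub_f,\ub_m,\theta):=\mathcal{T}_k^{-1}(\mathbf{q})\in\mathbf{X}$, rewrite it as the scaled discrete problem $\mathcal{T}_k(\mathbf{w})=\tau\,\mathcal{G}_k(\mathbf{w})$. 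It therefore suffices to produce an a priori bound on $\|\mathbf{w}\|_{\mathbf{X}}$ that is uniform in $\tau\in[0,1]$; by the linear continuity of $\mathcal{T}_k$, this yields the required control of $\|\mathbf{q}\|_{\mathbf{Y}}$.

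To derive the estimate I repeat the derivation of the discrete energy inequality (Lemma \ref{DEE}) for the scaled system, testing with the admissible triple $(\mathbf{v}_f,\mathbf{v}_m,\phi)=(\ub_f,\ub_m,\theta)$. The sole effect of $\tau$ is to multiply every contribution coming from $\mathcal{G}_k$ by $\tau$, while the dissipative terms supplied by $\mathcal{T}_k$ (including the Brinkman regularization $\xi(\nabla\ub_m,\nabla\ub_m)_m$) are untouched. The two crucial nonlinear cancellations persist. First, using $\nabla\cdot\ub_f=0$, $\ub_f|_{\Gamma_f}=\mathbf{0}$ and integration by parts,
\[
\bigl(\mathrm{div}(\ub_f\otimes\ub_f),\ub_f\bigr)_f=\tfrac12\int_{\Gamma_{i}}|\ub_f|^2(\ub_f\cdot\mathbf{n}_{i})\,dS,
\]
which is annihilated exactly by the quadratic Lions interface term. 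Second, the thermal advection $(\ub\cdot\nabla\theta,\theta)$ vanishes after integration by parts, thanks to $\nabla\cdot\ub_f=\nabla\cdot\ub_m=0$, the built-in trace matching $\theta_f=\theta_m$ on $\Gamma_{i}$ (encoded in $\theta\in H^1_0(\Omega)$), and the interfacial flux continuity $\ub_f\cdot\mathbf{n}_{i}=\ub_m\cdot\mathbf{n}_{i}$. The buoyancy terms $\tau(\theta_f\mathbf{k},\ub_f)_f+\tau(\theta_m\mathbf{k},\ub_m)_m$ are controlled by Young's inequality together with Lemma \ref{equinorml} and Poincar\'e's inequality, and are absorbed into the dissipation by the same choice of small $\epsilon$ and large $\sigma$ as in Lemma \ref{DEE}; the factor $\tau\le 1$ only helps in this step.

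Combining these ingredients, the test procedure yields an inequality of the form
\[
c_0\,\|\mathbf{w}\|_{\mathbf{X}}^2+\tfrac{\tau}{\delta}\,\mathcal{E}_\sigma(\ub_f,\ub_m,\theta)\le \tfrac{\tau}{\delta}\,\mathcal{E}_\sigma(\ub_f^k,\ub_m^k,\theta^k),
\]
with $c_0>0$ depending only on $\underline{\nu},\underline{\lambda},\overline{\kappa},\xi,\sigma$ (in particular on the $H^1$-coercivity of $\mathcal{L}_k$ supplied by the Brinkman term for $\ub_m$). Since $\tau\le 1$ and the right-hand side is controlled by $\delta^{-1}\mathcal{E}_\sigma(\ub_f^k,\ub_m^k,\theta^k)$, which is fixed data, this delivers a uniform bound $\|\mathbf{w}\|_{\mathbf{X}}\le R_1$. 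Leray--Schauder then produces a fixed point $\mathbf{q}=\mathcal{N}_k(\mathbf{q})\in\mathbf{Y}$; since $\mathcal{G}_k:\mathbf{X}\to\widetilde{\mathbf{Y}}$ is continuous (Lemma \ref{LGk}), this fixed point automatically satisfies $\mathbf{q}=\mathcal{G}_k(\mathcal{T}_k^{-1}(\mathbf{q}))\in\widetilde{\mathbf{Y}}$, completing the proof. The main technical obstacle lies in verifying carefully that the two nonlinear cancellations and the absorption of the buoyancy go through uniformly in $\tau\in[0,1]$ in the decomposed-domain geometry; once this is done, the remainder is a direct repetition of the calculations already carried out in Lemma \ref{DEE}.
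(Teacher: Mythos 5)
Your proposal is correct and follows essentially the same route as the paper: the Leray--Schauder principle applied to $\mathcal{N}_k=\mathcal{G}_k\circ\mathcal{T}_k^{-1}$, with the uniform-in-$s$ a priori bound obtained by testing the scaled system $\mathcal{T}_k(\mathbf{w})=s\,\mathcal{G}_k(\mathbf{w})$ with $(\ub_f,\ub_m,\theta)$ itself, exploiting the exact cancellation of the convective term against the Lions interface term and the vanishing of the thermal advection, and absorbing the buoyancy via the weighted energy $\mathcal{E}_\sigma$. The paper's estimate \eqref{DisEnLawb} is precisely the inequality you describe, and the final observation that $\mathbf{q}=s\,\mathcal{G}_k(\mathbf{w})\in\widetilde{\mathbf{Y}}$ matches as well.
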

\begin{proof}
According to the abstract result \cite[Theorem 6.A]{Zei92}, it remains to show that there exists a constant $R>0$ such that if $\mathbf{q}\in \mathbf{Y}$ and $0\leq s\leq 1$ satisfying $\mathbf{q}=s\mathcal{N}_k(\mathbf{q})$, then $\|\mathbf{q}\|_{\mathbf{Y}}\leq R$.

We consider $\mathbf{q}\in \mathbf{Y}$ and $0\leq s\leq 1$ satisfying $\mathbf{q}=s\mathcal{N}_k(\mathbf{q})$. Denote $\mathbf{w} = \mathcal{T}_k ^{-1}(\mathbf{q})$. Then we have
$\mathcal{T}_k(\mathbf{w})=s\mathcal{G}_k(\mathbf{w})$ that  is equivalent to the weak formulation
\begin{eqnarray}
&&
2\left(\nu(\theta_f^k)\mathbb{D}(\ub_f),\mathbb{D}(\mathbf{v}_f)\right)_f
+\left(\nu(\theta_m^k)\mathbb{K}^{-1}\mathbf{u}_m,\mathbf{v}_m)\right)_m
\non\\
&& +\xi (\nabla \mathbf{u}_m,\nabla \mathbf{v}_m)_m +\sum_{j=1}^{d-1} \int_{\Gamma_{i}}\frac{\alpha \nu(\theta^k_m)}{\sqrt{{\rm trace} (\mathbb{K})}}(\ub_f\cdot\btau_i)(\mathbf{v}_f\cdot\btau_i)dS\non\\
&=&-s\left(\frac{\ub_f-\ub_f^k}{\delta} ,\mathbf{v}_f\right)_f
-s\varpi \left(\frac{\ub_m-\ub_m^k}{\delta} ,\mathbf{v}_m\right)_m\non\\
&&-s\left(\mathrm{div}(\ub_f\otimes \ub_f), \mathbf{v}_f\right)_f
+  s\int_{\Gamma_{i}} \frac12|\ub_f|^2 (\mathbf{v}_f\cdot \mathbf{n}_{i}) dS  \non\\
&& +s(\theta_f \mathbf{k},  \mathbf{v}_f )_f
+  s(\theta_m \mathbf{k},  \mathbf{v}_m )_m,
 \label{wapp1c}
 \end{eqnarray}
and
\begin{equation}
 \big(\lambda(\theta^k)\nabla \theta,\nabla\phi\big)
=-s\left(\frac{\theta-\theta^k}{\delta},\phi\right) - s(\ub\cdot \nabla \theta ,\phi).
   \label{wapp2c}
\end{equation}
Similar to the derivation of the discrete energy inequality in Lemma \ref{DEE}, we can derive
\begin{eqnarray}
&&\frac{s}{2\delta}\left(\ub_f,\mathbf{u}_f\right)_f+ \frac{s}{2\delta}\big(\ub_f-\ub_f^k ,\ub_f-\ub_f^k\big)_f
+
\frac{\varpi s}{2\delta}\left(\ub_m,\mathbf{u}_m\right)_m\non\\
&&+ \frac{\varpi s}{2\delta}\big(\ub_m-\ub_m^k ,\ub_m-\ub_m^k\big)_m+2\left(\nu(\theta_f^k)\mathbb{D}(\ub_f),\mathbb{D}(\mathbf{u}_f)\right)_f\non\\
&&+\big( \nu(\theta_m^k)\mathbb{K}^{-1}\ub_m, \ub_m\big)_m  +\xi (\nabla \mathbf{u}_m,\nabla \mathbf{u}_m)_m
\non\\
&& +\sum_{j=1}^{d-1} \int_{\Gamma_{i}}\frac{\alpha \nu(\theta^k_m)}{\sqrt{{\rm trace} (\mathbb{K})}} |\ub_f\cdot\btau_j|^2dS \non\\
&=&\frac{s}{2\delta}\big(\ub_f^k,\mathbf{u}_f^k\big)_f
+\frac{\varpi s}{2\delta}\big(\ub_m^k,\mathbf{u}_m^k\big)_m  + s(\theta_f \mathbf{k},  \mathbf{u}_f )_f + s(\theta_m \mathbf{k},  \mathbf{u}_m )_m,
\label{eapp1b}
 \end{eqnarray}
and
\begin{equation}
\frac{s}{2\delta}\left(\theta,\theta \right)
+\frac{s}{2\delta}\left(\theta-\theta^k,\theta-\theta^k \right)  + \int_\Omega \lambda(\theta^k)|\nabla\theta|^2 dx
=\frac{s}{2\delta}\left(\theta^k,\theta^k \right).
\label{eapp2b}
\end{equation}
Similar to the previous argument for \eqref{DisEnLaw}, using the fact $s\in [0,1]$, we can derive the following discrete energy inequality with respect to $s$:
\begin{eqnarray}\label{DisEnLawb}
&&
  s\mathcal{E}_\sigma (\ub_f,\ub_m,\theta)
  +\delta \underline{\nu}\left(\mathbb{D}(\ub_f), \mathbb{D}(\ub_f)\right)_f
  +\frac12\delta\underline{\nu}\bar{\kappa}^{-1} \left(\ub_m, \ub_m\right)_m
\non\\
&& +\frac12 \delta \alpha \underline{\nu}\bar{\kappa}^{-\frac12}\sum_{j=1}^{d-1}\int_{\Gamma_{i}} |\ub_f\cdot\btau_j|^2 dS
+\delta\xi (\nabla \mathbf{u}_m,\nabla \mathbf{u}_m)_m
+\frac12 \delta \sigma \underline{\lambda} \int_\Omega |\nabla\theta|^2 dx
  \non\\
&\le& s\mathcal{E}_\sigma(\ub_f^k,\ub_m^k,\theta^k)\leq \mathcal{E}_\sigma(\ub_f^k,\ub_m^k,\theta^k),
\end{eqnarray}
with a sufficiently large constant $\sigma$ that is independent of $s$.
 As a consequence, we have (recall also Lemma \ref{equinorml})
$$
\|\mathbf{w}\|_{\mathbf{X}}\leq C_k,\quad \forall\, s\in [0,1],
$$
where the constant $C_k$ may depend on $\delta$, $\xi$ and $\sigma$, but is independent of $s$.

In order to derive an estimate for $\mathbf{q}=\mathcal{T}_k(\mathbf{w})\in \mathbf{Y}$, we recall the relation $\mathbf{q} = s\mathcal{G}_k(\mathbf{w})$ and Lemma \ref{LGk} that $\mathcal{G}_k:\,\mathbf{X}\to \widetilde{\mathbf{Y}}$  maps bounded sets in $\mathbf{X}$ into bounded (compact) sets in $\mathbf{Y}$. Thus, we get $\mathbf{q}\in \widetilde{\mathbf{Y}}$ and
$$\|\mathbf{q}\|_{\mathbf{Y}} = \|s\mathcal{G}_k(\mathbf{w})\|_{\mathbf{Y}}\leq C(\|\mathbf{w}\|_{\mathbf{X}}+1)\leq R,$$
where the constant $R$ depends on $C_k$,
but is independent of $s$.

The proof of Lemma \ref{degree} is complete.
\end{proof}

As a consequence, we can conclude the existence of a weak solution to the time discrete problem \eqref{app1}--\eqref{app2a} from Lemmas \ref{DEE}, \ref{degree}  and Proposition \ref{Abequi}, that is
\begin{lemma}\label{DEEl}
Assume that assumptions (A1)--(A3) are satisfied.
For every $\ub_f^k\in \widetilde{\mathbf{H}}_{f,\mathrm{div}}$, $\ub_m^k\in \widetilde{\mathbf{H}}_{m,\mathrm{div}}$ and $\theta^k\in H^1_0(\Omega)$,  there exists a weak solution $(\ub_f, \ub_m, \theta)$ to the discrete problem \eqref{app1}--\eqref{app2a} such that
$$
 (\ub_f,\ub_m) \in \mathbf{V}, \quad \theta \in H_0^1(\Omega).
$$
Moreover, the solution $(\ub_f, \ub_m, \theta)$ satisfies the discrete energy inequality \eqref{DisEnLaw}.
\end{lemma}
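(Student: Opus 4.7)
The plan is to assemble the lemmas established earlier in this subsection. All of the heavy lifting has been done: the invertibility of $\mathcal{T}_k$ (Lemma \ref{LTG}), the compactness of $\mathcal{G}_k$ (Lemma \ref{LGk}), the Leray--Schauder-type fixed point existence (Lemma \ref{degree}), and the discrete energy estimate available a priori for any solution (Lemma \ref{DEE}).

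First, I would apply Lemma \ref{degree} to obtain a fixed point $\mathbf{q}\in\widetilde{\mathbf{Y}}\subset\mathbf{Y}$ of the compact operator $\mathcal{N}_k=\mathcal{G}_k\circ\mathcal{T}_k^{-1}$, i.e. $\mathbf{q}=\mathcal{N}_k(\mathbf{q})$. Then I would set $\mathbf{w}:=\mathcal{T}_k^{-1}(\mathbf{q})$, which is well defined in $\mathbf{X}$ by Lemma \ref{LTG}. Applying $\mathcal{T}_k$ to both sides of the fixed point identity yields $\mathcal{T}_k(\mathbf{w})=\mathcal{G}_k(\mathbf{w})$, which is precisely the abstract equation \eqref{Abs}. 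Writing $\mathbf{w}=(\ub_f,\ub_m,\theta)$ and invoking Proposition \ref{Abequi}, this triple is a weak solution of \eqref{app1}--\eqref{app2a} with $(\ub_f,\ub_m)\in\mathbf{V}$ and $\theta\in H^1_0(\Omega)$, which gives the claimed regularity.

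It remains to justify the discrete energy inequality \eqref{DisEnLaw}. For this I would simply invoke Lemma \ref{DEE}, whose hypotheses are exactly that $\ub_f^k\in\widetilde{\mathbf{H}}_{f,\mathrm{div}}$, $\ub_m^k\in\widetilde{\mathbf{H}}_{m,\mathrm{div}}$, $\theta^k\in H^1_0(\Omega)$ (given) and that $(\ub_f,\ub_m,\theta)$ is a solution to \eqref{app1}--\eqref{app2a} lying in $\mathbf{H}_{f,\mathrm{div}}\times\mathbf{H}_{m,\mathrm{div}}\times H^1_0(\Omega)$ with the matching normal-trace condition on $\Gamma_i$ built into $\mathbf{V}$. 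That delivers \eqref{DisEnLaw} with the same $\sigma$ as there.

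There is no serious new obstacle here; the only thing to be mildly careful about is bookkeeping. In particular, one must check that the constant $\sigma$ can be chosen uniformly so that Lemma \ref{DEE} applies at the current step --- but this is automatic since the selection of $\epsilon$ in \eqref{epsi1} depends only on $\underline{\nu}$, $\overline{\kappa}$ and $\alpha$, none of which depend on $k$ or on the solution. I would also briefly note that the energy inequality holds as equality when no Galerkin truncation is performed; the inequality in \eqref{DisEnLaw} is just what Lemma \ref{DEE} phrases it as. With these remarks the statement follows.
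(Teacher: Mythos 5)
Your proof is correct and follows essentially the same route as the paper, which likewise obtains Lemma \ref{DEEl} directly by combining Lemma \ref{degree}, Proposition \ref{Abequi} and Lemma \ref{DEE}. (Your closing aside that the energy bound would be an equality absent truncation is inaccurate---the inequality in \eqref{DisEnLaw} comes from absorbing the buoyancy terms via Young's inequality, not from any truncation---but it plays no role in the argument.)
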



\subsection{Construction of approximate solutions}

Once we have proved the existence of weak solutions to the time-discrete problem \eqref{app1}--\eqref{app2a}, we are able to construct approximate solutions to the regularized time-continuous system \eqref{weak1r}--\eqref{weak3r}. Recall that $\delta= \frac{T}{N}$, where $T>0$ and $N$ is an positive integer. We set $$t_k=k \delta, \quad k=0, 1,\cdots, N.$$
Let $(\ub_f^{k+1}, \ub_m^{k+1}, \theta^{k+1})$ ($k=0, 1,\cdots, N-1$) be chosen successively as a solution of the discrete problem \eqref{app1}--\eqref{app2a} with $(\ub_f^{k}, \ub_m^k, \theta^{k})$ being the ``initial value" (see Lemma \ref{DEEl}). In particular, we set $(\ub_f^{0}, \ub_m^{0}, \theta^{0})=(\ub_{0f}, \ub_{0m}, \theta_0)$ with the choice $\mathbf{u}_m^0=\mathbf{0}$ when $\varpi=0$.
Then for $k=0,1, \cdots, N-1$, we define the approximate solutions as follows
\begin{align*}
&\theta^\delta :=\frac{t_{k+1}-t}{\delta}\theta^{k}+\frac{t-t_k}{\delta}\theta^{k+1}, &\text{for}\ t \in [t_k, t_{k+1}],  \\
&\mathbf{u}^\delta_f:= \frac{t_{k+1}-t}{\delta}\ub_f^{k}+\frac{t-t_k}{\delta}\ub_f^{k+1}, &\text{for}\ t \in [t_k, t_{k+1}],\\
&\mathbf{u}^\delta_m:= \frac{t_{k+1}-t}{\delta}\ub_m^{k}+\frac{t-t_k}{\delta}\ub_m^{k+1}, &\text{for}\ t \in [t_k, t_{k+1}],\\
& \widehat{\ub}_f^\delta: =\ub_f^{k+1},&\text{for}\  t \in (t_k, t_{k+1}],\\
& \widehat{\ub}_m^\delta: =\ub_m^{k+1},&\text{for}\  t \in (t_k, t_{k+1}],\\
& \widehat{\mathbf{u}}^\delta|_{\Omega_f}= \widehat{\ub}_f^\delta, \quad \widehat{\mathbf{u}}^\delta|_{\Omega_m}=\widehat{\mathbf{u}}^\delta_m,
& \text{for}\  t \in (t_k, t_{k+1}],\\
&\widehat{\theta}^\delta := \theta^{k+1},  & \text{for}\  t \in (t_k, t_{k+1}],  \\
&\widetilde{\theta}^\delta := \theta^{k}, & \text{for}\  t \in [t_k, t_{k+1}).
\end{align*}
\begin{remark}
It follows from the above definitions that $\theta^\delta$, $\mathbf{u}^\delta_f$, $\mathbf{u}^\delta_m$ are continuous piecewise linear functions in time, while
 $\widehat{\ub}_f^\delta$, $\widehat{\ub}_m^\delta$, $\widehat{\theta}^\delta$ are piecewise constant (in time) functions being right continuous at the nodes
$\{t_{k+1}\}$ and  $\widetilde{\theta}^\delta$ is left continuous at the nodes $\{t_k\}$.
\end{remark}

Using the above definition of approximate solutions, we can derive from the discrete problem \eqref{app1}--\eqref{app2a} that the following identities hold:
 \begin{eqnarray}
 &&\int_0^T\big(\partial_t\ub_f^\delta,\mathbf{v}_f\big)_f dt
 +\varpi\int_0^T\big(\partial_t \ub_m^\delta,\mathbf{v}_m\big)_m dt\non\\
 &&
 +\int_0^T \big(\mathrm{div}(\widehat{\ub}_f^\delta\otimes \widehat{\ub}_f^\delta), \mathbf{v}_f\big)_fdt
 +2\int_0^T\left(\nu(\widetilde{\theta}_f^\delta)\mathbb{D}(\widehat{\ub}_f^\delta),
 \mathbb{D}(\mathbf{v}_f)\right)_fdt
 \non\\
&& + \int_0^T \left(\nu(\widetilde{\theta}_m^\delta)\mathbb{K}^{-1}\widehat{\mathbf{u}}_m^\delta,\mathbf{v}_m)\right)_m dt + \xi \int_0^T \left(\nabla \widehat{\mathbf{u}}_m^\delta,\nabla \mathbf{v}_m\right)_m dt
\non\\
&&+\sum_{j=1}^{d-1}\int_0^T\!\int_{\Gamma_{i}}
\frac{\alpha \nu(\widetilde{\theta}_m^\delta)}{\sqrt{{\rm trace} (\mathbb{K})}}
(\widehat{\ub}_f^\delta \cdot\btau_j)(\mathbf{v}_f\cdot\btau_j)dSdt\non\\
&&- \int_0^T\! \int_{\Gamma_{i}} \frac12|\widehat{\ub}_f^\delta|^2 (\mathbf{v}_f\cdot \mathbf{n}_{i}) dSdt  \non\\
&=&  \int_0^T \big(\widehat{\theta}_f^\delta \mathbf{k},  \mathbf{v}_f \big)_f dt +
\int_0^T \big(\widehat{\theta}_m^\delta \mathbf{k},  \mathbf{v}_m \big)_m dt,\label{weak1ra}
\end{eqnarray}
for any $\mathbf{v}_f\in C^1_0((0,T); \mathbf{H}_{f,\mathrm{div}})$, $\mathbf{v}_m\in C^1_0((0,T); \mathbf{H}_{m,\mathrm{div}})$ with $\mathbf{v}_f\cdot \mathbf{n}_i=\mathbf{v}_m\cdot \mathbf{n}_i$ on $\Gamma_i$, and
\begin{align}
&\int_0^T\big(\partial_t \theta^\delta,\phi\big)dt+\int_0^T\big(\lambda(\widetilde{\theta}^\delta) \nabla \widehat{\theta}^\delta,\nabla\phi\big)dt =\int_0^T\big(\widehat{\ub}^\delta \widehat{\theta}^\delta ,\nabla \phi\big)dt
\label{weak3ra}
\end{align}
for any $\phi\in C_0^1((0,T); H^1(\Omega))$.

Besides, in analogy to the estimates for  \eqref{eapp1} and \eqref{eapp2}, we can obtain the energy inequalities for $t\in [0,T]$:
\begin{eqnarray}
&&\frac{1}{2}\|\widehat{\ub}^\delta_f(t)\|_{\mathbf{L}^2(\Omega_f)}^2
+
\frac{\varpi}{2}\|\widehat{\ub}^\delta_m(t)\|_{\mathbf{L}^2(\Omega_m)}^2
+2\int_0^t\left(\nu(\widetilde{\theta}^\delta_f)\mathbb{D}(\widehat{\ub}^\delta_f),\mathbb{D}(\widehat{\mathbf{u}}^\delta_f)\right)_f d\tau\non\\
&&+\int_0^t \left( \nu(\widetilde{\theta}^\delta_m)\mathbb{K}^{-1}\widehat{\ub}^\delta_m, \widehat{\ub}^\delta_m\right)_m d\tau
 +\sum_{j=1}^{d-1}\int_0^t\! \int_{\Gamma_{i}}\frac{\alpha \nu(\widetilde{\theta}^\delta_m)}{\sqrt{{\rm trace} (\mathbb{K})}} |\widehat{\ub}^\delta_f\cdot\btau_j|^2 dS d\tau \non\\
 && + \xi\int_0^t\left(\nabla \widehat{\ub}^\delta_m,\nabla \widehat{\ub}^\delta_m\right)_m d\tau\non\\
&\leq &\frac{1}{2}\|\ub_{0f}\|_{\mathbf{L}^2(\Omega_f)}^2
+\frac{\varpi}{2}\|\ub_{0m}\|_{\mathbf{L}^2(\Omega_m)}^2 + \int_0^t \big(\widehat{\theta}^\delta_f \mathbf{k},  \widehat{\mathbf{u}}^\delta_f \big)_f + \big(\widehat{\theta}^\delta_m \mathbf{k},  \widehat{\mathbf{u}}^\delta_m \big)_m d\tau,
\label{eapp1c}
 \end{eqnarray}
and
\begin{align}
& \frac{1}{2}\|\widehat{\theta}^\delta(t)\|_{L^2(\Omega)}^2
 +\int_0^t  \int_\Omega \lambda(\widetilde{\theta}^\delta)|\nabla\widehat{\theta}^\delta|^2 dx d\tau
\leq \frac{1}{2}\|\theta_0\|_{L^2(\Omega)}^2.
\label{eapp2c}
\end{align}
Let $\mathcal{E}_\sigma^\delta(t)$ be the piecewise linear interpolation of the discrete energy $\mathcal{E}_\sigma(\ub_f^{k},\ub_m^{k}, \theta^k)$ (with the same choice for the constant  $\sigma$ as in Lemma \ref{DEE}) such that
\be \mathcal{E}_\sigma^\delta(t)=\frac{t_{k+1}-t}{\delta}\mathcal{E}_\sigma(\ub_f^{k},\ub_m^{k}, \theta^k)+\frac{t-t_k}{\delta}\mathcal{E}_\sigma(\ub_f^{k+1},\ub_m^{k+1}, \theta^{k+1}), \quad \text{for}\ t \in [t_k, t_{k+1}], \non
\ee
and $\mathcal{D}_\sigma^\delta(t)$ be the  approximate energy  dissipation
\bea
\mathcal{D}_\sigma^\delta(t)&=& 2 \left(\nu(\theta_f^k)\mathbb{D}(\ub_f^{k+1}), \mathbb{D}(\ub_f^{k+1})\right)_f+ \left( \nu(\theta_m^k)\mathbb{K}^{-1}\ub_m^{k+1}, \ub_m^{k+1}\right)_m\non\\
 && +\xi(\nabla \ub_m^{k+1}, \nabla \ub_m^{k+1})_m + \sigma \int_\Omega \lambda(\theta^k)|\nabla\theta^{k+1}|^2 dx\non\\
 && +\sum_{j=1}^{d-1}\int_{\Gamma_{i}} \frac{\alpha \nu(\theta^k_m)}{\sqrt{{\rm trace}(\mathbb{K})}}|\ub_f^{k+1}\cdot\btau_j|^2 dS, \quad \text{for}\ t \in (t_k, t_{k+1}).
  \non
\eea
We see from the discrete energy estimate \eqref{DisEnLaw} that for $k=0,1, \cdots, N-1$, it holds
\begin{align}
\frac{d}{dt}\mathcal{E}_\sigma^\delta(t)
&=\frac{1}{\delta}\left[\mathcal{E}_\sigma(\ub_f^{k+1}, \ub_m^{k+1}, \theta^{k+1})-\mathcal{E}_\sigma(\ub_f^{k}, \ub_m^{k}, \theta^k)\right]\non\\
&\leq - \frac12\mathcal{D}_\sigma^\delta(t),\quad \text{for}\ t \in (t_k, t_{k+1}).\label{bela}
\end{align}
In particular, we have for all $t\in [0,T]$,
\begin{align}
\mathcal{E}_\sigma(\widehat{\ub}_f^{\delta}(t), \widehat{\ub}_m^{\delta}(t), \widehat{\theta}^{\delta}(t))
+ \frac12\int_0^t \mathcal{D}_\sigma^\delta(\tau) d\tau\leq \mathcal{E}_\sigma(\ub_{0f},\ub_{0m},  \theta_0).\label{ee1}
\end{align}

\subsection{Proof of Theorem \ref{thmEx}}
We now proceed to finish the proof of Theorem \ref{thmEx}. First, we prove the conclusion for the case $\varpi>0$, and then we point out necessary modifications for the case $\varpi=0$.

\subsubsection{Case $\varpi>0$}

\textbf{Step 1. Passage to the limit $\delta\to 0$}.
First, we pass to the limit as $\delta\to 0$ while keeping the regularizing parameter $\xi>0$ fixed. From the energy inequality \eqref{ee1},  we can derive estimates on the approximate solutions that are uniform in $\delta$:
\begin{align}
&\|\widehat{\mathbf{u}}^\delta_f\|_{L^\infty(0,T;\mathbf{L}^2(\Omega_f))}
+\varpi^\frac12\|\widehat{\mathbf{u}}^\delta_m\|_{L^\infty(0,T;\mathbf{L}^2(\Omega_m))}
+\|\widehat{\theta}^\delta\|_{L^\infty(0,T;L^2(\Omega))} \leq C,\label{con1} \\
&\|\mathbb{D}(\widehat{\mathbf{u}}^\delta_f)\|_{L^2(0,T;\mathbf{L}^2(\Omega_f))}
+\sum_{j=1}^{d-1}\|\widehat{\ub}_f^\delta\cdot\btau_j\|_{L^2(0,T; L^2(\Gamma_{i}))}\leq C, \\
&\|\widehat{\mathbf{u}}_m^\delta\|_{L^2(0,T;\mathbf{L}^2(\Omega_m))}
+\xi^\frac12\|\nabla \widehat{\mathbf{u}}_m^\delta\|_{L^2(0,T;\mathbf{L}^2(\Omega_m))}
\leq C, \label{conm1}\\
& \|\nabla \widehat{\theta}^\delta\|_{L^2(0,T;\mathbf{L}^2(\Omega))} \leq C,\label{mues1}
\end{align}
where the constant $C$ depends on $\mathcal{E}_\sigma(\ub_{0f},\ub_{0m},\theta_0)$
and $\Omega$, but is independent of the parameters $\delta$ and $\xi$.

From the uniform estimates \eqref{con1}--\eqref{mues1} and Lemma \ref{equinorml}, we deduce that there exists a convergent subsequence $\{(\widehat{\ub}_f^\delta, \widehat{\ub}_m^\delta, \widehat{\theta}^\delta)\}$ (still denoted by the same symbols for simplicity) as $\delta \to 0$ (or equivalently $N\to +\infty$) such that
\be
\begin{cases}
\widehat{\mathbf{u}}_f^\delta \rightarrow \mathbf{u}_f^\xi & \text{ weakly star in } L^\infty(0,T; \mathbf{L}^2(\Omega_f)),\\
&\text{ weakly  in } L^2(0,T; \mathbf{H}^1(\Omega_f)),\\
 \widehat{\mathbf{u}}_m^\delta \rightarrow  \mathbf{u}_m ^\xi &\text{ weakly star in } L^\infty(0,T; \mathbf{L}^2(\Omega_m)),\\
 &\text{ weakly  in } L^2(0,T; \mathbf{H}^1(\Omega_m)),\\
\widehat{\theta}^\delta \rightarrow \theta^\xi & \text{ weakly star in } L^\infty(0,T; L^2(\Omega)), \\
& \text{ weakly  in } L^2(0,T; H^1_0(\Omega)),
\end{cases}
\label{conwmu}
\ee
for certain limit functions $(\ub_f^\xi, \ub_m^\xi, \theta^\xi)$ satisfying
\begin{align*}
& \mathbf{u}_f^\xi \in L^\infty(0, T; \widetilde{\mathbf{H}}_{f,\mathrm{div}})\cap L^2(0, T; \mathbf{H}_{f,\mathrm{div}}), \\
& \ub_m^\xi \in L^\infty(0, T; \widetilde{\mathbf{H}}_{m,\mathrm{div}})\cap L^2(0,T; \mathbf{H}_{m,\mathrm{div}}),\\
& \theta^\xi \in L^\infty(0, T; L^2(\Omega))\cap L^2(0, T; H^1_0(\Omega)),
\end{align*}
with $\ub_f^\xi\cdot\mathbf{n}_{i}=\ub_m^\xi\cdot\mathbf{n}_{i}$ on $\Gamma_i$.

In order to pass to the limit in those nonlinear terms, we need to obtain some information on the strong convergence of  $\widehat{\theta}^\delta$ (up to a subsequence). It follows from  equation \eqref{weak3ra}, the Gagliardo--Nirenberg inequality and the Sobolev embedding theorem that
\bea
&& \|\partial_t \theta^\delta\|^{\frac{4}{3}}_{L^{\frac{4}{3}}(0,T;H^{-1}(\Omega))}\non\\
& \leq& C\int_0^T \left(\|\nabla \widehat{\theta}^\delta\|_{\mathbf{L}^2(\Omega)}^\frac{4}{3}
      + \|\widehat{\theta}^\delta\|_{L^3(\Omega)}^\frac{4}{3}
      \| \widehat{\ub}^\delta\|_{\mathbf{L}^6(\Omega)}^\frac{4}{3}
      \right) dt\non\\
&\leq& C\int_0^T \|\nabla \widehat{\theta} ^\delta\|_{\mathbf{L}^2(\Omega)}^{\frac{4}{3}}dt
   + C \sup_{0\leq t\leq T} \|\widehat{\theta}^\delta\|_{L^2(\Omega)}^\frac{2}{3}
   \int_0^T  \|\widehat{\theta}^\delta\|_{H^1(\Omega)}^\frac{2}{3}\| \widehat{\ub}^\delta\|_{\mathbf{H}^1(\Omega)}^\frac{4}{3} dt\non\\
&\leq& C\int_0^T \Big(\|\nabla \widehat{\theta}^\delta\|_{\mathbf{L}^2(\Omega)}^2+1\Big)dt+ C \sup_{0\leq t\leq T} \|\widehat{\theta}^\delta\|_{L^2(\Omega)}^\frac{2}{3} \int_0^T  \Big(\|\widehat{\theta}^\delta\|_{H^1(\Omega)}^2+\|  \widehat{\ub}^\delta\|_{\mathbf{H}^1(\Omega)}^2\Big) dt\non\\
&\leq& C_T,\label{vpt1}
\eea
where the constant $C_T$ is independent of $\delta$, but may depend on $\xi$.
As a result, it follows that $\partial_t \theta^\delta \in L^{\frac43}(0,T; H^{-1}(\Omega))$ is bounded and $
\partial_t \theta^\delta \to \partial_t \theta^\xi $ in the sense of distribution. On the other hand, by the definition of $\theta^\delta$, it satisfies similar uniform estimates like those for $\widehat{\theta}^\delta$. Hence, applying Simon's compactness lemma (see e.g., \cite{Si85}), we deduce that there exists $$\theta^*\in L^2(0,T; H^{1-\beta}(\Omega))\cap C([0,T]; H^{-\beta}(\Omega)),$$
for some $\beta \in (0,\frac12)$ such that up to a subsequence,
$$ \theta^\delta \to \theta^*  \quad \text{strongly in}\ L^2(0,T; H^{1-\beta}(\Omega))\cap C([0,T]; H^{-\beta}(\Omega))\quad
\text{as}\ \delta\to 0.
$$
 Due to the uniqueness of limit (for the same convergent subsequence), we have $\theta^*=\theta^\xi$. Besides, by \eqref{conwmu} and \eqref{vpt1}, we also have $\theta^\xi\in C_w([0,T]; L^2(\Omega))$. Thus, concerning the initial datum, since by definition $\theta^\delta|_{t=0}=\theta_0$, we infer that
$$\theta^\xi|_{t=0}= \theta_0.$$
Next, since
$$\|\widehat{\theta}^\delta-\theta^\delta\|_{H^{-1}(\Omega)}
=\left\|(t_{k+1}-t)\frac{(\theta^{k+1}-\theta^k)}{\delta}\right\|_{H^{-1}(\Omega)}
\leq \delta \|\partial_t \theta^\delta\|_{H^{-1}(\Omega)}, \quad t\in (t_k, t_{k+1}],$$
for $k=0,1,...,N-1$, we infer from \eqref{vpt1} that
\be
\int_0^T \|\widehat{\theta}^\delta-\theta^\delta\|_{H^{-1}(\Omega)} ^\frac43 dt\leq \delta^\frac43 \int_0^T \|\partial_t \theta^\delta\|_{H^{-1}(\Omega)}^\frac43 dt\to 0 \quad \text{as}\ \delta \to 0,
\ee
which implies
\begin{align*}
\widehat{\theta}^\delta-\theta^\delta \to 0 \quad \text{strongly in}\ \ L^\frac43(0,T; H^{-1}(\Omega))\ \ \text{ as } \delta \rightarrow 0.
\end{align*}
Similarly, one can show that $\|\widetilde{\theta}^\delta -\theta^\delta\|_{L^\frac43(0,T; H^{-1}(\Omega))} \rightarrow 0$ as $\delta \rightarrow 0$. Therefore, the sequences $\{\theta^\delta\}$, $\{\widehat{\theta}^\delta\}$ and $\{\widetilde{\theta}^\delta\}$, if convergent, should converge to the same limit $\theta^\xi$. Besides, it follows from the above strong convergence, the uniform bounds \eqref{con1}, \eqref{mues1} and an interpolation argument that  as $\delta \rightarrow 0$,
\be
\widehat{\theta}^\delta, \widetilde{\theta}^\delta \to \theta^\xi \quad \text{strongly in}\ \ L^\frac{32}{17}(0,T;  H^\frac34(\Omega))\cap L^q(0,T; L^2(\Omega)).\label{vpt}
\ee
for any $q\in[2,+\infty)$. The strong convergence results also imply the pointwise almost everywhere convergence of $\theta^\delta$, $\widehat{\theta}^\delta$, $\widetilde{\theta}^\delta$ in $\Omega\times (0,T)$ (again up to a subsequence).

Next, using equation \eqref{weak1ra} and taking $\mathbf{v}_f\in L^{4} \big(0,T; \widehat{\mathbf{H}}_{f,\mathrm{div}})$,  $\mathbf{v}_m=\mathbf{0}$, we can deduce that
\bea
 && \|\partial_t \ub_f^\delta\|^{\frac{4}{3}}_{L^{\frac{4}{3}} \big(0,T;(\widehat{\mathbf{H}}_{f,\mathrm{div}})^\prime\big)}\non\\
&\leq&
C\int_0^T
 \left(\|\widehat{\ub}_f^\delta\|_{\mathbf{L}^3(\Omega_f)}^\frac43
 \|\nabla \widehat{\ub}_f^\delta\|_{\mathbf{L}^2(\Omega_f)}^\frac43+
 \| \widehat{\ub}_f^\delta\|_{\mathbf{H}^1(\Omega_f)}^\frac43
 +\|\widehat{\theta}_f^\delta\|_{L^2(\Omega_f)}^\frac43
 \right) dt\non\\
 &\leq&
C\int_0^T
 \left(\|\widehat{\ub}_f^\delta\|_{\mathbf{H}^1(\Omega_f)}^2
\|\widehat{\ub}_f^\delta\|_{\mathbf{L}^2(\Omega_f)}^\frac{2}{3} + \| \widehat{\ub}_f^\delta\|_{\mathbf{H}^1(\Omega_f)}^2
+ \|\widehat{\theta}^\delta_f\|_{L^2(\Omega_f)}^2 +1
 \right) dt\non\\
&\leq& C_T.\label{ut}
\eea
In a similar manner, taking test functions $\mathbf{v}_f=\mathbf{0}$ and $\mathbf{v}_m\in L^2(0,T;\widehat{\mathbf{H}}_{m,\mathrm{div}})$, we have
\bea
 && \varpi^2\|\partial_t \ub_m^\delta\|^2_{L^2(0,T;(\widehat{\mathbf{H}}_{m,\mathrm{div}})^\prime)}\non\\
&\leq&
C\int_0^T
 \left( \xi^2 \| \nabla \widehat{\ub}_m^\delta\|_{\mathbf{L}^2(\Omega_m)}^2
 +\|\widehat{\ub}_m^\delta\|_{\mathbf{L}^2(\Omega_m)}^2
 +\|\widehat{\theta}^\delta_m\|_{L^2(\Omega_m)}^2\right) dt\non\\
&\leq& C_T.\label{utm}
\eea
The constant $C_T$ in \eqref{ut} and \eqref{utm} is independent of $\delta$ and $\xi$ (recalling that $\xi\in (0,1)$). If the nonlinear term on the interface are involved, we notice that for any $\mathbf{v}_f\in L^{4} \big(0,T;\mathbf{H}_{f,\mathrm{div}})$, it holds
\begin{eqnarray}
 && \sup_{\|\mathbf{v}_f\|_{L^4(0,T; \mathbf{H}^{1}(\Omega_f))}\leq 1} \left|\int_0^T\!\int_{\Gamma_{i}} |\widehat{\ub}_f^\delta|^2 (\mathbf{v}_f\cdot \mathbf{n}_{i}) dSdt\right|\non\\
&\leq & \int_0^T \big\||\widehat{\ub}_f^\delta|^2\big\|_{\big(H_{00}^{\frac12}(\Gamma_i)\big)'}
\|\mathbf{v}_f\|_{\mathbf{H}_{00}^{\frac12}(\Gamma_i)}dt\non\\
&\leq & C \int_0^T \big\||\widehat{\ub}_f^\delta|^2\big\|_{L^\frac{4}{3}(\Gamma_i)}
\|\mathbf{v}_f\|_{\mathbf{H}^{1}(\Omega_f)} dt\non\\
&\leq & C \left(\int_0^T \|\widehat{\ub}_f^\delta\|^\frac83_{\mathbf{L}^\frac{8}{3}(\Gamma_i)} dt\right)^\frac34 \left(\int_0^T \|\mathbf{v}_f\|_{\mathbf{H}^{1}(\Omega_f)}^4 dt\right)^\frac14\non\\
&\leq &  C\left( \int_0^T \|\widehat{\ub}_f^\delta\|^\frac83_{\mathbf{H}^\frac{1}{4}(\Gamma_i)} dt\right)^\frac34\non\\
&\leq & C\left(\int_0^T \|\widehat{\ub}_f^\delta\|^\frac83_{\mathbf{H}^\frac{3}{4}(\Omega_f)} dt\right)^\frac34\non\\
&\leq & C\left(\int_0^T\|\widehat{\ub}_f^\delta\|_{\mathbf{H}^1(\Omega_f)}^2 dt\right)^\frac34
\sup_{0\leq t\leq T}\|\widehat{\ub}_f^\delta(t)\|_{\mathbf{L}^2(\Omega_f)}^\frac{1}{2}\non\\
&\leq & C.\non
\end{eqnarray}
Then in equation \eqref{weak1ra}, taking test functions $\mathbf{v}_f\in L^{4} (0,T; \mathbf{H}_{f,\mathrm{div}})$, $\mathbf{v}_m\in L^2(0,T;\mathbf{H}_{m,\mathrm{div}})$ with $\mathbf{v}_f\cdot\mathbf{n}_i=\mathbf{v}_m\cdot \mathbf{n}_i$ on $\Gamma_i$, we infer from the above estimates that
\begin{eqnarray}
\left|\int_0^T\!\int_{\Omega_f}\partial_t \ub_f^\delta \cdot \mathbf{v}_f dxdt + \int_0^T\!\int_{\Omega_m}\partial_t \ub_m^\delta \cdot \mathbf{v}_m dxdt \right| \leq C_T,\label{utd}
\end{eqnarray}
which also implies $\partial_t \ub_f^\delta\in L^\frac43(0,T; (\mathbf{H}_{f,\mathrm{div}})')$ and $\partial_t \ub_m^\delta \in L^2(0,T;(\mathbf{H}_{m,\mathrm{div}})')$.

 From the estimates \eqref{ut}, \eqref{utm} on  time derivatives, we can conclude the weak continuity property that $\ub_f^\xi\in C_w([0,T]; \widetilde{\mathbf{H}}_{f,\mathrm{div}})$, $\ub_m^\xi\in C_w([0,T]; \widetilde{\mathbf{H}}_{m,\mathrm{div}})$ and thus the initial conditions $\ub_f^\xi|_{t=0}= \ub_{0f}$,  $\ub_m^\xi|_{t=0}= \ub_{0m}$ are fulfilled.
Besides, parallel to the arguments for  $\theta^\delta$, $\widehat{\theta}^\delta$, we obtain the strong convergence as $\delta \rightarrow 0$ (up to a subsequence) such that
\begin{align}
\ub_f^\delta \to \ub_f^\xi, \quad &\text{strongly in}\ \ L^2(0,T; \mathbf{H}^{1-\beta}(\Omega_f))\cap C([0,T]; \mathbf{H}^{-\beta}(\Omega_f)), \label{vvf0}\\
 \widehat{\ub}_f^\delta-\ub_f^\delta \to \mathbf{0}, \quad &\text{strongly in}\ \ L^\frac{4}{3}(0,T; (\widehat{\mathbf{H}}_{f,\mathrm{div}})^\prime), \label{vvf1} \\
 \widehat{\ub}_f^\delta \to \ub_f^\xi, \quad &\text{strongly in}\ \ L^\frac{32}{17}(0,T; \mathbf{H}^\frac34(\Omega_f))\cap L^q(0,T; \mathbf{L}^2(\Omega_f)), \label{vvf2}\\
 \ub_m^\delta \to \ub_m^\xi, \quad &\text{strongly in}\ \ L^2(0,T; \mathbf{H}^{1-\beta}(\Omega_m))\cap C([0,T]; \mathbf{H}^{-\beta}(\Omega_m)), \label{vvm0}\\
 \widehat{\ub}_m^\delta-\ub_m^\delta \to \mathbf{0}, \quad &\text{strongly in}\ \ L^2(0,T; (\widehat{\mathbf{H}}_{m,\mathrm{div}})^\prime), \label{vvm1} \\
 \widehat{\ub}_m^\delta\to \ub_m^\xi, \quad &\text{strongly in}\ \ L^\frac{32}{17}(0,T; \mathbf{H}^\frac34(\Omega_m)) \cap L^q(0,T; \mathbf{L}^2(\Omega_m)), \label{vvm2}
\end{align}
for some $\beta\in (0,\frac12)$ and any $q\geq 2$. Hence, we can further deduce the strong convergence of nonlinear terms
\begin{eqnarray}
& &\int_0^T \|\widehat{\ub}_f^\delta\otimes \widehat{\ub}_f^\delta -
\ub_f^\xi\otimes \ub_f^\xi\|_{\mathbf{L}^{\frac43}(\Omega_f)}^\frac{32}{17} dt\non\\
&\leq &\int_0^T \|\widehat{\ub}_f^\delta-\ub_f^\xi\|_{\mathbf{L}^{4}(\Omega_f)}^\frac{32}{17}
\|\widehat{\ub}_f^\delta+\ub_f^\xi\|_{\mathbf{L}^{2}(\Omega_f)}^\frac{32}{17} dt\non\\
& \leq & \sup_{0\leq t\leq T}\|\widehat{\ub}_f^\delta(t)+\ub_f^\xi(t)\|_{\mathbf{L}^{2}(\Omega_f)}
^\frac{32}{17}\int_0^T \|\widehat{\ub}_f^\delta-\ub_f^\xi\|_{\mathbf{H}^{\frac34}(\Omega_f)}^\frac{32}{17} dt\non\\
& \to & 0\quad \text{as}\ \delta\to 0,\label{covua}
\end{eqnarray}
and
\begin{eqnarray}
& &\int_0^T
\big\||\widehat{\ub}_f^\delta|^2-|\ub_f^\xi|^2\big\|_{L^\frac{4}{3}(\Gamma_i)}^\frac{32}{29}dt\non\\
& \leq & \int_0^T \|\widehat{\ub}_f^\delta-\ub_f^\xi\|_{\mathbf{L}^\frac{8}{3}(\Gamma_i)}^\frac{32}{29}
\|\widehat{\ub}_f^\delta+\ub_f^\xi\|_{\mathbf{L}^\frac{8}{3}(\Gamma_i)}^\frac{32}{29}dt\non\\
& \leq & C\int_0^T \|\widehat{\ub}_f^\delta-\ub_f^\xi\|_{\mathbf{H}^\frac{1}{4}(\Gamma_i)}^\frac{32}{29}
\|\widehat{\ub}_f^\delta+\ub_f^\xi\|_{\mathbf{H}^\frac{1}{4}(\Gamma_i)}^\frac{32}{29}dt\non\\
& \leq & C\int_0^T \|\widehat{\ub}_f^\delta-\ub_f^\xi\|_{\mathbf{H}^\frac{3}{4}(\Omega_f)}^\frac{32}{29}
\|\widehat{\ub}_f^\delta+\ub_f^\xi\|_{\mathbf{H}^1(\Omega_f)}^\frac{24}{29}
\|\widehat{\ub}_f^\delta+\ub_f^\xi\|_{\mathbf{L}^2(\Omega_f)}^\frac{8}{29} dt\non\\
& \leq & C\left(\int_0^T \|\widehat{\ub}_f^\delta-\ub_f^\xi\|_{\mathbf{H}^\frac{3}{4}(\Omega_f)} ^\frac{32}{17}\right)^\frac{17}{29}
\left(\int_0^T \|\widehat{\ub}_f^\delta+\ub_f^\xi\|_{\mathbf{H}^1(\Omega_f)}^2dt\right)^\frac{12}{29}
\non\\
&& \quad \times \sup_{0\leq t\leq T}\|\widehat{\ub}_f^\delta(t)+\ub_f^\xi(t)\|_{\mathbf{L}^2(\Omega_f)}^\frac{8}{29}\non\\
& \to & 0 \quad \text{as}\ \delta\to 0.
\label{covub}
\end{eqnarray}

Based on the a.e. and strong convergence of $\widetilde{\theta}^\delta$, the assumptions (A1)--(A2) and the Sobolev embedding theorem, we see that
\begin{align*}
\nu(\widetilde{\theta}^\delta)\to \nu(\theta^\xi), \quad &\text{strongly in}\ \ L^2(0,T; L^4(\Omega)),\\
\nu(\widetilde{\theta}_m^\delta)\to \nu(\theta^\xi_m), \quad &\text{strongly in}\ \ L^2(0,T; L^\frac{8}{3}(\Gamma_i)),\\
\lambda(\widetilde{\theta}^\delta)\to \lambda(\theta^\xi),\quad & \text{strongly in}\ \ L^2(0,T; L^4(\Omega)).
\end{align*}
Then we deduce that
\begin{eqnarray*}
&&\left|\int_0^T\left(\nu(\widetilde{\theta}_f^\delta)\mathbb{D}(\widehat{\ub}_f^\delta),
 \mathbb{D}(\mathbf{v}_f)\right)_fdt
- \int_0^T\left(\nu(\theta_f^\xi)\mathbb{D}(\ub_f^\xi),
 \mathbb{D}(\mathbf{v}_f)\right)_fdt\right|
 \\
&=&\left|\int_0^T\left((\nu(\widetilde{\theta}_f^\delta)-\nu(\theta_f^\xi))\mathbb{D}(\widehat{\ub}_f^\delta),
 \mathbb{D}(\mathbf{v}_f)\right)_fdt\right|\\
 && 
 +\left|\int_0^T\left(\nu(\theta_f^\xi)(\mathbb{D}(\widehat{\ub}_f^\delta)-\mathbb{D}(\ub_f^\xi)),
 \mathbb{D}(\mathbf{v}_f)\right)_fdt\right|\\
 &\leq& \sup_{0\leq t\leq T}\|\mathbb{D}(\mathbf{v}_f(t))\|_{\mathbf{L}^\infty(\Omega_f)}
 \left(\int_0^T \|\nu(\widetilde{\theta}_f^\delta)-\nu(\theta_f^\xi)\|_{L^2(\Omega_f)}^2dt\right)^\frac12
 \left(\int_0^T \|\mathbb{D}(\widehat{\ub}_f^\delta)\|_{\mathbf{L}^2(\Omega_f)}^2dt\right)^\frac12\\
 && +\left|\int_0^T\left((\mathbb{D}(\widehat{\ub}_f^\delta)-\mathbb{D}(\ub_f^\xi)),
 \nu(\theta_f^\xi)\mathbb{D}(\mathbf{v}_f)\right)_fdt\right|\\
 &\to& 0\quad \text{as}\ \delta\to 0,
\end{eqnarray*}
for any $\mathbf{v}_f\in C([0,T]; \mathbf{H}_{f,\mathrm{div}}\cap \mathbf{W}^{1,\infty}(\Omega_f))$. In a similar manner, we get
\begin{align*}
\left| \int_0^T \left(\nu(\widetilde{\theta}_m^\delta)\mathbb{K}^{-1}\widehat{\mathbf{u}}_m^\delta,\mathbf{v}_m)\right)_m dt- \int_0^T \left(\nu(\theta_m^\xi)\mathbb{K}^{-1}\mathbf{u}_m^\xi,\mathbf{v}_m)\right)_m dt \right|
\to 0 \quad \text{as}\ \delta\to 0,
\end{align*}
for any $\mathbf{v}_m\in C([0,T]; \mathbf{H}_{m,\mathrm{div}}\cap \mathbf{L}^{\infty}(\Omega_m))$;
\begin{eqnarray*}
& & \left|\sum_{j=1}^{d-1}\int_0^T\!\int_{\Gamma_{i}}
\frac{\alpha\nu(\widetilde{\theta}_m^\delta)}{\sqrt{{\rm trace} (\mathbb{K})}}
(\widehat{\ub}_f^\delta \cdot\btau_j)(\mathbf{v}_f\cdot\btau_j)dSdt\right.\\
& &\quad \left. -\sum_{j=1}^{d-1}\int_0^T\!\int_{\Gamma_{i}}\frac{\alpha\nu(\theta_m^\xi)}{\sqrt{{\rm trace} (\mathbb{K})}}
(\ub_f^\xi \cdot\btau_j)(\mathbf{v}_f\cdot\btau_j)dSdt\right|\to 0 \quad \text{as}\ \delta\to 0,
\end{eqnarray*}
for any $\mathbf{v}_f\in C([0,T]; \mathbf{H}_{f,\mathrm{div}}\cap \mathbf{H}^2(\Omega_f))$; and
\begin{align*}
& \left| \int_0^T\big(\lambda(\widetilde{\theta}^\delta) \nabla \widehat{\theta}^\delta,\nabla\phi\big)dt
-
\int_0^T\big(\lambda(\theta^\xi) \nabla \theta^\xi,\nabla\phi\big)dt\right|
\to 0\quad \text{as}\ \delta\to 0,\\
& \left|\int_0^T \big(\widehat{\mathbf{u}}^\delta  \widehat{\theta}^\delta, \nabla\phi\big)dt- \int_0^T \big(\mathbf{u}^\xi  \theta^\xi, \nabla \phi\big)dt\right| \to 0\quad \text{as}\ \delta\to 0,
\end{align*}
for any $\phi \in C([0,T]; H^1_0(\Omega)\cap W^{1,\infty}(\Omega))$.

Using the above convergence results, we are able to pass to the limit as $\delta\to 0$ (up to a subsequence) in \eqref{weak1ra}--\eqref{weak3ra} to show that the triple $(\ub^\xi_f, \ub^\xi_m, \theta^\xi)$ is indeed a weak solution to the regularized system \eqref{weak1r}--\eqref{weak3r} on $[0,T]$. \medskip

\textbf{Step 2. Passage to the limit $\xi\to 0$}.
Next, we pass to the limit as $\xi\to 0$ in the weak form \eqref{weak1r}--\eqref{weak3r}. To this end, we show that $(\ub_f^\xi, \ub_m^\xi, \theta^\xi)$ fulfills some energy estimates uniform in $\xi$.
 It follows from the strong convergence results \eqref{vpt}, \eqref{vvf2} and \eqref{vvm2} that as $\delta \to 0$, for almost all $t\in (0, T)$, we have (up to a subsequence),
\bea
&& \widehat{\ub}_f^\delta(t)\to \ub_f^\xi(t), \quad \ \, \text{strongly in}\ \ \mathbf{L}^2(\Omega_f),\non\\
&& \widehat{\ub}_m^\delta(t)\to \ub_m^\xi(t), \quad \text{strongly in}\ \ \mathbf{L}^2(\Omega_m),\non\\
&& \widehat{\theta}^\delta(t) \to \theta^\xi(t),\ \quad\ \  \text{strongly in}\ \ L^2(\Omega),\non
\eea
which imply that
\be
\mathcal{E}_\sigma(\widehat{\ub}_f^{\delta}(t), \widehat{\ub}_m^{\delta}(t), \widehat{\theta}^{\delta}(t))\to \mathcal{E}_\sigma(\ub_f^\xi(t), \ub_m^\xi(t), \theta^\xi(t)),\quad \text{for a.a.} \ t\in (0, T).\non
\ee
By the estimate \eqref{ee1}, we have
\begin{align}
\mathcal{E}_\sigma(\ub_f^\xi(t), \ub_m^\xi(t), \theta^\xi(t))
&\leq \mathcal{E}_\sigma(\ub_{0f},\ub_{0m},  \theta_0),\label{ee1a}
\end{align}
for a.a. $t\in (0,T)$, and thanks to the lower semi-continuity of norms, we get
\begin{align}
\int_0^T \mathcal{D}_\sigma^\xi(\tau) d\tau\leq C \mathcal{E}_\sigma(\ub_{0f},\ub_{0m},  \theta_0),\label{ee2a}
\end{align}
where
\begin{align}
\mathcal{D}_\sigma^\xi &=2 \big(\mathbb{D}(\ub_f^{\xi}), \mathbb{D}(\ub_f^{\xi})\big)_f+ \big(\ub_m^{\xi}, \ub_m^{\xi}\big)_m +\xi\big(\nabla \ub_m^{\xi}, \nabla \ub_m^{\xi}\big)_m \non\\
 &\quad +\sum_{j=1}^{d-1}\int_{\Gamma_{i}}|\ub_f^{\xi}\cdot\btau_j|^2 dS +\sigma\int_\Omega |\nabla\theta^{\xi}|^2dx.\label{Dxi}
\end{align}
Hence, it follows from \eqref{ee1a}--\eqref{Dxi} that
\be
\begin{cases}
\mathbf{u}_f^\xi \rightarrow \mathbf{u}_f & \text{ weakly star in } L^\infty(0,T; \mathbf{L}^2(\Omega_f)),\\
&\text{ weakly  in } L^2(0,T; \mathbf{H}^1(\Omega_f)),\\
 \mathbf{u}_m^\xi \rightarrow  \mathbf{u}_m &\text{ weakly star in } L^\infty(0,T; \mathbf{L}^2(\Omega_m)),\\
 \xi \nabla \mathbf{u}_m^\xi \rightarrow \mathbf{0} &\text{ strongly  in } L^2(0,T; \mathbf{L}^2(\Omega_m)),\\
\theta^\xi \rightarrow \theta & \text{ weakly star in } L^\infty(0,T; L^2(\Omega)), \\
& \text{ weakly  in } L^2(0,T; H^1_0(\Omega)),
\end{cases}
\label{conwmua}
\ee
for certain functions $(\ub_f, \ub_m, \theta)$ satisfying
\begin{align*}
& \mathbf{u}_f \in L^\infty(0, T; \widetilde{\mathbf{H}}_{f,\mathrm{div}})\cap L^2(0, T; \mathbf{H}_{f,\mathrm{div}}), \\
& \ub_m \in L^\infty(0, T; \widetilde{\mathbf{H}}_{m,\mathrm{div}}),\\
& \theta \in L^\infty(0, T; L^2(\Omega))\cap L^2(0, T; H^1_0(\Omega)),
\end{align*}
and $\mathbf{u}_f\cdot \mathbf{n}_i=\mathbf{u}_m\cdot \mathbf{n}_i$ on $\Gamma_i$. Then by similar arguments like for \eqref{vpt1}, \eqref{ut} and \eqref{utm}, we can deduce that for any $\xi\in(0,1)$, the following estimates hold:
\bea
&& \|\partial_t \theta^\xi\|^{2}_{L^{2}(0,T;(W^{1,3}_0(\Omega))')}\non\\
& \leq& C\int_0^T \left(\|\nabla \theta^\xi\|_{\mathbf{L}^2(\Omega)}^2
      + \|\theta^\xi\|_{L^6(\Omega)}^2
      \|\ub^\xi\|_{\mathbf{L}^2(\Omega)}^2
      \right) dt\non\\
&\leq& C\int_0^T \|\nabla \theta ^\xi\|_{\mathbf{L}^2(\Omega)}^{2}dt
   + C \sup_{0\leq t\leq T} \|\ub^\xi(t)\|_{\mathbf{L}^2(\Omega)}^2
   \int_0^T  \|\theta^\xi\|_{H^1(\Omega)}^2 dt\non\\
&\leq& C_T,\label{vpt1a}
\eea
\bea
 && \|\partial_t \ub_f^\xi\|^{\frac{4}{3}}
 _{L^{\frac{4}{3}}(0,T;(\widehat{\mathbf{H}}_{f,\mathrm{div}})^\prime)}\non\\
&\leq&
C\int_0^T
 \left(\|\ub_f^\xi\|_{\mathbf{L}^3(\Omega_f)}^\frac43
 \|\nabla \ub_f^\xi\|_{\mathbf{L}^2(\Omega_f)}^\frac43+
 \| \ub_f^\xi\|_{\mathbf{H}^1(\Omega_f)}^\frac43
 \|\theta^\xi\|_{L^2(\Omega)}^\frac43
 \right) dt\non\\
&\leq& C_T,\label{uta}
\eea
\bea
 && \varpi^2\|\partial_t \ub_m^\xi\|^2_{L^2(0,T;(\widehat{\mathbf{H}}_{m,\mathrm{div}})^\prime)}
 \non\\
&\leq&
C\int_0^T
 \left[ \xi \big(\xi \| \nabla \widehat{\ub}_m^\xi\|_{\mathbf{L}^2(\Omega_m)}^2\big)
 +\|\ub_m^\xi\|_{\mathbf{L}^2(\Omega_m)}^2
 +\|\theta^\xi\|_{L^2(\Omega)}^2\right] dt\non\\
&\leq& C_T,\label{utma}
\eea
where the constant $C_T$ in the above estimates is independent of $\xi$.  Besides, similar to \eqref{utd}, we can see that  $\partial_t \ub_f^\xi\in L^\frac43(0,T; (\mathbf{H}_{f,\mathrm{div}})')$ and $\partial_t \ub_m^\xi \in L^2(0,T;(\mathbf{H}_{m,\mathrm{div}})')$ for all $\xi \in(0,1)$.

The estimates \eqref{vpt1a}--\eqref{utma} on time derivatives
imply the weak continuity in time such that $\theta\in C_w([0,T]; H^1_0(\Omega))$, $\ub_f\in C_w([0,T]; \widetilde{\mathbf{H}}_{f,\mathrm{div}})$, $\ub_m\in C_w([0,T]; \widetilde{\mathbf{H}}_{m,\mathrm{div}})$ and thus  $\theta|_{t=0}= \theta_0$,  $\ub_f|_{t=0}= \ub_{0f}$,  $\ub_m|_{t=0}= \ub_{0m}$. We also infer the strong convergence as $\xi \rightarrow 0$ (up to a subsequence) such that
\begin{align}
  \theta^\xi \to \theta,  \quad &\text{strongly in}\ L^2(0,T; H^{1-\beta}(\Omega))\cap C([0,T]; H^{-\beta}(\Omega)),\label{vvt2a}\\
 \ub_f^\xi \to \ub_f, \quad &\text{strongly in}\ \ L^2(0,T; \mathbf{H}^{1-\beta}(\Omega_f))\cap C([0,T]; \mathbf{H}^{-\beta}(\Omega_f)), \label{vvf2a}\\
  \ub_m^\xi \to \ub_m, \quad &\text{strongly in}\ \  C([0,T]; \mathbf{H}^{-\beta}(\Omega_m)), \label{vvm2a}
\end{align}
for some $\beta\in (0,\frac12)$. Hence, following exactly the same argument as for \eqref{covua}, \eqref{covub}, we can further deduce the strong convergence of nonlinear terms
\begin{align}
& \int_0^T \|\ub_f^\xi\otimes \ub_f^\xi -
\ub_f\otimes \ub_f\|_{\mathbf{L}^{\frac43}(\Omega_f)}^2 dt
\to 0\quad \text{as}\ \xi\to 0,\non
\end{align}
\begin{align}
& \int_0^T
\big\||\ub_f^\xi|^2-|\ub_f|^2\big\|_{L^\frac{4}{3}(\Gamma_i)}^\frac87dt
\to 0 \quad \text{as}\ \xi\to 0.\non
\end{align}
Besides, using assumptions (A1)--(A2) and \eqref{vvt2a}, we get
\begin{align*}
\nu(\theta^\xi)\to \nu(\theta), \quad &\text{strongly in}\ \ L^2(0,T; L^4(\Omega)),\\
\nu(\theta^\xi_m)\to \nu(\theta_m), \quad &\text{strongly in}\ \ L^2(0,T; L^\frac{8}{3}(\Gamma_i)),\\
\lambda(\theta^\xi)\to \lambda(\theta),\quad & \text{strongly in}\ \ L^2(0,T; L^4(\Omega)),
\end{align*}
which together with \eqref{vvt2a}--\eqref{vvm2a} yield
\begin{align*}
&\left|\int_0^T\left(\nu(\theta_f^\xi)\mathbb{D}(\ub_f^\xi),
 \mathbb{D}(\mathbf{v}_f)\right)_fdt
- \int_0^T\left(\nu(\theta_f)\mathbb{D}(\ub_f),
 \mathbb{D}(\mathbf{v}_f)\right)_fdt\right|
\to 0\quad \text{as}\ \xi\to 0,
\end{align*}
for any $\mathbf{v}_f\in C([0,T]; \mathbf{H}_{f,\mathrm{div}}\cap \mathbf{W}^{1,\infty}(\Omega_f))$;
\begin{align*}
&\left|\sum_{j=1}^{d-1}\int_0^T\!\int_{\Gamma_{i}}
\frac{\alpha\nu(\theta_m^\xi)}{\sqrt{{\rm trace} (\mathbb{K})}}
(\ub_f^\xi \cdot\btau_j)(\mathbf{v}_f\cdot\btau_j)dSdt\right.\\
&\quad \left.-\sum_{j=1}^{d-1}\int_0^T\!\int_{\Gamma_{i}}
\frac{\alpha\nu(\theta_m)}{\sqrt{{\rm trace} (\mathbb{K})}}
(\ub_f \cdot\btau_j)(\mathbf{v}_f\cdot\btau_j)dSdt\right|\ \to 0\quad \text{as}\ \xi\to 0,
\end{align*}
for any $\mathbf{v}_f\in C([0,T]; \mathbf{H}_{f,\mathrm{div}}\cap \mathbf{H}^2(\Omega_f))$;
\begin{align*}
&\left|\int_0^T \left(\nu(\theta_m^\xi)\mathbb{K}^{-1}\mathbf{u}_m^\xi,\mathbf{v}_m)\right)_m dt- \int_0^T \left(\nu(\theta_m)\mathbb{K}^{-1}\mathbf{u}_m,\mathbf{v}_m)\right)_m dt\right|\\
&\leq \left|\int_0^T \left((\nu(\theta_m^\xi)-\nu(\theta_m))\mathbb{K}^{-1}\mathbf{u}_m^\xi,\mathbf{v}_m)\right)_m dt\right|+\left| \int_0^T \left(\nu(\theta_m)\mathbb{K}^{-1}(\ub_m^\xi-\mathbf{u}_m),\mathbf{v}_m)\right)_m dt\right|\non\\
&\to 0\quad \text{as}\ \xi\to 0,
\end{align*}
for any $\mathbf{v}_m\in C([0,T]; \mathbf{H}_{m,\mathrm{div}}\cap \mathbf{L}^\infty(\Omega_m))$;
and
\begin{align*}
&\left|\int_0^T\left(\lambda(\theta^\xi) \nabla \theta^\xi,\nabla\phi\right)dt
-
\int_0^T\left(\lambda(\theta) \nabla \theta,\nabla\phi\right)dt\right|\to 0\quad \text{as}\ \xi\to 0,\\
& \left|\int_0^T \big(\mathbf{u}^\xi  \theta^\xi, \nabla\phi\big)dt- \int_0^T \big(\mathbf{u}  \theta, \nabla \phi\big)dt\right| \to 0\quad \text{as}\ \xi\to 0,
\end{align*}
for any $\phi \in C([0,T]; H^1_0(\Omega)\cap W^{1,\infty}(\Omega))$.

Then we are able to pass to the limit as $\xi\to 0$ (up to a subsequence) in the weak formulation  \eqref{weak1r}--\eqref{weak3r} to show that the limit triple $(\ub_f, \ub_m, \theta)$ is indeed a weak solution to the original system \eqref{weak1}--\eqref{weak3} on $[0,T]$, keeping in mind that by integration by parts, it holds
\bea
&&\int_0^T\left(\partial_t \ub_f ,\mathbf{v}_f\right)_f dt =- \int_0^T\left( \ub_f ,\partial_t \mathbf{v}_f\right)_f dt,\non\\
&&\int_0^T\left(\partial_t \ub_m ,\mathbf{v}_m\right)_m dt =- \int_0^T\left( \ub_m ,\partial_t \mathbf{v}_m\right)_m dt,\non\\
&& \int_0^T \left(\partial_t \theta,\phi\right)dt =-\int_0^T \left( \theta,\partial_t \phi\right)dt,\non
\eea
for sufficiently regular test functions that have compact support in $(0,T)$.
On the other hand, we see that for any $(\mathbf{v}_f,\mathbf{v}_m)\in L^{4} \big(0,T;\mathbf{H}_{f,\mathrm{div}}) \times L^2(0,T; \widetilde{\mathbf{H}}_{m,\mathrm{div}})$ with $\mathbf{v}_f\cdot \mathbf{n}_i=\mathbf{v}_m\cdot \mathbf{n}_i$ on $\Gamma_{i}$, it holds (cf.  \eqref{ut}--\eqref{utd})
\begin{eqnarray}
&& \left|\int_0^T(\partial_t\ub_f,\mathbf{v}_f)_f dt
 +\varpi\int_0^T(\partial_t \ub_m,\mathbf{v}_m)_m dt\right|\non\\
&\leq &
C\left[\int_0^T
 \left( \|\ub_f\|_{\mathbf{H}^1(\Omega_f)}^2
\|\ub_f\|_{\mathbf{L}^2(\Omega_f)}^\frac{2}{3} + \| \ub_f\|_{\mathbf{H}^1(\Omega_f)}^2
+ \|\theta_f\|_{L^2(\Omega_f)}^2 +1
 \right) dt\right]^\frac34\non\\
 &&\times \left(\int_0^T\|\mathbf{v}_f\|_{\mathbf{H}^1(\Omega)}^4dt\right)^\frac14 \non\\
 && + C\left[\int_0^T\left(\|\ub_m\|_{\mathbf{L}^2(\Omega_m)}^2
 +\|\theta_m\|_{L^2(\Omega_m)}^2\right)dt\right]^\frac12 \left(\int_0^T\|\mathbf{v}_m\|_{\mathbf{L}^2(\Omega)}^2dt\right)^\frac12\non\\
 &\leq& C,\non
\end{eqnarray}
which implies $\partial_t\ub_f \in L^{\frac43}(0,T; (\mathbf{H}_{f,\mathrm{div}})')$ and $ \partial_t \ub_m\in  L^2(0,T;(\widetilde{\mathbf{H}}_{m,\mathrm{div}})')$.

Finally, in view of the energy inequalities \eqref{eapp1c}, \eqref{eapp2c} and using the above convergence results we can further conclude that for almost all $t\in [0,T]$, the global weak solution satisfies
\begin{eqnarray}
&&\frac{1}{2}\|\ub_f(t)\|_{\mathbf{L}^2(\Omega_f)}^2
+
\frac{\varpi}{2}\|\ub_m(t)\|_{\mathbf{L}^2(\Omega_m)}^2
+2\int_0^t\big(\nu(\theta_f)\mathbb{D}(\ub_f),\mathbb{D}(\mathbf{u}_f)\big)_f d\tau\non\\
&&+\int_0^t \left( \nu(\theta_m)\mathbb{K}^{-1}\ub_m, \ub_m\right)_m d\tau
 +\sum_{j=1}^{d-1}\int_0^t\! \int_{\Gamma_{i}}\frac{\alpha \nu(\theta_m)}{\sqrt{{\rm trace} (\mathbb{K})}} |\ub_f\cdot\btau_j|^2 dS d\tau \non\\
&\leq &\frac{1}{2}\|\ub_{0f}\|_{\mathbf{L}^2(\Omega_f)}^2
+\frac{\varpi}{2}\|\ub_{0m}\|_{\mathbf{L}^2(\Omega_m)}^2 + \int_0^t (\theta_f \mathbf{k},  \mathbf{u}_f )_f + (\theta_m \mathbf{k},  \mathbf{u}_m )_m d\tau,
\label{eapp1d}
 \end{eqnarray}
and
\begin{align}
& \frac{1}{2}\|\theta(t)\|_{L^2(\Omega)}^2
 +\int_0^t\!  \int_\Omega \lambda(\theta)|\nabla\theta|^2 dx d\tau
\leq \frac{1}{2}\|\theta_0\|_{L^2(\Omega)}^2.
\label{eapp2d}
\end{align}

\subsubsection{Case $\varpi=0$}
For the case of $\varpi=0$, we note that only some weaker estimates are available for the velocity $\ub_m$ in the matrix part. Keeping this in mind, below we point out necessary modifications for the proof of global weak solutions.

First, for the discrete system \eqref{weak1r}--\eqref{weak3r}, comparing with \eqref{con1} and \eqref{conm1}, we now only have the estimate for
$\|\widehat{\mathbf{u}}_m^\delta\|_{L^2(0,T;\mathbf{H}^1(\Omega_m))}$, which implies
$$
\widehat{\mathbf{u}}_m^\delta \rightarrow  \mathbf{u}_m ^\xi \text{ weakly  in } L^2(0,T; \mathbf{H}^1(\Omega_m))\quad \text{as}\ \delta\to 0
$$
for some limit function $\ub_m^\xi \in L^2(0,T; \mathbf{H}_{m,\mathrm{div}})$. The sequential strong convergence of $\widehat{\mathbf{u}}_m^\delta$ is no longer available. Taking $\delta \to 0$ (while keeping $\xi>0$ fixed), we still have the uniform estimates for the limit triple $(\ub_f^\xi, \ub_m^\xi,\theta^\xi)$ that is a weak solution to the regularized system \eqref{weak1r}--\eqref{weak3r} with $\varpi=0$:
\begin{align}
\mathcal{E}_\sigma(\ub_f^\xi(t), \theta^\xi(t))
&\leq \mathcal{E}_\sigma(\ub_{0f},  \theta_0),\label{ee1b}
\end{align}
for a.a. $t\in (0,T)$ and
\begin{align}
\int_0^T \mathcal{D}_\sigma^\xi(\tau) d\tau\leq C \mathcal{E}_\sigma(\ub_{0f}, \theta_0),\label{ee2b}
\end{align}
where $ \mathcal{D}_\sigma^\xi$ is given by \eqref{Dxi}.
Then in \eqref{conwmua}, we now only have as $\xi\to 0$ (up to a subsequence)
\begin{align*}
&\mathbf{u}_m^\xi \rightarrow  \mathbf{u}_m \quad\ \text{ weakly in } L^2(0,T; \mathbf{L}^2(\Omega_m)),\\
&\xi \nabla \mathbf{u}_m^\xi \rightarrow \mathbf{0}\quad  \text{ strongly  in } L^2(0,T; \mathbf{L}^2(\Omega_m)),
\end{align*}
 for some $\mathbf{u}_m  \in L^2(0, T; \widetilde{\mathbf{H}}_{m,\mathrm{div}})$. Next, we see that under the current regularity of $\mathbf{u}_m^\xi$, it holds
\bea
&& \|\partial_t \theta^\xi\|^{\frac87}_{L^{\frac87}(0,T;(W^{1,4}_0(\Omega))')}\non\\
& \leq& C\int_0^T \left(\|\nabla \theta^\xi\|_{\mathbf{L}^2(\Omega)}^\frac87
      + \|\theta^\xi\|_{L^4(\Omega)}^\frac87
      \|\ub^\xi\|_{\mathbf{L}^2(\Omega)}^\frac87
      \right) dt\non\\
&\leq& C\int_0^T \left(\|\nabla \theta ^\xi\|_{\mathbf{L}^2(\Omega)}^{2}+1\right)dt\non\\
&&   + C \sup_{0\leq t\leq T} \|\theta^\xi(t)\|_{L^2(\Omega)}^\frac{2}{7}
   \int_0^T   \|\ub^\xi\|_{\mathbf{L}^2(\Omega)}^\frac87 \|\theta^\xi\|_{H^1(\Omega)}^\frac{6}{7} dt\non\\
   &\leq& C\int_0^T \left(\|\nabla \theta ^\xi\|_{\mathbf{L}^2(\Omega)}^{2} +1\right)dt\non\\
&&   + C \sup_{0\leq t\leq T} \|\theta^\xi(t)\|_{L^2(\Omega)}^\frac{2}{7}
   \left(\int_0^T   \|\ub^\xi\|_{\mathbf{L}^2(\Omega)}^2dt\right)^{\frac47} \left(\int_0^T\|\theta^\xi\|_{H^1(\Omega)}^2dt\right)^\frac{3}{7} \non\\
&\leq& C_T,\label{vpt1b}
\eea
where $C_T$ is independent of $\xi$.
Keeping these modifications in mind, we can pass to the limit as $\xi\to 0$ and conclude the existence of a weak solution to system \eqref{weak1}--\eqref{weak3} on $[0,T]$ by a similar argument for the case $\varpi >0$. Moreover, the energy inequalities \eqref{eapp1d}--\eqref{eapp2d} still hold (now with $\varpi=0$).

The proof of Theorem \ref{thmEx} is complete.
\hfill$\square$


\section{Weak-Strong Uniqueness}\setcounter{equation}{0}

In this section, we prove Theorem \ref{thmuniq} on the weak-strong uniqueness of solutions to problem  \eqref{uf1}--\eqref{IBCi2}.

Let $(\mathbf{u}_f, \mathbf{u}_m, \theta)$ be a weak solution to problem \eqref{uf1}--\eqref{IBCi2}. Then from the previous section, it satisfies the energy inequalities  \eqref{eapp1d}--\eqref{eapp2d} for $\varpi\geq 0$. On the other hand, the regular solution $(\bar{\ub}_f, \bar{\ub}_m, \bar{\theta})$ that assumed to exist on $[0,T]$ is allowed to be used as a test function in its weak formulation (i.e., \eqref{weak1}--\eqref{weak3}). By a direct computation,  we obtain the following energy equalities for $(\bar{\ub}_f, \bar{\ub}_m, \bar{\theta})$:
\begin{eqnarray}
&&\frac{1}{2}\|\bar{\ub}_f(t)\|_{\mathbf{L}^2(\Omega_f)}^2
+
\frac{\varpi}{2}\|\bar{\ub}_m(t)\|_{\mathbf{L}^2(\Omega_m)}^2
+2\int_0^t\left(\nu(\bar{\theta}_f)\mathbb{D}(\bar{\ub}_f),\mathbb{D}(\bar{\mathbf{u}}_f)\right)_f d\tau\non\\
&&+\int_0^t \left( \nu(\bar{\theta}_m)\mathbb{K}^{-1}\bar{\ub}_m, \bar{\ub}_m\right)_m d\tau
 +\sum_{j=1}^{d-1} \int_0^t\!\int_{\Gamma_{i}}\frac{\alpha \nu(\bar{\theta}_m)}{\sqrt{{\rm trace} (\mathbb{K})}} |\bar{\ub}_f\cdot\btau_j|^2 dS d\tau \non\\
&=&\frac{1}{2}\|\ub_{0f}\|_{\mathbf{L}^2(\Omega_f)}^2
+\frac{\varpi}{2}\|\ub_{0m}\|_{\mathbf{L}^2(\Omega_m)}^2 +\int_0^t (\bar{\theta}_f \mathbf{k},  \bar{\mathbf{u}}_f )_f + (\bar{\theta}_m \mathbf{k},  \bar{\mathbf{u}}_m )_m d\tau,
\label{eapp1e}
 \end{eqnarray}
and
\begin{align}
& \frac{1}{2}\|\bar{\theta}(t)\|_{L^2(\Omega)}^2
 +\int_0^t\!  \int_\Omega \lambda(\bar{\theta})|\nabla\bar{\theta}|^2 dx d\tau
= \frac{1}{2}\|\theta_0\|_{L^2(\Omega)}^2.
\label{eapp2e}
\end{align}
Next, in the weak formulation \eqref{weak1} for $(\mathbf{u}_f, \mathbf{u}_m)$, we take the test function  $(\mathbf{v}_f,\mathbf{v}_m)=(-\bar{\mathbf{u}}_f,-\bar{\mathbf{u}}_m)$ and perform integration by parts to get
\begin{eqnarray}
 &&
 -\big(\mathbf{u}_f(t),\bar{\mathbf{u}(t)}_f\big)_f -\varpi\big(\mathbf{u}_m(t),\bar{\mathbf{u}}_m(t)\big)_m
 -2\int_0^t\big(\nu(\theta_f)\mathbb{D}(\ub_f),\mathbb{D}(\bar{\mathbf{u}}_f)\big)_fd\tau
 \non\\
&& - \int_0^t \left(\nu(\theta_m)\mathbb{K}^{-1}\mathbf{u}_m,\bar{\mathbf{u}}_m\right)_m d\tau - \sum_{j=1}^{d-1} \int_0^t\!\int_{\Gamma_{i}}\frac{\alpha \nu(\theta_m)}{\sqrt{{\rm trace} (\mathbb{K})}} (\ub_f\cdot\btau_j)(\bar{\mathbf{u}}_f\cdot\btau_j)dSd\tau\non\\
&=& -\|\mathbf{u}_{0f}\|_{\mathbf{L}^2(\Omega_f)}^2
 -\int_0^t(\ub_f,\partial_t\bar{\mathbf{u}}_f)_f d\tau
 -\varpi\|\mathbf{u}_{0m}\|_{\mathbf{L}^2(\Omega_m)}^2 - \varpi\int_0^t(\ub_m,\partial_t\bar{\mathbf{u}}_m)_m d\tau\non\\
&& +\int_0^t (\mathrm{div}(\ub_f\otimes \ub_f), \bar{\mathbf{u}}_f)_fd\tau - \int_0^t\! \int_{\Gamma_{i}} \frac12|\ub_f|^2 (\bar{\mathbf{u}}_f\cdot \mathbf{n}_{i}) dSd\tau
\non\\
&& -
\int_0^t (\theta_f \mathbf{k},  \bar{\mathbf{u}}_f )_f d\tau -
\int_0^t (\theta_m \mathbf{k},  \bar{\mathbf{u}}_m )_m d\tau\non\\
&=& -\|\mathbf{u}_{0f}\|_{\mathbf{L}^2(\Omega_f)}^2
    -\varpi\|\mathbf{u}_{0m}\|_{\mathbf{L}^2(\Omega_m)}^2
    +\int_0^t (\mathrm{div}(\ub_f\otimes \ub_f), \bar{\mathbf{u}}_f)_fd\tau
    \non\\
&&  + \int_0^t (\mathbf{u}_f, \mathrm{div}(\bar{\ub}_f\otimes \bar{\ub}_f))_fd\tau +2\int_0^t(\nu(\bar{\theta}_f)\mathbb{D}(\ub_f),\mathbb{D}(\bar{\mathbf{u}}_f))_fd\tau
\non\\
&&- \int_0^t\! \int_{\Gamma_{i}} \frac12|\ub_f|^2 (\bar{\mathbf{u}}_f\cdot \mathbf{n}_{i}) dSd\tau - \int_0^t\! \int_{\Gamma_{i}} \frac12|\bar{\ub}_f|^2 (\mathbf{u}_f\cdot \mathbf{n}_{i}) dSd\tau
\non\\
&& + \sum_{j=1}^{d-1} \int_0^t\!\int_{\Gamma_{i}}\frac{\alpha \nu(\bar{\theta}_m)}{\sqrt{{\rm trace} (\mathbb{K})}} (\ub_f\cdot\btau_j)(\bar{\mathbf{u}}_f\cdot\btau_j)dSd\tau+ \int_0^t \left(\nu(\bar{\theta}_m)\mathbb{K}^{-1}\mathbf{u}_m,\bar{\mathbf{u}}_m)\right)_m d\tau\non\\
&& -
\int_0^t (\theta_f \mathbf{k},  \bar{\mathbf{u}}_f )_f d\tau
-
\int_0^t (\bar{\theta}_f \mathbf{k}, \mathbf{u}_f )_f d\tau
-
\int_0^t (\theta_m \mathbf{k},  \bar{\mathbf{u}}_m )_m d\tau\non\\
&& -
\int_0^t (\bar{\theta}_m \mathbf{k}, \mathbf{u}_m )_m d\tau.
\label{weak1z}
\end{eqnarray}
In the weak formulation \eqref{weak3} for $\theta$, we take the test function $\phi=-\bar{\theta}$ and obtain
\begin{eqnarray}
&&-(\theta(t), \bar{\theta}(t))
-\int_0^t(\lambda(\theta)\nabla \theta,\nabla\bar{\theta})d\tau \non\\
&=&-\|\theta_0\|_{L^2(\Omega)}^2
-\int_0^t(\theta,\partial_t\bar{\theta})d\tau
-\int_0^t(\theta \ub, \nabla \bar{\theta})d\tau \non\\
&=&-\|\theta_0\|_{L^2(\Omega)}^2 +\int_0^t(\lambda(\bar{\theta})\nabla \theta,\nabla\bar{\theta})d\tau
-\int_0^t(\theta \ub, \nabla \bar{\theta})d\tau -\int_0^t(\bar{\theta} \bar{\ub}, \nabla \theta)d\tau.
\label{weak3z}
\end{eqnarray}
Summing up the relations \eqref{eapp1d}, \eqref{eapp1e} and \eqref{weak1z}, we obtain
\begin{eqnarray}
&& \frac12\|\mathbf{u}_f(t)-\bar{\mathbf{u}}_f(t)\|_{\mathbf{L}^2(\Omega_f)}^2+
\frac{\varpi}{2}\|\mathbf{u}_m(t)-\bar{\mathbf{u}}_m(t)\|_{\mathbf{L}^2(\Omega_m)}^2
 \non\\
 && +2\int_0^t\big(\nu(\theta_f)\mathbb{D}(\ub_f-\bar{\ub}_f),\mathbb{D}(\ub_f-\bar{\mathbf{u}}_f)\big)_f d\tau\non\\
  && +\int_0^t \big( \nu(\theta_m)\mathbb{K}^{-1}(\ub_m-\bar{\ub}_m), (\ub_m-\bar{\ub}_m)\big)_m d\tau\non\\
 && +\sum_{j=1}^{d-1} \int_0^t\!\int_{\Gamma_{i}}\frac{\alpha \nu(\theta_m)}{\sqrt{{\rm trace} (\mathbb{K})}} |(\ub_f-\bar{\ub}_f)\cdot\btau_j|^2 dS d\tau \non\\
 &\leq& I_1+I_2+I_3+I_4+I_5+I_6, \label{adiff1}
\end{eqnarray}
where
\begin{eqnarray}
I_1 &=& \int_0^t \big(\mathrm{div}(\ub_f\otimes \ub_f), \bar{\mathbf{u}}_f\big)_fd\tau + \int_0^t \big(\mathbf{u}_f, \mathrm{div}(\bar{\ub}_f\otimes \bar{\ub}_f)\big)_fd\tau,\non\\
I_2&=& - \int_0^t\! \int_{\Gamma_{i}} \frac12|\ub_f|^2 (\bar{\mathbf{u}}_f\cdot \mathbf{n}_{i}) dSd\tau - \int_0^t\! \int_{\Gamma_{i}} \frac12|\bar{\ub}_f|^2 (\mathbf{u}_f\cdot \mathbf{n}_{i}) dSd\tau,\non\\
I_3 &=& \int_0^t \big((\theta_f-\bar{\theta}_f) \mathbf{k},  \ub_f-\bar{\mathbf{u}}_f \big)_fd\tau +\int_0^t \big(\theta_m-\bar{\theta}_m) \mathbf{k}, \ub_m- \bar{\mathbf{u}}_m \big)_m d\tau,\non\\
I_4 &=& -2\int_0^t\Big((\nu(\theta_f)-\nu(\bar{\theta}_f)) (\mathbb{D}(\ub_f)-\mathbb{D}(\bar{\ub}_f)), \mathbb{D}(\bar{\mathbf{u}}_f)\Big)_fd\tau,\non \\
I_5 &=& - \int_0^t \Big((\nu(\theta_m)-\nu(\bar{\theta}_m)) \mathbb{K}^{-1}(\mathbf{u}_m-\bar{\mathbf{u}}_m),\bar{\mathbf{u}}_m\Big)_m d\tau,\non\\
I_6 &=& - \sum_{j=1}^{d-1} \int_0^t\!\int_{\Gamma_{i}}\frac{\alpha (\nu(\theta_m)- \nu(\bar{\theta}_m)) }{\sqrt{{\rm trace} (\mathbb{K})}} \big((\ub_f-\bar{\mathbf{u}}_f)\cdot\btau_j\big) (\bar{\mathbf{u}}_f\cdot\btau_j)dSd\tau.\non
\end{eqnarray}
Using the facts $\mathrm{div}\ub_f=\mathrm{div}\bar{\ub}_f=0$ and integration by parts, we have
\begin{eqnarray}
&&  \big(\mathrm{div}(\ub_f\otimes \ub_f), \bar{\mathbf{u}}_f\big)_f + \big(\mathbf{u}_f, \mathrm{div}(\bar{\ub}_f\otimes \bar{\ub}_f)\big)_f\non\\
&=&  \int_{\Omega_f} (\ub_f \cdot \nabla \ub_f)\cdot   \bar{\ub}_f dx +\left[- \int_{\Omega_f} (\bar{\ub}_f\otimes \bar{\ub}_f):\nabla \ub_f dx +\int_{\Gamma_i}(\bar{\ub}_f\otimes \bar{\ub}_f)\mathbf{n}_i\cdot \ub_f dS \right]\non\\
&& +\left[ - \int_{\Omega_f} \big[ (\ub_f-\bar{\ub}_f)\cdot \nabla \bar{\ub}_f\big]\cdot \bar{\ub}_f dx+\frac12\int_{\Gamma_i}\big[(\ub_f-\bar{\ub}_f)\cdot \mathbf{n}_i\big]|\bar{\ub}_f|^2dS\right]\non\\
&=& \int_{\Omega_f} \big[(\ub_f-\bar{\ub}_f)\cdot \nabla (\ub_f-\bar{\ub}_f) \big]\cdot \bar{\ub}_f dx +\int_{\Gamma_i}(\bar{\ub}_f\cdot  \ub_f) (\bar{\ub}_f\cdot \mathbf{n}_i) dS\non\\
&& +\frac12\int_{\Gamma_i}\big[(\ub_f-\bar{\ub}_f)\cdot \mathbf{n}_i\big]|\bar{\ub}_f|^2dS.\non
\end{eqnarray}
Then we can deduce that
\begin{eqnarray}
I_1+I_2 &=& \int_0^t\!\int_{\Omega_f} \big[(\ub_f-\bar{\ub}_f)\cdot \nabla (\ub_f-\bar{\ub}_f)\big]\cdot  \bar{\ub}_f dxd\tau \non\\
&& - \frac12 \int_0^t\!\int_{\Gamma_i} |\ub_f-\bar{\ub}_f|^2(\bar{\ub}_f\cdot\mathbf{n}_i) dSd\tau\non\\
&\leq & \int_0^t \|\ub_f-\bar{\ub}_f\|_{\mathbf{L}^3(\Omega_f)} \|\bar{\ub}_f\|_{\mathbf{L}^6(\Omega_f)} \|\nabla (\ub_f-\bar{\ub}_f)\|_{\mathbf{L}^2(\Omega_f)}  d\tau\non\\
&& +\frac12\int_0^t\| \ub_f-\bar{\ub}_f\|_{\mathbf{L}^\frac{8}{3}(\Gamma_i)}^2\|\bar{\ub}_f\|_{\mathbf{L}^4(\Gamma_i)}d\tau \non\\
&\leq & \frac{\epsilon}{2}\int_0^t\|\ub_f-\bar{\ub}_f\|_{\mathbf{H}^1(\Omega_f)}^2d\tau +\frac{C}{\epsilon}\int_0^t\|\bar{\ub}_f\|_{\mathbf{H}^1(\Omega_f)}^4 \|\ub_f-\bar{\ub}_f\|_{\mathbf{L}^2(\Omega_f)}^2d\tau \non\\
&& +C \int_0^t\| \ub_f-\bar{\ub}_f\|_{\mathbf{H}^\frac{1}{4}(\Gamma_i)}^2 \|\bar{\ub}_f\|_{\mathbf{H}^\frac12 (\Gamma_i)}d\tau \non\\
&\leq & \frac{\epsilon}{2}\int_0^t\|\ub_f-\bar{\ub}_f\|_{\mathbf{H}^1(\Omega_f)}^2d\tau +\frac{C}{\epsilon}\int_0^t\|\bar{\ub}_f\|_{\mathbf{H}^1(\Omega_f)}^4 \|\ub_f-\bar{\ub}_f\|_{\mathbf{L}^2(\Omega_f)}^2d\tau \non\\
&& + C \int_0^t\| \ub_f-\bar{\ub}_f\|_{\mathbf{H}^\frac{3}{4}(\Omega_f)}^2\|\bar{\ub}_f\|_{\mathbf{H}^1(\Omega_f)}d\tau\non\\
&\leq& \epsilon \int_0^t\|\ub_f-\bar{\ub}_f\|_{\mathbf{H}^1(\Omega_f)}^2 d\tau + \frac{C}{\epsilon} \int_0^t \|\bar{\ub}_f\|_{\mathbf{H}^1(\Omega_f)}^4 \|\ub_f-\bar{\ub}_f\|_{\mathbf{L}^2(\Omega_f)}^2d\tau,
\label{I1I2}
\end{eqnarray}
where $\epsilon>0$ is a small constant to be chosen later.
Concerning $I_3$, by Lemma \ref{equinorml}, we have
\begin{eqnarray}
I_3&\leq& \int_0^t \|\theta_f-\bar{\theta}_f\|_{L^2(\Omega_f)}\|  \ub_f-\bar{\mathbf{u}}_f\|_{\mathbf{L}^2(\Omega_f)}+ \|\theta_m-\bar{\theta}_m\|_{L^2(\Omega_m)}\| \ub_m- \bar{\mathbf{u}}_m \|_{\mathbf{L}^2(\Omega_m)} d\tau\non\\
&\leq & \frac{\epsilon}{2}\int_0^t \left(
\|  \ub_f-\bar{\mathbf{u}}_f\|_{\mathbf{L}^2(\Omega_f)} ^2  +
 \| \ub_m- \bar{\mathbf{u}}_m \|_{\mathbf{L}^2(\Omega_m)}^2 \right) d\tau  \non\\
&& +\frac{1}{2\epsilon} \int_0^t \left( \|\theta_f-\bar{\theta}_f\|_{L^2(\Omega_f)}^2+ \|\theta_m-\bar{\theta}_m\|_{L^2(\Omega_m)}^2\right) d\tau\non\\
&\leq& C\epsilon \int_0^t
\|  \ub_f-\bar{\mathbf{u}}_f\|_{\mathbf{Z}} ^2d\tau  +\frac{1}{2\epsilon} \int_0^t \|\theta-\bar{\theta}\|_{L^2(\Omega)}^2 d\tau.
\label{I3}
\end{eqnarray}
The terms $I_4,I_5,I_6$ are due to the temperature dependent viscosity and can be estimated as follows:
\begin{eqnarray}
I_4&\leq & C\int_0^t \|\nu(\theta_f)-\nu(\bar{\theta}_f)\|_{L^3(\Omega_f)} \|\mathbb{D}(\ub_f)-\mathbb{D}(\bar{\ub}_f)\|_{\mathbf{L}^2(\Omega_f)} \|\mathbb{D}(\bar{\mathbf{u}}_f)\|_{\mathbf{L}^6(\Omega_f)}d\tau \non \\
&\leq & C\int_0^t \|\theta_f-\bar{\theta}_f\|_{L^2(\Omega_f)}^\frac12 \|\nabla (\theta_f-\bar{\theta}_f)\|_{\mathbf{L}^2(\Omega_f)} ^\frac12 \|\mathbb{D}(\ub_f)-\mathbb{D}(\bar{\ub}_f)\|_{\mathbf{L}^2(\Omega_f)} \|\mathbb{D}(\bar{\mathbf{u}}_f)\|_{\mathbf{L}^6(\Omega_f)}d\tau\non\\
&\leq & \epsilon \int_0^t \|\mathbb{D}(\ub_f)-\mathbb{D}(\bar{\ub}_f)\|_{\mathbf{L}^2(\Omega_f)}^2 d\tau + \epsilon \int_0^t \|\nabla (\theta_f-\bar{\theta}_f)\|_{\mathbf{L}^2(\Omega_f)}^2 d\tau\non\\
&&  + \frac{C}{\epsilon^3} \int_0^t \|\mathbb{D}(\bar{\mathbf{u}}_f)\|_{\mathbf{L}^6(\Omega_f)}^4 \|\theta_f-\bar{\theta}_f\|_{L^2(\Omega_f)}^2 d\tau.
\label{I4}
\end{eqnarray}
In a similar manner, we get
\begin{eqnarray}
I_5 &\leq & C\int_0^t \|\nu(\theta_m)-\nu(\bar{\theta}_m)\|_{L^3(\Omega_m)} \|\mathbf{u}_m-\bar{\mathbf{u}}_m\|_{\mathbf{L}^2(\Omega_m)} \|\bar{\mathbf{u}}_m\|_{\mathbf{L}^6(\Omega)}d\tau \non\\
&\leq & \epsilon\int_0^t \|\mathbf{u}_m-\bar{\mathbf{u}}_m\|_{\mathbf{L}^2(\Omega_m)}^2d\tau + \epsilon \int_0^t \|\nabla (\theta_m-\bar{\theta}_m)\|_{\mathbf{L}^2(\Omega_m)}^2 d\tau\non\\
&&  + \frac{C}{\epsilon^3} \int_0^t \|\bar{\mathbf{u}}_m\|_{\mathbf{L}^6(\Omega_m)}^4 \|\theta_m-\bar{\theta}_m\|_{L^2(\Omega_m)}^2 d\tau.\label{I5}
\end{eqnarray}
By the trace theorem, the Gagliardo--Nirenberg inequality and Young's inequality, we deduce that
\begin{eqnarray}
I_6 &\leq & C\int_0^t \|\nu(\theta_m)- \nu(\bar{\theta}_m)\|_{L^\frac{8}{3}(\Gamma_i)} \|\ub_f-\bar{\mathbf{u}}_f\|_{\mathbf{L}^4(\Gamma_i)} \|\bar{\mathbf{u}}_f\|_{\mathbf{L}^\frac{8}{3}(\Gamma_i)} d\tau\non\\
&\leq& C \int_0^t \|\theta_m-\bar{\theta}_m\|_{H^\frac{3}{4}(\Omega_m)} \|\ub_f-\bar{\mathbf{u}}_f\|_{\mathbf{H}^1(\Omega_f)} \|\bar{\mathbf{u}}_f\|_{\mathbf{H}^\frac{3}{4}(\Omega_f)}d\tau \non\\
&\leq & \epsilon \int_0^t \|\ub_f-\bar{\mathbf{u}}_f\|_{\mathbf{H}^1(\Omega_f)}^2 d\tau + \epsilon\int_0^t \|\theta_m-\bar{\theta}_m\|_{H^1(\Omega_m)}^2d\tau \non\\
&& +\frac{C}{\epsilon^7}\sup_{0\leq t\leq T}\|\bar{\ub}_f\|_{\mathbf{L}^2(\Omega_f)}^5 \int_0^t \|\bar{\mathbf{u}}_f\|_{\mathbf{W}^{1,6}(\Omega_f)}^3 \|\theta_m-\bar{\theta}_m\|_{L^2(\Omega_m)}^2 d\tau.\label{I6}
\end{eqnarray}
Next, we sum the relations \eqref{eapp2d}, \eqref{eapp2e} and \eqref{weak3z} to get
\begin{eqnarray}
&& \frac{1}{2}\|\theta(t)-\bar{\theta}(t)\|_{L^2(\Omega)}^2
 +\int_0^t\!  \int_\Omega \lambda(\theta)|\nabla(\theta-\bar{\theta})|^2 dx d\tau
\leq  I_7+I_8,\label{adiff2}
\end{eqnarray}
where
\begin{eqnarray}
I_7&=& -\int_0^t \int_\Omega (\lambda(\theta)-\lambda(\bar{\theta}))\nabla (\theta-\bar{\theta})\cdot \nabla \bar{\theta}dx d\tau,\non\\
I_8&=& -\int_0^t(\theta \ub, \nabla \bar{\theta})d\tau -\int_0^t(\bar{\theta} \bar{\ub}, \nabla \theta)d\tau.\non
\end{eqnarray}
It follows from assumption (A2) that
\begin{eqnarray}
I_7&\leq& C\int_0^t\|\theta -\bar{\theta}\|_{L^4(\Omega)}\|\nabla (\theta-\bar{\theta})\|_{\mathbf{L}^2(\Omega)}\| \nabla \bar{\theta}\|_{\mathbf{L}^4(\Omega)}d\tau \non\\
&\leq & C\int_0^t\|\theta -\bar{\theta}\|_{L^2(\Omega)}^\frac14\|\nabla (\theta-\bar{\theta})\|_{\mathbf{L}^2(\Omega)}^\frac{7}{4} \| \nabla \bar{\theta}\|_{\mathbf{L}^4(\Omega)}d\tau \non\\
&\leq & \epsilon \int_0^t \|\nabla (\theta-\bar{\theta})\|_{\mathbf{L}^2(\Omega)}^2 d\tau + \frac{C}{\epsilon^7}\int_0^t \| \nabla \bar{\theta}\|_{\mathbf{L}^4(\Omega)}^8\|\theta -\bar{\theta}\|_{L^2(\Omega)}^2 d\tau. \label{I7}
\end{eqnarray}
Then using the incompressibility condition, integration by parts, the Gagliardo--Nirenberg inequality and Lemma \ref{equinorml}, we obtain
\begin{eqnarray}
I_8&=& -\int_0^t (\theta(\ub-\bar{\ub}),\nabla \bar{\theta})d\tau  - \int_0^t\big[(\bar{\theta} \bar{\ub},\nabla \theta)+(\theta\bar{\ub},\nabla \bar{\theta})\big]d\tau\non\\
&=& -\int_0^t \big((\theta-\bar{\theta})(\ub-\bar{\ub}),\nabla \bar{\theta}\big)d\tau
-\int_0^t \big( (\theta-\bar{\theta})\bar{\ub},\nabla(\theta-\bar{\theta})\big)d\tau \non\\
&=& -\int_0^t \big((\theta-\bar{\theta})(\ub-\bar{\ub}),\nabla \bar{\theta}\big)d\tau\non\\
&\leq& \int_0^t \| \ub- \bar{\mathbf{u}} \|_{\mathbf{L}^2(\Omega)} \|\theta -\bar{\theta}\|_{L^4(\Omega)}\|\nabla \bar{\theta}\|_{\mathbf{L}^4(\Omega)}d\tau \non\\
&\leq & \epsilon \int_0^t \| \ub- \bar{\mathbf{u}} \|_{\mathbf{Z}}^2 d\tau + \epsilon \int_0^t \|\nabla (\theta-\bar{\theta})\|_{\mathbf{L}^2(\Omega)}^2 d\tau\non\\
&& + \frac{C}{\epsilon^7}\int_0^t \|\nabla \bar{\theta}\|_{\mathbf{L}^4(\Omega)}^8\|\theta-\bar{\theta}\|_{L^2(\Omega)}^2 d\tau.\label{I8}
\end{eqnarray}

Collecting the above estimates for $I_1,...,I_8$ and taking the constant $\epsilon$ to be sufficiently small (cf. assumptions (A1)--(A3)), we deduce from \eqref{adiff1} and \eqref{adiff2} that
\begin{eqnarray}
&& \|\mathbf{u}_f(t)-\bar{\mathbf{u}}_f(t)\|_{\mathbf{L}^2(\Omega_f)}^2+
\varpi\|\mathbf{u}_m(t)-\bar{\mathbf{u}}_m(t)\|_{\mathbf{L}^2(\Omega_m)}^2
+\|\theta(t)-\bar{\theta}(t)\|_{L^2(\Omega)}^2
 \non\\
 && +2\int_0^t\underline{\nu}\|\mathbb{D}(\ub_f-\bar{\ub}_f)\|_{\mathbf{L}^2(\Omega_f)}^2 d\tau
 +\int_0^t \left( \underline{\nu}\bar{\kappa}^{-1}(\ub_m-\bar{\ub}_m), (\ub_m-\bar{\ub}_m)\right)_m d\tau\non\\
 && +\sum_{j=1}^{d-1} \int_0^t\!\int_{\Gamma_{i}}\alpha \underline{\nu}\bar{\kappa}^{-\frac12} |(\ub_f-\bar{\ub}_f)\cdot\btau_j|^2 dS d\tau
  +\int_0^t \underline{\lambda}\|\nabla(\theta-\bar{\theta})\|_{\mathbf{L}^2(\Omega)}^2 d\tau
 \non\\
 &\leq & C\int_0^t h(\tau)\left( \|\ub_f-\bar{\ub}_f\|_{\mathbf{L}^2(\Omega_f)}^2+ \|\theta-\bar{\theta}\|_{L^2(\Omega)}^2\right) d\tau,\non
\end{eqnarray}
where
$$
h(t)= \|\bar{\ub}_f(t)\|_{\mathbf{W}^{1,6}(\Omega_f)}^4 + \|\bar{\mathbf{u}}_m(t)\|_{\mathbf{L}^6(\Omega_m)}^4 + \|\nabla \bar{\theta}(t)\|_{\mathbf{L}^4(\Omega)}^8+1.
$$
By the additional regularity assumptions on $(\bar{\ub}_f,\bar{\ub}_m,\bar{\theta})$, i.e.,
$$\bar{\ub}_f\in L^4(0,T;\mathbf{W}^{1,6}(\Omega_f)),\quad  \bar{\ub}_m\in L^4(0,T;\mathbf{L}^6(\Omega_m)),\quad \bar{\theta}\in L^8(0,T;W^{1,4}(\Omega)),$$
we see that  $h(t)\in L^1(0,T)$. This enables us to apply Gronwall's lemma and Lemma \ref{equinorml} to conclude that
$$
\|\mathbf{u}_f(t)-\bar{\mathbf{u}}_f(t)\|_{\mathbf{L}^2(\Omega_f)}^2+
\|\theta(t)-\bar{\theta}(t)\|_{L^2(\Omega)}^2 +\int_0^t \|\mathbf{u}(\tau)-\bar{\mathbf{u}}(\tau)\|_{\mathbf{Z}}^2 d\tau =0,
$$
for a.a. $t\in [0,T]$. As a consequence, we obtain the weak-strong uniqueness result for problem \eqref{uf1}--\eqref{IBCi2} with $\varpi\geq 0$.

The proof of Theorem \ref{thmuniq} is complete. \hfill $\square$

\section*{Acknowledgments}
The first author was partially supported by NNSFC 11871159 and Guangdong Provincial Key Laboratory for Computational Science and Material Design
2019B030301001. The second author was partially supported by NNSFC 12071084 and the Shanghai Center for Mathematical Sciences.


\end{document}